\theoremstyle{plain}
\newtheorem{theorem}{Theorem}[section]
\newtheorem{lemma}[theorem]{Lemma}
\newtheorem{corollary}[theorem]{Corollary}
\theoremstyle{definition}
\newtheorem{definition}[theorem]{Definition}
\newtheorem{assumption}[theorem]{Assumption}
\newtheorem{problem}[theorem]{Problem}
\theoremstyle{remark}
\newtheorem{remark}[theorem]{Remark}
\icmltitlerunning{Distributionally Robust Infinite-Horizon Control}
\newcommand{\QQ}{\clf{Q}}
\newcommand{\PP}{\clf{P}}
\newcommand{\Htwo}{\mathit{H}_2}
\newcommand{\Hinf}{\mathit{H}_\infty}
\newcommand{\RO}{\mathrm{RO}}
\newcommand{\DRO}{\textrm{DR-RO}}
\newcommand{\Ss}{\mathcal{S}}
\renewcommand{\u}{\mathbf{u}} %control input
\newcommand{\w}{\mathbf{w}} %disturbance
\newcommand{\s}{\mathbf{s}}
\newcommand{\F}{\mathcal{F}} %transfer op from controller to output
\newcommand{\G}{\mathcal{G}} %transfer op from disturbance to output
\newcommand{\K}{\mathcal{K}} %controller
\newcommand{\T}{\mathcal{T}} %Closed-loop op. from disturbance to output and input
\newcommand{\NN}{\mathcal{N}}
\newcommand{\M}{\mathcal{M}} %covariance operator of the disturbance
\renewcommand{\L}{\mathcal{L}} %spectral factor of the covariance op. 
\newcommand{\I}{\mathcal{I}} %identity op.
\newcommand{\RR}{\mathcal{R}}
\newcommand{\W}{\mathscr{W}} 
\newcommand{\causal}{\mathscr{K}}
\newcommand{\trig}{\mathscr{T}}
\newcommand{\Reg}{\operatorname{\textsf{\small Reg}}}
\newcommand{\BW}{\operatorname{\mathsf{BW}}}
\newcommand{\Was}{\mathsf{W_2}}
\newcommand{\HS}{2} %Hilbert-Schmidt
\newcommand{\cost}{\operatorname{cost}}
\newcommand{\regret} {\operatorname{{\textsc{\small Regret}}}}
\newcommand{\op}{\infty} %operator norm
\newcommand{\ejw}{\e^{{j}\omega}}
\DeclareMathOperator*{\argmax}{arg\,max}
\newcommand{\subjto}{\,\mathrm{s.t.}\,}
\newcommand{\sub}{\mathscr{S}}
\newcommand{\defeq}{\coloneqq}                      %LHS defined as RHS
\newcommand{\inn}{\!\in\!}
\newcommand{\+}{\!+\!}
\renewcommand{\-}{\!-\!}
\renewcommand{\=}{\!=\!}
\renewcommand{\>}{\!>\!}
\renewcommand{\leqq}{\!\leq\!}
\renewcommand{\geqq}{\!\geq\!}
\newcommand{\timess}{\!\times\!}
\newcommand{\ie}{\textit{i.e.}}
\newcommand{\eps}{\varepsilon}          %epsilon
\newcommand{\N}{\mathbb{N}}     %natural numbers
\newcommand{\Z}{\mathbb{Z}}     %integers
\newcommand{\R}{\mathbb{R}}     %real numbers
\newcommand{\C}{\mathbb{C}}     %complex numbers
\newcommand{\Sym}{\mathbb{S}}   %set of symmetric real matrices
\newcommand{\Prob}{\mathscr{P}} %Set of probability measures
\newcommand{\TT}{\mathbb{T}}     %natural numbers
\newcommand{\pr}[1]{\left({#1}\right)}          %parenthesis
\newcommand{\br}[1]{\left[{#1}\right]}          %bracket
\newcommand{\cl}[1]{\left\{{#1}\right\}}        %curly bracket
\newcommand{\abs}[1]{\vert{#1}\vert}                    %absolute value
\newcommand{\Abs}[1]{\left\vert{#1}\right\vert}         %absolute value
\newcommand{\norm}[2][\text{}]{\Vert{#2}\Vert_{#1}}     %norm of #2 in [#1]
\newcommand{\Norm}[2][\text{}]{\left\Vert{#2}\right\Vert_{#1}} %norm of #2 in [#1]
\newcommand{\clf}[1]{\mathcal{#1}} %calligraphic
\newcommand{\tr}{\operatorname{tr}}         %trace
\newcommand{\Tr}{\operatorname{Tr}}         %trace
\newcommand{\tp}{\intercal}                 %transpose
\newcommand{\inv}{\mathrm{\-1}}             %inverse
\newcommand{\psdgeq}{\succcurlyeq} %positive semi-definite
\newcommand{\psdg}{\succ}          %positive definite
\newcommand{\E}{\operatorname{\mathbb{E}}} %expectation operator
\renewcommand{\Pr}{\operatorname{\mathbb{P}}} %probability function
\newcommand{\sampled}[1][\text{}]{\stackrel{#1}{\sim}}
\newcommand{\beq}[1]{\begin{align*}\label{eq:#1}}
\newcommand{\eeq}{\end{align*}}
\newcommand{\suml}{\sum\nolimits}
\newcommand{\e}{\mathrm{e}}             %e number
\newcommand{\half}{\frac{1}{2}} %one half
\newcommand{\xMapsto}[2][]{\ext@arrow 0599{\Mapstofill@}{#1}{#2}}
\def\Mapstofill@{\arrowfill@{\Mapstochar\Relbar}\Relbar\Rightarrow}
\begin{document}

\twocolumn[
\icmltitle{Infinite-Horizon Distributionally Robust Regret-Optimal Control}

% It is OKAY to include author information, even for blind
% submissions: the style file will automatically remove it for you
% unless you've provided the [accepted] option to the icml2024
% package.

% List of affiliations: The first argument should be a (short)
% identifier you will use later to specify author affiliations
% Academic affiliations should list Department, University, City, Region, Country
% Industry affiliations should list Company, City, Region, Country

% You can specify symbols, otherwise they are numbered in order.
% Ideally, you should not use this facility. Affiliations will be numbered
% in order of appearance and this is the preferred way.
\icmlsetsymbol{equal}{*}

\begin{icmlauthorlist}
\icmlauthor{Taylan Kargin}{equal,yyy}
\icmlauthor{Joudi Hajar}{equal,yyy}
\icmlauthor{Vikrant Malik}{equal,yyy}
\icmlauthor{Babak Hassibi}{yyy}
\end{icmlauthorlist}

\icmlaffiliation{yyy}{California Institute of Technology}

\icmlcorrespondingauthor{Taylan Kargin}{tkargin@caltech.edu}
%\icmlcorrespondingauthor{Firstname2 Lastname2}{first2.last2@www.uk}

% You may provide any keywords that you
% find helpful for describing your paper; these are used to populate
% the "keywords" metadata in the PDF but will not be shown in the document
\icmlkeywords{Wasserstein Distance, Distributionally Robust Control}

\vskip 0.3in
]

% this must go after the closing bracket ] following \twocolumn[ ...

% This command actually creates the footnote in the first column
% listing the affiliations and the copyright notice.
% The command takes one argument, which is text to display at the start of the footnote.
% The \icmlEqualContribution command is standard text for equal contribution.
% Remove it (just {}) if you do not need this facility.

%\printAffiliationsAndNotice{}  % leave blank if no need to mention equal contribution
\printAffiliationsAndNotice{\icmlEqualContribution} % otherwise use the standard text.

\begin{abstract}
We study the infinite-horizon distributionally robust (DR) control of linear systems with quadratic costs, where disturbances have unknown, possibly time-correlated distribution within a Wasserstein-2 ambiguity set. We aim to minimize the worst-case expected regret—the excess cost of a causal policy compared to a non-causal one with access to future disturbance. Though the optimal policy lacks a finite-order state-space realization (\textit{i.e.}, it is non-rational), it can be characterized by a finite-dimensional parameter. Leveraging this, we develop an efficient frequency-domain algorithm to compute this optimal control policy and present a convex optimization method to construct a near-optimal state-space controller that approximates the optimal non-rational controller in the $\mathit{H}_\infty$-norm. This approach avoids solving a computationally expensive semi-definite program (SDP) that scales with the time horizon in the finite-horizon setting.

\end{abstract}

\section{{Introduction }} \label{sec:intro}
Addressing uncertainty is a core challenge in decision-making. Control systems inherently encounter various uncertainties, such as external disturbances, measurement errors, model disparities, and temporal variations in dynamics \citep{grinten1968uncertainty,doyle1985structured}. Neglecting these uncertainties in policy design can result in considerable performance decline and may lead to unsafe and unintended behavior \citep{samuelson2017}.

Traditionally, the challenge of uncertainty in control systems has been predominantly approached through either stochastic or robust control frameworks \citep{kalman_new_1960,zames_feedback_1981,doyle_state-space_1988}. Stochastic control (e.g., Linear–Quadratic–Gaussian (LQG) or $H_2$-control) aims to minimize an expected cost, assuming disturbances follow a known probability distribution \citep{blackbook}. However, in practical scenarios, the true distribution is often estimated from sampled data, introducing vulnerability to inaccurate models. On the other hand, robust control minimizes the worst-case cost across potential disturbance realizations, such as those with bounded energy or power ($H_\infty$ control) \citep{zhou_robust_1996}. While this ensures robustness, it can be overly conservative. Two recent approaches have emerged to tackle this challenge.
\enlargethispage{\baselineskip}

\textbf{Regret-Optimal (RO) Control.} Introduced by \cite{sabag2021regret, goel2023regret}, this framework offers a promising strategy to tackle both stochastic and adversarial uncertainties. It defines regret as the performance difference between a causal control policy and a clairvoyant, non-causal one with perfect knowledge of future disturbances. In the full-information setting, RO controllers minimize the worst-case regret across all bounded energy disturbances \citep{sabag2021regret, goel2023regret}. The infinite-horizon RO controller also takes on a state-space form, making it conducive to efficient real-time computation \citep{sabag2021regret}.

Extensions of this framework have been investigated in various settings, including measurement-feedback control \citep{ROMF, hajar_regret-optimal_2023}, dynamic environments \citep{goel_regret-optimal_2021-1}, safety-critical control \citep{martin2022safe,didier2022system}, filtering \citep{sabag_regret-optimal_2022, goel2023regret}, and distributed control \citep{martinelli_closing_2023}. While these controllers effectively emulate the performance of non-causal controllers in worst-case disturbance scenarios, they may exhibit excessive conservatism when dealing with stochastic ones.

\textbf{Distributionally Robust (DR) Control.} In contrast to traditional approaches such as $H_2$ or $H_\infty$ and RO control that focus on a single distribution or worst-case disturbance realization, the DR framework addresses uncertainty in disturbances by considering ambiguity sets – sets of plausible probability distributions \citep{yang2020wasserstein,kim_distributional_2021, hakobyan2022wasserstein, tacskesen2023distributionally, aolaritei_wasserstein_2023,aolaritei_capture_2023,brouillon2023distributionally}. This methodology aims to design controllers with robust performance across all probability distributions within a given ambiguity set. The size of the ambiguity set provides control over the desired robustness against distributional uncertainty, ensuring that the resulting controller is not excessively conservative.

The controller's performance is highly sensitive to the chosen metric for quantifying distributional shifts. Common choices include the total variation (TV) distance \cite{tzortzis_dynamic_2014,tzortzis_robust_2016}, the  Kullback-Leibler (KL) divergence \cite{falconi_new_2022,liu_data-driven_2023}, and the Wasserstein-2 ($\Was$) distance \cite{DRORO,tacskesen2023distributionally,aolaritei_wasserstein_2023, hajar_wasserstein_2023, brouillon2023distributionally, kargin2023wasserstein}. The controllers derived from KL-ambiguity sets \cite{petersen_minimax_2000,falconi_new_2022} have been linked to the well-known risk-sensitive controller \cite{jacobson_optimal_1973, speyer_optimization_1974, whittle_risk-sensitive_1981}, which minimizes an exponential cost (see \cite{hansen_robustness_2008} and the references therein). However, distributions in a KL-ambiguity set are restricted to be absolutely continuous with respect to the nominal distribution \cite{hu_kullback-leibler_2012}, significantly limiting its expressiveness.

In contrast, $\Was$-distance, which quantifies the minimal cost of transporting mass between two probability distributions, induces a Riemannian structure on the space of distributions \cite{villani_optimal_2009} and allows for ambiguity sets containing distributions with both discrete and continuous support. Thanks to this versatility and the rich geometric framework, it has found widespread adoption across various fields, including machine learning \cite{arjovsky17wasserstein}, computer vision \cite{liu_semantic_2020,ozaydin_omh_2024},
estimation and filtering \cite{shafieezadeh_2018, lotidis_wasserstein_2023, prabhat_optimal_2024}, data compression \cite{blau2019rethinking,lei2021out,malik_distributionally_2024}, and robust optimization \cite{zhao_data-driven_2018,kuhn2019wasserstein,gao_distributionally_2022,blanchet_unifying_2023}. Moreover, the $\Was$-distance has emerged as a theoretically appealing statistical distance for DR linear-quadratic control problems \cite{DRORO} due to its compatibility with quadratic objectives and the resulting tractability of the associated optimization problems \cite{gao_distributionally_2022}.

\subsection{Contributions}
This paper explores the framework of Wasserstein-2 distributionally robust regret-optimal (\DRO) control of linear dynamical systems in the infinite-horizon setting. Initially introduced by \cite{DRORO} for the full-information setting, \DRO~control was later adapted to the partially observable case by \cite{hajar_wasserstein_2023}. Similarly, \cite{tacskesen2023distributionally} derived a DR controller for the partially observed linear-quadratic-Gaussian (LQG) problem, assuming time-independent disturbances. These prior works, focusing on the finite-horizon setting, are hampered by the requirement to solve a semi-definite program (SDP) whose complexity scales with the time horizon, prohibiting their applicability for large horizons.

Our work addresses this limitation by considering the infinite-horizon setting where the probability distribution of the disturbances over the entire time horizon is assumed to lie in a $\Was$-ball of a specified radius centered at a given nominal distribution. We seek a linear time-invariant (LTI) controller that minimizes the worst-case expected regret for distributions adversarially chosen within the $\Was$-ambiguity set. Our contributions are summarized as follows.

\textbf{1. Stabilizing time-invariant controller.} As opposed to the finite-horizon controllers derived in \citep{hakobyan2022wasserstein, tacskesen2023distributionally,aolaritei_capture_2023,DRORO,hajar_wasserstein_2023}, the controllers obtained in the infinite-horizon setting stabilize the underlying dynamics (\cref{thm:stabilizable})

\textbf{2. Robustness against non-iid disturbances.} In contrast to several prior works that assume time-independence of disturbances \citep{yang2020wasserstein,kim_distributional_2021, hakobyan2022wasserstein, tacskesen2023distributionally, zhong2023nonlinear,aolaritei_wasserstein_2023,aolaritei_capture_2023}, our approach does not impose such assumptions, thereby ensuring that the resulting controllers are robust against time-correlated disturbances. 
%\vspace{-4mm}

\textbf{3. Characterization of the optimal controller.} We cast the \DRO~control problem as a max-min optimization and derive the worst-case distribution and the optimal controller using KKT conditions (\cref{thm:kkt}). While the resulting controller is non-rational, lacking a finite-order state-space realization (\cref{thm:irrational}), we show it admits a finite-dimensional parametric form (\cref{thm:fixed_point}).

\textbf{4. Efficient computation of the optimal controller.} Utilizing the finite-dimensional parametrization, we propose an efficient algorithm based on the Frank-Wolfe method to compute the optimal non-rational \DRO~controller in the frequency-domain with arbitrary fidelity (\cref{alg:fixed_point_detailed}).

\textbf{5. Near-optimal state-space controller.} We introduce a novel convex program that finds the best rational approximation of any given order for the non-rational controller in the $\Hinf$-norm (\cref{thm:rational approx via feasibility}). Therefore, our approach enables efficient real-time implementation using a near-optimal state-space controller (\cref{lemma:state space K}).

\textbf{\textit{Notations:}} The letters $\N$, $\Z$, $\R$, and $\C$ denote the set of natural numbers, integers, real, and complex numbers, respectively. $\TT$ denotes the complex unit circle. For $z\inn\C$, $\abs{z}$ is its magnitude, and $z^\ast$ is the conjugate. $\Sym_+^n$ denotes the set of positive semidefinite (psd) matrices of size $n\timess n$.
Bare calligraphic letters ($\K$, $\M$, etc.) are reserved for operators. $\I$ is the identity operator with a suitable block size. For an operator $\M$, its adjoint is $\M^\ast$. For a matrix $A$, its transpose is $A^\tp$, and its Hermitian conjugate is $A^\ast$. For psd operators/matrices, $\psdgeq$ denotes the Loewner order. For a psd operator $\M$, both $\sqrt{\M}$ and $\M^{\half}$ denote the PSD square-root. $\{\M\}_{+}$ and $\{\M\}_{-}$ denote the causal and strictly anti-causal parts of an operator $\M$.
$M(z)$ denotes the z-domain transfer function of a Toeplitz operator $\M$.
$\tr(\cdot)$ denotes the trace of operators and matrices. $\norm{\cdot}$ is the usual Euclidean norm. $\norm[\op]{\cdot}$ and $\norm[\HS]{\cdot}$ are the $\Hinf$ operator) and $\Htwo$ (Frobenius) norms, respectively. 
Probability distributions are denoted by $\Pr$. $\Prob_p(\R^d)$ denotes the set of distributions with finite $p^\textrm{th}$ moment over a $\R^d$. $\E$ denotes the expectation. The Wasserstein-2 distance between distributions $\Pr_1,\Pr_2 \!\in\! \R^{d}$ is denoted by $\Was(\Pr_1,\Pr_2)$ such that
\vspace{-2mm}
\begin{equation}\label{eq:wasserstein}
%\vspace{-4mm}
    \Was(\Pr_1,\Pr_2) \triangleq \pr{ \inf\, \E\br{ \norm{\w_1 \- \w_2}^2} }^{1/2} ,
    %\vspace{-2mm}
\end{equation}
where the infimum is over all joint distributions of $(\w_1,\w_2)$ with marginals $\w_1 \!\sampled \!\Pr_1$ and $\w_2\! \sampled \!\Pr_2$.

\section{{Preliminaries} } \label{sec:prelim}

\subsection{Linear-Quadratic Control }
Consider a discrete-time, linear time-invariant (LTI) dynamical system expressed as a state-space model given by:
\vspace{-2mm}
\begin{equation}\label{eq:state_space}
\begin{aligned}
    x_{t+1} &= A x_{t} + B_u u_{t} + B_w w_{t}, \quad s_{t} = C x_{t}.
\end{aligned}
\vspace{-2mm}
\end{equation}
Here, $x_{t} \inn \R^{d_x}$ is the \emph{state}, $s_{t} \inn \R^{d_s}$ is the \emph{regulated output},  $u_t \inn \R^{d_u}$ is the \emph{control input}, and $w_{t} \inn \R^{d_w}$ is the \emph{exogenous disturbance} at time $t$. The state-space parameters $(A,B_u,B_w,C)$ are known with stabilizable $(A,B_u)$, controllable $(A,B_w)$, and observable $(A,C)$. The disturbances are generated from an unknown stochastic process.

We focus on the infinite-horizon setting, where the time index spans from the infinite past to the infinite future, taking values in $\Z$\footnote{The doubly-infinite horizon is chosen for simplicity in derivations, but the results are extendable to a semi-infinite horizon.}. Defining the doubly-infinite column vectors of regulated output $\s \!\defeq\! (s_t)_{t\in\Z}$, control input $\u \!\defeq\! (u_t)_{t\in\Z}$, and disturbance process $\w \!\defeq\! (w_t)_{t\in\Z}$ trajectories, we can express the temporal interactions between these variables globally by representing the dynamics \eqref{eq:state_space} as a \emph{causal linear input/output model}, described by:
\begin{equation} \label{eq: infinite horizon model}
%\vspace{-2mm}
    \s = \F \u + \G \w, %\vspace{-2mm}
\end{equation}
where $\F$ and $\G$ are \emph{strictly causal} (\ie, strictly lower-triangular) and doubly-infinite $d_s\timess d_u$ and $d_s\timess d_w$-block \emph{Toeplitz operators}, respectively. These operators describe the influence of the control input and disturbances on the regulated output through convolution with the \emph{impulse response} of the dynamical system \eqref{eq:state_space}, which are completely determined by the model parameters $(A,B_u,B_w,C)$.

\textit{\textbf{Control Policy.}} We restrict our attention to the full-information setting where the control input $u_t$ at time $t\in \Z$ has access to the past disturbances $(w_s)_{s=-\infty}^{t}$. In particular, we consider linear time-invariant (LTI) \emph{disturbance feedback control}\footnote{Youla parametrization enables the conversion between a DFC controller and a state-feedback controller \cite{youla_modern_1976}.} (DFC) policies that map the disturbances to the control input via a causal convolution sum:
\begin{equation}\label{eq:controller}%\vspace{-0mm}
    u_t = \suml_{s=-\infty}^{t} \widehat{K}_{t-s} w_s, \;\; \textrm{ for all } \;\; t\in\Z.
\end{equation}
The sequence $\{ \widehat{K}_{t}\}_{t=0}^{\infty}$ of $d_u\timess d_w$ matrices are known as the \emph{Markov parameters} of the controller. Similar to the causal linear model in \eqref{eq: infinite horizon model}, the controller equation in \eqref{eq:controller} can be expressed globally by $\u\=\K \w$, where $\K$ is a bounded, \emph{strictly causal}, $d_u\timess d_w$-block \emph{Toeplitz operator} with lower block-diagonal entries given by the Markov parameters. The set of causal DFC policies is denoted by $\causal$.

\textit{\textbf{Cost.}} At each time step, the control inputs and disturbances incur a quadratic instantaneous cost $s_t^\tp s_t \+ u_t^\tp R u_t$, where $ R\!\psdg \!0$. Without loss of generality, we take $R\=I$ by redefining $B_u R^{-\!\half} \!\rightarrow\! B_u$ and $R^{\half}u_{t}\! \rightarrow \! u_{t}$. By defining the truncated sequences $\s_T\!\defeq\!(s_t)_{t=0}^{T-1}$ and $\u_T\!\defeq\!(u_t)_{t=0}^{T-1}$ the \emph{cumulative cost} over a horizon of $T\inn\N$ is simply given by
\begin{equation}\label{eq:cost}%\vspace{-0mm}
    \cost_T(\u, \w) \defeq \norm{\s_T}^2 + \norm{\u_T}^2.
\end{equation}

\subsection{The Regret-Optimal Control Framework}
We aim to design controllers that reduce the regret against the best offline sequence of control inputs selected in hindsight. For a horizon $T$, the \emph{cumulative regret} is given by
\begin{equation}\label{eq:regret}
   % \vspace{-3mm}
    \regret_T(\u,\!{\w}) \!\defeq \!\cost_T(\u,\!\w) \- \min_{\u_T^\prime} \cost_T(\u^\prime, \!\w). 
    \vspace{-2mm}
\end{equation}
We highlight that the minimization on the right-hand side is among all control input sequences, including \emph{non-causal} (offline) ones. The regret-optimal (RO) control framework, introduced by \cite{sabag2021regret}, aims to craft a causal and time-invariant controller $\K\inn\causal$ that minimizes the steady-state worst-case regret across all \emph{bounded energy disturbances}. This can be formally cast as
\vspace{-2mm}
\begin{equation} \label{eq:regret_optimal}
     {\gamma}_{\RO} \!\defeq \! \inf_{\K \in \causal} { \limsup_{T\to \infty}  \frac{1}{T}   \sup_{ {\norm{\w_T}^2 \leq 1}} \regret_T(\K\w,\w)}.
\end{equation}
In the full-information setting, the best sequence of control inputs selected in hindsight is given by $\u_\circ = \K_\circ \w$ where
\begin{equation}  \label{eq:non-causal}
\K_\circ \defeq - (\I + \F^\ast \F)^\inv \F^\ast \G,
\end{equation}
is the optimal non-causal policy {\citep{blackbook}}. Since a non-causal controller lacks physical realization, the optimal RO controller, $\K_{\RO}$ represents the "best" causal policy, attaining performance levels akin to the optimal non-causal policy $\K_\circ$, which enjoys complete access to the disturbance trajectory in advance. 

Exploiting the time-invariance of dynamics in~\eqref{eq:state_space} and the controller $\K\in\causal$, \citet{sabag2021regret} demonstrates the equivalence of \eqref{eq:regret_optimal} to the following:
\begin{equation}
    \inf_{\K \in \causal} \sup_{\norm{\w}^2 \leq 1}\! \w^\ast \RR_\K \w \= \inf_{\K\in\causal} \norm[\op]{\RR_\K},
\end{equation}
where $\norm{\w}$ is the $\ell_2$-norm, and $\RR_\K$, which we call \emph{the regret operator}, is given as 
\begin{equation}  \label{eq:regret operator}
\RR_\K \defeq (\K - \K_\circ)^\ast (\I + \F^\ast \F) (\K - \K_\circ).
\end{equation}
The resulting controller closely mirrors the non-causal controller's performance under the worst-case disturbance sequence but may be conservative for stochastic disturbances.

\subsection{Distributionally Robust Regret-Optimal Control}
This paper investigates distributionally robust regret-optimal control, seeking to devise a causal controller minimizing the worst-case expected regret within a Wasserstein-2 ($\Was$) ambiguity set of disturbance probability distributions. The $\Was$-ambiguity set $\W_T(\Pr_\circ,r)$ for horizon $T$ is defined as a $\Was$-ball of radius of $r_T\>0$ centered at a nominal distribution $\Pr_{\circ,T} \!\in\! \Prob(\R^{T d_w})$, namely:
\begin{equation}\label{eq:wass ambiguity set} 
%\vspace{-2mm}
    \W_T(\Pr_\circ,r_T) \!\defeq\! \cl{\Pr\! \in\! \Prob(\R^{T d_w}) \!\mid \! \Was(\Pr,\, \Pr_\circ) \leqq r_T}.
\end{equation}
In contrast to \eqref{eq:regret_optimal}, which addresses the worst-case regret across all bounded energy disturbances, our focus is on the \emph{worst-case expected regret} across all distributions within the $\Was$-ambiguity set, as defined by \citet{DRORO}
\begin{equation*}\label{eq:worst case exp regret finite}
    \Reg_T(\K,r_T) \!\defeq\!\! \!\!\!\!\sup_{\Pr_T \in \W_T(\Pr_{\circ,\!T},r_T)}\!\! \!\!\E_{\Pr_T}\br{\regret_T(\K\w, {\w})},
\end{equation*}
where $\E_{\Pr_T}$ denotes the expectation such that ${\w_T} \!\sampled\! \Pr_T$. In the infinite-horizon case, this cumulative quantity diverges to infinity. Therefore, we focus on the \emph{steady-state worst-case expected regret}, as defined by \cite{kargin2023wasserstein}:
\begin{definition} \label{def:worst case regret}
The \emph{steady-state worst-case expected regret} suffered by a policy $\K\inn\causal$ is given by the ergodic limit of the cumulative worst-case expected regret, \ie,
\begin{equation}\label{eq:worst case exp regret infinite}
    \overline{\Reg}_{\infty}(\K,r) \defeq \limsup_{T\to \infty}  \frac{1}{T}  \, \Reg_T(\K,r_T).
\end{equation}
\end{definition}

\vspace{-3mm}
To ensure the limit in \eqref{eq:worst case exp regret infinite} is well-defined, the asymptotic behavior of the ambiguity set must be specified. For this purpose, we make the following assumption.
\begin{assumption}\label{asmp:nominal} 
    The nominal disturbance process $\w_\circ\!\defeq\!(w_{\circ,t})_{t\in\Z}$ forms a zero-mean weakly stationary random process with an auto-covariance operator $\M_\circ \!\defeq\!(\widehat{M}_{\circ,t-s})_{t,s\in\Z}$, \ie,  $\E_{\Pr_\circ}[w_{\circ,t} w_{\circ,s}^\tp]= \widehat{M}_{\circ,t-s}$. Moreover, the size of the ambiguity set for horizon $T$ scales as $r_T 
\!\sampled r \sqrt{T}\!$ for a $r\>0$.
\end{assumption}
The choice of $r_T \!\propto \!\sqrt{T}$ aligns with the fact that the $\Was$-distance between two random vectors of length $T$, each sampled from two different iid processes, scales proportionally to $\sqrt{T}$. %\tk{see appendix}.

While the limit \eqref{eq:worst case exp regret infinite} is well-defined under \cref{asmp:nominal}, it can still be infinite depending on the chosen controller $\K$. Notably, a finite value for \eqref{eq:worst case exp regret infinite} implies closed-loop stability. In \cref{prob:DR-RO}, we formally state the infinite-horizon Wasserstein-2 \DRO~problem.
\begin{problem}[\textbf{Distributionally Robust Regret-Optimal Control }]\label{prob:DR-RO} 
Find a causal LTI controller, $\K\inn\causal$, that minimizes the steady-state worst-case expected regret~\eqref{eq:worst case exp regret infinite}, \ie,
    % \vspace{-0mm}
    \begin{equation} \label{eq:DR-RO}\vspace{-2mm}
    %\inf_{\K \in \causal} { \lim_{T\to \infty}  \frac{1}{\abs{\II_T}}    \sup_{\Pr \in \W_T} \E_{\Pr}\br{\regret_T(\Kfin_T, {\wfin_T})}}.
    \inf_{\K \in \causal}   \overline{\Reg}_{\infty}(\K,r) \= \inf_{\K \in \causal}  \limsup_{T\to \infty}  \frac{1}{T}  \, \Reg_T(\K,r_T) .
\end{equation}
\end{problem}
In \cref{sec:theory}, we provide an equivalent max-min optimization formulation of \cref{prob:DR-RO}.

\section{A Saddle-Point Problem} \label{sec:theory}

This section presents a tractable convex reformulation of the infinite-horizon \DRO~problem. Concretely, \cref{thm:strong_duality} introduces an equivalent single-variable variational characterization of the steady-state worst-case expected regret \eqref{eq:worst case exp regret infinite} incurred by a fixed time-invariant controller. Exploiting this, we show in \cref{thm:kkt} that \cref{prob:DR-RO} reduces to a convex program over positive-definite operators via duality. Moreover, we characterize the optimal controller and the worst-case distribution via KKT conditions. All the proofs of the subsequent theorems are deferred to the Appendix.

Two major challenges are present in solving the \cref{prob:DR-RO}: ergodic limit in \eqref{eq:worst case exp regret infinite} and causality constraint in \eqref{eq:DR-RO}. Firstly, the ergodic limit definition of the worst-case expected regret for a fixed policy $\K\inn\causal$ requires successively solving optimization problems with ever-increasing dimensions. To address this challenge, we leverage the asymptotic convergence properties of Toeplitz matrices and derive an equivalent formulation of \eqref{eq:worst case exp regret infinite} as an optimization problem over a single decision variable as in \citet{kargin2023wasserstein}. Similar to the time-domain derivations of $\Htwo$ and risk-sensitive controllers in the infinite horizon, the resulting formulations involve the Toeplitz operators $\RR_\K$. This result is presented formally in the subsequent theorem.
\begin{theorem}[\textbf{A Variational Formula for }$\overline{\Reg}_{\infty}$\textbf{ \citep[Thm.5]{kargin2023wasserstein}}] \label{thm:strong_duality}
Under \cref{asmp:nominal}, the steady-state worst-case expected regret $\overline{\Reg}_{\infty}(\K,r)$ incurred by a causal policy $\K\inn\causal$ is equivalent to the following:
\begin{equation}\label{eq:dual_worst_case_regret} 
        \inf_{\substack{\gamma \geq 0,\; \gamma\I \psdg \RR_\K } }\gamma \tr\br{ ((\I-\gamma^\inv \RR_\K)^\inv- \I) \M_\circ}  + \gamma r^2.
\end{equation}
which takes a finite value whenever $\RR_\K$ is bounded. Additionally, the worst-case disturbance is obtained from $w_\star \defeq (\I-\gamma_\star^\inv \RR_\K)^\inv w_\circ$ where $\gamma_\star$ is the optimal solution of \eqref{eq:dual_worst_case_regret} satisfying $\tr\br{((\I-\gamma_\star^\inv \RR_\K)^\inv- \I)^2 \M_\circ} = r^2$.
\end{theorem}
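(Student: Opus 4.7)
The strategy is to pass through the finite horizon: first derive a one-parameter strong dual for the length-$T$ worst-case expected regret using the classical Wasserstein-2 duality for quadratic objectives, then take the ergodic limit via a Szeg\H{o}-type trace asymptotic for block-Toeplitz operators, and finally justify the interchange of $\inf_\gamma$ with the limit in $T$.

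\textbf{Finite-horizon strong duality.} For fixed $T$, the cumulative regret $\regret_T(\K\w,\w)=\w^\ast \RR_{\K,T}\w$ is a PSD quadratic form in $\w$, where $\RR_{\K,T}$ denotes the length-$T$ truncation of the Toeplitz operator \eqref{eq:regret operator}. The inner supremum
\begin{equation*}
\sup_{\Pr_T\in\W_T(\Pr_{\circ,T},r_T)}\E_{\Pr_T}\bigl[\w^\ast \RR_{\K,T}\w\bigr]
\end{equation*}
is therefore an instance of Wasserstein-2 DRO with quadratic cost, for which Gelbrich-type strong duality (as in Shafieezadeh-Abadeh--Nguyen--Kuhn or Gao--Kleywegt) yields
\begin{equation*}
\Reg_T(\K,r_T)=\!\!\inf_{\gamma\geq 0,\,\gamma\I\psdg \RR_{\K,T}}\!\!\Bigl\{\gamma r_T^2+\gamma\tr\!\Bigl[\bigl((\I-\gamma^{-1}\RR_{\K,T})^{-1}-\I\bigr)\M_{\circ,T}\Bigr]\Bigr\},
\end{equation*}
where $\M_{\circ,T}=\E_{\Pr_\circ}[\w_T\w_T^\ast]$ is the length-$T$ Toeplitz second-moment operator, finite under \cref{asmp:nominal}. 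The primal optimizer is the pushforward $\w_{\star,T}=(\I-\gamma_\star^{-1}\RR_{\K,T})^{-1}\w_{\circ,T}$, read off from the optimal Wasserstein coupling.

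\textbf{Ergodic limit and interchange.} Dividing by $T$ and using $r_T^2/T\to r^2$, the key step is a Szeg\H{o}/Avram--Parter-type formula: for bounded block-Toeplitz operators with sufficiently regular symbols on $\TT$,
\begin{equation*}
\tfrac{1}{T}\tr\!\bigl[f(\RR_{\K,T})\,\M_{\circ,T}\bigr]\;\xrightarrow{T\to\infty}\;\tfrac{1}{2\pi}\!\int_0^{2\pi}\!\tr\!\bigl[f(R_\K(e^{j\omega}))\,M_\circ(e^{j\omega})\bigr]\,d\omega,
\end{equation*}
which we identify with $\tr[f(\RR_\K)\M_\circ]$ in \eqref{eq:dual_worst_case_regret}, understood as a per-unit-length trace on the doubly-infinite Toeplitz operator. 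Applied with $f(x)=(1-\gamma^{-1}x)^{-1}-1$, which is bounded and analytic on the spectrum whenever $\gamma>\|\RR_\K\|_\op$, this gives pointwise-in-$\gamma$ convergence of the dual objectives. Convexity in $\gamma$, lower semicontinuity, and equicontinuity of the resolvent $(\I-\gamma^{-1}\RR_{\K,T})^{-1}$ on compact subcones of $\{\gamma:\gamma\I\psdg\RR_\K\}$ allow swapping $\limsup_T\tfrac{1}{T}$ with $\inf_\gamma$ via a standard epi-convergence argument, yielding \eqref{eq:dual_worst_case_regret}. The stationarity condition $\tr[((\I-\gamma_\star^{-1}\RR_\K)^{-1}-\I)^2\M_\circ]=r^2$ is the first-order optimality condition obtained by differentiating the dual in $\gamma$, and the worst-case disturbance identity is the limit of the finite-horizon pushforward.

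\textbf{Main obstacle.} The principal technical hurdle is the Szeg\H{o}-type asymptotic itself: single-Toeplitz Szeg\H{o} is classical, but here the trace is of a \emph{product} of two Toeplitz matrices, whose corner perturbations from truncation need not be trace-class negligible in general. Overcoming this amounts to showing that $f(\RR_{\K,T})-[f(\RR_\K)]_T$ is asymptotically Hilbert--Schmidt-negligible, which rests on sufficient regularity (continuity and invertibility of $\I-\gamma^{-1}R_\K(e^{j\omega})$ on $\TT$)---automatic under the stability assumptions on $\K\inn\causal$ and the constraint $\gamma\I\psdg\RR_\K$.
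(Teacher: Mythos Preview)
Your proposal is correct and follows essentially the same route as the paper's own sketch: start from the finite-horizon Wasserstein-2 strong dual for the quadratic regret (the paper credits \cite{DRORO}), pass to the infinite horizon via Szeg\H{o}-type spectral asymptotics for the block-Toeplitz truncations (the paper phrases this as ``normalized spectral measures'' and ``limiting behavior of the spectrum of Toeplitz matrices''), and justify the interchange of $\inf_\gamma$ with the ergodic limit. Your identification of the product-of-Toeplitz trace asymptotic as the main technical hurdle is exactly the point the paper's sketch glosses over by deferring to \cite{kargin2023wasserstein}.
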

Notice that the optimization in \eqref{eq:dual_worst_case_regret} closely mirrors the finite-horizon version presented by \citet[Thm. 2]{DRORO}, with the key difference being the substitution of finite-horizon matrices with Toeplitz operators.

The second challenge is addressing the causality constraint on the controller. When the causality assumption on the controller is lifted, the non-causal policy $\K_\circ$ achieves zero worst-case expected regret since $\RR_\K$ becomes zero and so the worst-case regret by \cref{thm:strong_duality}. While this example illustrates the triviality of non-causal \DRO~problem, the minimization of worst-case expected regret objective in \eqref{eq:dual_worst_case_regret} over causal policies is, in general, not a tractable problem. 

Leveraging Fenchel duality of the objective in \eqref{eq:dual_worst_case_regret}, we address the causality constraint by reformulating \cref{prob:DR-RO} as a concave-convex saddle-point problem in \cref{thm:kkt} so that the well-known Wiener-Hopf technique \cite{wiener1931klasse,kailath_linear_2000} can be used to obtain the optimal \DRO~controller (see \cref{lemma:wiener} for details). To this end, let $\Delta^\ast \Delta = \I \+ \F^\ast \F$ be the \emph{canonical spectral factorization}\footnote{Analogues to Cholesky factorization of finite matrices.}, where both $\Delta$ and its inverse $\Delta^\inv$ are causal operators. We also introduce the \emph{Bures-Wasserstein} ($\BW$) distance for positive-definite (pd) operators defined as
\vspace{-2mm}
\begin{equation*}%\vspace{-2mm}
    \BW(\M_1, \M_2)^2 \!\defeq \!{\tr\br{\M_1 \+ \M_2 \- 2 {(\M_2^{\half} \M_1 \M_2^{\half})}^{\half}}}.
    \vspace{-2mm}
\end{equation*}
where $\M_1,\M_2\! \psdg\! 0$ with finite trace \cite{bhatia_bures-wasserstein_2017}.

\begin{theorem}[\textbf{A saddle-point problem for \DRO}]\label{thm:kkt}
    Under \cref{asmp:nominal}, \cref{prob:DR-RO} reduces to a feasible concave-convex saddle-point problem given as% \vspace{-2mm}
    \begin{equation}\label{eq:maxmin}
    %\vspace{-2mm}
        \sup_{\M \psdg 0}  \inf_{\K \in \causal} \tr(\RR_\K \M) \quad \mathrm{s.t.} \quad \BW(\M,\M_\circ) \leq r.
    \end{equation}
    Letting $\K_{\Htwo}\!\defeq\!\Delta^\inv \{\Delta \K_\circ \}_{\!+}$ be the $\Htwo$ controller, the unique saddle point $(\K_\star,\M_\star)$ of  \eqref{eq:maxmin} satisfies:
    %\vspace{-2mm}
    \begin{subequations}\label{eq:optimal K and M}
        \begin{align}
             \K_\star &= \K_{\Htwo} +  \Delta^{-1}\cl{  \{\Delta \K_\circ \}_{-} \L_\star}_{+} \L_\star^\inv,\label{eq:optimal K} \\
            \M_\star &= (\I-\gamma_\star^\inv \RR_{\K_\star})^{-1}\M_\circ (\I-\gamma_\star^\inv \RR_{\K_\star})^{-1}, \label{eq:optimal M}
        \end{align}
    \end{subequations}
where $\L_\star \L_\star^\ast \!=\! \M_\star$ is the canonical spectral factorization with causal and unique \footnote{See the note in \cref{remark:unique L} about the uniqueness of $\L$} $\L_\star$ and $\L_\star^{-1}$, and $\gamma_\star\>0$ uniquely satisfies $\tr\br{((\I-\gamma_\star^\inv \RR_{\K_\star})^\inv- \I)^2 \M_\circ} = r^2$.
\end{theorem}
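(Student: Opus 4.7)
The plan is to split the argument into three stages: (i) convert the variational dual in \cref{thm:strong_duality} into a primal maximization over covariances and swap inf and sup to arrive at \eqref{eq:maxmin}; (ii) solve the inner minimization over $\K\in\causal$ by a Wiener-Hopf (spectral-factorization) argument to obtain \eqref{eq:optimal K}; and (iii) read off the worst-case covariance \eqref{eq:optimal M} from the KKT conditions, and establish uniqueness of the saddle point.

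\textbf{Stage (i).} By \cref{thm:strong_duality}, \cref{prob:DR-RO} is equivalent to
\begin{equation*}
\inf_{\K\in\causal}\inf_{\substack{\gamma\geqq 0\\\gamma\I\psdg \RR_\K}}\gamma\tr\br{((\I\-\gamma^{-1}\RR_\K)^{-1}\-\I)\M_\circ}\+\gamma r^2.
\end{equation*}
The inner minimization over $\gamma$ is precisely the Lagrangian dual of maximizing the linear functional $\tr(\RR_\K\M)$ over the Bures-Wasserstein ball of radius $r$ around $\M_\circ$; indeed, the worst-case disturbance $w_\star\=(\I\-\gamma_\star^{-1}\RR_\K)^{-1}w_\circ$ identified in \cref{thm:strong_duality} directly singles out $\M_\star$ in \eqref{eq:optimal M} as the primal maximizer. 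Feasibility of the saddle-point problem is immediate since $\M_\circ$ lies in the BW-ball. Since $(\K,\M)\mapsto \tr(\RR_\K\M)$ is convex in $\K$ (as $\RR_\K$ is quadratic in $\K$) and linear in $\M$, and the BW-ball is weakly compact in the cone of positive operators, Sion's minimax theorem justifies interchanging inf and sup, yielding \eqref{eq:maxmin}.

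\textbf{Stages (ii)--(iii).} For fixed $\M\psdg 0$, let $\L\L^\ast\=\M$ be the canonical factorization with causal $\L,\L^{-1}$, and recall $\Delta^\ast\Delta\=\I\+\F^\ast\F$ has causal $\Delta,\Delta^{-1}$. By cyclicity of trace,
\begin{equation*}
\tr(\RR_\K\M)\=\Norm[\HS]{\Delta(\K\-\K_\circ)\L}^{2}.
\end{equation*}
The substitution $\Xi\defeq \Delta\K\L$ is a bijection on $\causal$ (since all four factors have causal inverses), so the inner problem becomes $\inf_{\Xi\in\causal}\Norm[\HS]{\Xi\-\Delta\K_\circ\L}^{2}$, whose unique minimizer is the orthogonal projection $\Xi_\star\=\{\Delta\K_\circ\L\}_+$ onto the causal subspace. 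Unwinding the substitution and splitting $\Delta\K_\circ\=\{\Delta\K_\circ\}_+\+\{\Delta\K_\circ\}_-$ (with $\{\Delta\K_\circ\}_+\L$ already causal) yields \eqref{eq:optimal K}. Plugging $\K\=\K_\star$ back into \cref{thm:strong_duality} delivers \eqref{eq:optimal M}, with $\gamma_\star$ pinned by saturation of the BW constraint. Uniqueness of the saddle point then follows from strict convexity of $\K\mapsto\tr(\RR_\K\M_\star)$ on $\causal$ (using $\M_\star\psdg 0$) and strict concavity of $\M\mapsto\tr(\RR_{\K_\star}\M)$ on the BW sphere.

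\textbf{Main obstacle.} The principal technical difficulty lies not in the KKT algebra but in the infinite-dimensional minimax exchange and the existence of the canonical spectral factorization $\L_\star\L_\star^\ast\=\M_\star$ required to execute Wiener-Hopf. This demands that $\M_\star$ be a bounded, boundedly invertible Toeplitz operator with a continuous, positive spectral density — a property that must be deduced from continuity and positivity of the nominal spectrum of $\M_\circ$ (granted by \cref{asmp:nominal}) together with boundedness of $\RR_{\K_\star}$, which itself hinges on $\K_\star$ being stabilizing (cf.~\cref{thm:stabilizable}). Careful verification of these operator-theoretic properties on $\ell_2(\Z)$ is what demands the most care in the full proof.
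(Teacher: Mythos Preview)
Your three-stage plan matches the paper's route: convert the dual of \cref{thm:strong_duality} into a sup over covariances constrained by the Bures--Wasserstein ball, solve the inner $\K$-minimization by Wiener--Hopf, and extract $\M_\star$ from the optimality conditions. Two points deserve tightening, however.

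First, in Stage~(i) you invoke Sion's theorem via ``weak compactness of the BW-ball in the cone of positive operators,'' which is a nontrivial topological claim for infinite-dimensional Toeplitz operators that you do not justify. The paper sidesteps this entirely: it keeps the Lagrange multiplier $\gamma$ explicit, rewrites $\tr[(\I-\gamma^{-1}\RR_\K)^{-1}\M_\circ]$ as a Fenchel supremum over $\M\psdg 0$ (which produces the $-\gamma\,\BW(\M,\M_\circ)^2$ term directly in the objective), and then swaps $\inf_{\gamma}\sup_{\M}$ using only affinity in $\gamma$ and strict concavity in $\M$ --- no compactness is needed. Second, your uniqueness argument for $\M_\star$ asserts ``strict concavity of $\M\mapsto\tr(\RR_{\K_\star}\M)$ on the BW sphere,'' but this map is \emph{linear} in $\M$ and therefore not strictly concave anywhere; the claim as written is simply false. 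Uniqueness of $\M_\star$ actually comes from strict concavity of the Lagrangian $\M\mapsto\tr(\RR_{\K_\star}\M)-\gamma_\star\,\BW(\M,\M_\circ)^2$, which is carried by the $\BW^2$ term; the paper obtains the explicit form \eqref{eq:optimal M} by setting the gradient of this Lagrangian (computed via Danskin's theorem) to zero, rather than by quoting \cref{thm:strong_duality} a second time.
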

This result demonstrates that the optimal \DRO~controller integrates the $\Htwo$ controller with an additional correction term that accounts for the time correlations in the worst-case disturbance, $\w_\star$, which are encapsulated by the auto-covariance operator $\M_\star$.

\begin{remark}\label{remark:limiting_r}
As $r\!\to\!\infty$, the optimal $\gamma_\star$ approaches the lower bound $\gamma_{\RO}\= \inf_{\K\in\causal}\norm[\op]{\RR_\K}$ and $\K_\star$ recovers the regret-optimal ($\RO$) controller. Conversely, as $r\!\to\!0$, the ambiguity set collapses to the nominal model as $\gamma_\star\!\to\!\infty$ and $\K_\star$ recovers the $\Htwo$ controller when $\M_\circ \= \I$. Thus, adjusting $r$ facilitates the \DRO~controller to interpolate between the $\RO$ and $\Htwo$ controllers.
\end{remark}

We conclude this section by asserting the closed-loop stability of \eqref{eq:state_space} under the optimal \DRO~controller, $\K_\star$. This stability directly results from the saddle-point problem \eqref{eq:maxmin} achieving a finite optimal value.
\begin{corollary}\label{thm:stabilizable}
    $\K_\star$ stabilizes the closed-loop system.  
\end{corollary}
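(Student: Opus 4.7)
The plan is to leverage the finiteness of the saddle-point value established in \cref{thm:kkt}. Schematically: finiteness of the saddle-point value forces $\overline{\Reg}_\infty(\K_\star,r)<\infty$; by \cref{thm:strong_duality} this forces $\RR_{\K_\star}$ to be a bounded operator; this in turn forces $\K_\star$ to have a transfer function in $\Hinf$; and, together with the standing stabilizability of $(A,B_u)$, this yields internal closed-loop stability.

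First, by \cref{thm:kkt} the saddle-point problem~\eqref{eq:maxmin} admits a finite optimum $(\K_\star,\M_\star)$ equal to $\overline{\Reg}_\infty(\K_\star,r)$. Instantiating \cref{thm:strong_duality} at $\K=\K_\star$, the infimum over $\gamma$ in \eqref{eq:dual_worst_case_regret} must be attained at some finite $\gamma_\star\geq 0$ satisfying $\gamma_\star\I\psdgeq\RR_{\K_\star}$, so $\RR_{\K_\star}$ is a bounded self-adjoint operator on the disturbance space.

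Next, using the factorization
\begin{equation*}
    \RR_{\K_\star}=(\K_\star-\K_\circ)^\ast(\I+\F^\ast\F)(\K_\star-\K_\circ)\psdgeq(\K_\star-\K_\circ)^\ast(\K_\star-\K_\circ),
\end{equation*}
and taking operator norms yields $\norm[\op]{\K_\star-\K_\circ}<\infty$. Since the noncausal optimal controller $\K_\circ$ in~\eqref{eq:non-causal} is itself a bounded Toeplitz operator under the controllability/observability hypotheses that make the noncausal cost well-posed, we conclude that $\K_\star$ is a bounded causal Toeplitz operator, i.e., its transfer function $K_\star(z)$ lies in $\Hinf$.

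The final and main step is to upgrade this input/output boundedness to \emph{internal} closed-loop stability. Under the standing stabilizability of $(A,B_u)$, controllability of $(A,B_w)$, and observability of $(A,C)$, a classical Kalman-decomposition argument shows that a causal LTI disturbance-feedback controller with transfer function in $\Hinf$ produces a closed-loop realization whose state matrix has all eigenvalues in the open unit disk: any unstable closed-loop mode would be either uncontrollable from $\w$ or unobservable through $(\s,\u)$, which the structural assumptions on $(A,B_u,B_w,C)$ rule out. I expect this last step to be the only nontrivial technical hurdle; the preceding reductions are direct consequences of \cref{thm:kkt} and \cref{thm:strong_duality}.
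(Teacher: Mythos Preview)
Your overall strategy---deducing stability from the finiteness of the saddle-point value in \cref{thm:kkt}---is exactly what the paper asserts (in one sentence, before the corollary and before \cref{def:worst case regret}), and your Steps 1--2 extracting boundedness of $\RR_{\K_\star}$ via \cref{thm:strong_duality} are sound. The gap is in Step 3. The implication ``$K_\star(z)\in\Hinf$ together with controllability/observability of $(A,B_u,B_w,C)$ yields internal stability'' is false as stated: take $\K=0$, which trivially lies in $\Hinf$, yet for an unstable $A$ the closed-loop map $\w\mapsto\s=\G\w$ is unbounded, and the structural assumptions do not help---every mode of $A$ \emph{is} controllable from $\w$ and observable through $\s$. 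A Kalman-decomposition argument of the kind you sketch only applies once you already know that the \emph{full} closed-loop map $\w\mapsto(\s,\u)$ is bounded, i.e., that both $\K_\star$ and $\F\K_\star+\G$ lie in $\Hinf$. You have established only the former.

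The missing link is that boundedness of $\RR_{\K_\star}$ already delivers the latter. Writing the closed-loop cost operator $\CC_\K\defeq(\F\K+\G)^\ast(\F\K+\G)+\K^\ast\K$, a short calculation using~\eqref{eq:non-causal} gives the identity $\CC_\K=\RR_\K+\CC_{\K_\circ}$, and the noncausal optimal cost $\CC_{\K_\circ}=\G^\ast(\I+\F\F^\ast)^{-1}\G$ is bounded under the standing stabilizability hypothesis. Hence bounded $\RR_{\K_\star}$ forces bounded $\CC_{\K_\star}$, which gives $\F\K_\star+\G\in\Hinf$; only then does your minimality argument legitimately rule out hidden unstable modes. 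The paper itself does not spell this out, so your proposal captures the intended idea; you simply routed the argument through $\K_\star$ alone rather than through the closed-loop transfer $\F\K_\star+\G$, which is the object that actually carries the stability information.
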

\vspace{-4.5mm}

\section{An Efficient Algorithm} \label{sec:fixed_point}

In this section, we introduce a numerical method to compute the saddle-point $(\K_{\star}, \M_{\star})$ of the max-min problem in \eqref{eq:maxmin}. While both $(\K_{\star}, \M_{\star})$ are non-rational, \ie, do not admit a finite order state-space realization, \cref{thm:fixed_point} states that $\M_{\star}$ possesses a \emph{finite-dimensional parametric} form in the frequency domain. Exploiting this fact, we conceive \cref{alg:fixed_point_detailed}, a procedure based on the Frank-Wolfe method, to compute the optimal $\M_\star$ in the frequency domain. Furthermore, we devise a novel approach to approximate the non-rational $\M_\star$ in $\Hinf$-norm by positive rational functions, from which a near-optimal state-space \DRO~controller can be computed using \eqref{eq:optimal K}. We leave the discussion on the rational approximation method to \cref{sec:state_space}.

To enhance the clarity of our approach, we assume for the remainder of this paper that the nominal disturbances are uncorrelated, \ie, $\M_\circ \= \I$. Additionally, we utilize the frequency-domain representation of Toeplitz operators as transfer functions, denoting $\M$ as $M(z)$, $\L$ as $L(z)$, $\K$ as $K(z)$, and similarly for other operators, where $z\inn \C$.
\vspace{-3mm}

\subsection{Finite-Dimensional Parametrization of $\M_\star$}\label{subsec::subK}

We first obtain an equivalent condition of optimality of $\M_\star$.
\begin{lemma}\label{lemma:kkt2}
    Define the anti-causal operator $\mathcal T\defeq\{\Delta \K_\circ\}_{\-}$. The optimality condition in \eqref{eq:optimal K and M} takes the equivalent form:
    \vspace{-2mm}
    \begin{equation}\label{eq:optimal N sqrt form}
        \L_\star^\ast \L_\star = \frac{1}{4}\pr{\I \+ \sqrt{\I \+ 4 \gamma_\star^\inv \{\mathcal T \L_{\star}\}_{\-}^\ast \{\mathcal T \L_{\star}\}_{\-}} }^{\!2}
        \vspace{-2mm}
    \end{equation} where $\gamma_\star\>0$ is such that $ \BW(\L_\star \L_\star^\ast,\I )=r$.
\end{lemma}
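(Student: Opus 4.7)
The plan is to substitute the two optimality conditions (10a) and (10b) into one another, exploit the standing assumption $\M_\circ = \I$, and reduce everything to an operator-quadratic equation in $(\L_\star^\ast \L_\star)^{1/2}$.

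First, I would simplify (10b). Since $\RR_{\K_\star}$ is self-adjoint and $\gamma_\star \I \psdg \RR_{\K_\star}$ by feasibility in \cref{thm:kkt}, the operator $\I - \gamma_\star^{-1}\RR_{\K_\star}$ is positive definite; with $\M_\circ = \I$, (10b) collapses to $\M_\star = (\I - \gamma_\star^{-1}\RR_{\K_\star})^{-2}$, so taking the positive square root gives the compact form $\gamma_\star^{-1}\RR_{\K_\star} = \I - \M_\star^{-1/2}$. Next I would unpack (10a). Writing $\Delta \K_\circ = \{\Delta \K_\circ\}_{+} + \mathcal T$ and using $\K_{\Htwo} = \Delta^{-1}\{\Delta \K_\circ\}_{+}$, a direct manipulation yields $\K_\star - \K_\circ = -\Delta^{-1}\{\mathcal T \L_\star\}_{-}\L_\star^{-1}$, so
\[
\RR_{\K_\star} = (\K_\star - \K_\circ)^\ast \Delta^\ast \Delta (\K_\star - \K_\circ) = \L_\star^{-\ast}\{\mathcal T \L_\star\}_{-}^\ast\{\mathcal T \L_\star\}_{-}\L_\star^{-1}.
\]
Substituting into the compact KKT form and sandwiching by $\L_\star^\ast$ on the left and $\L_\star$ on the right leaves
\[
\gamma_\star^{-1}\{\mathcal T \L_\star\}_{-}^\ast \{\mathcal T \L_\star\}_{-} = \L_\star^\ast \L_\star - \L_\star^\ast \M_\star^{-1/2}\L_\star.
\]

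The crucial step is then the polar-decomposition identity $\L_\star^\ast (\L_\star \L_\star^\ast)^{-1/2}\L_\star = (\L_\star^\ast \L_\star)^{1/2}$. I would justify this pointwise in the frequency domain: at almost every $z\inn\TT$, $L_\star(z)$ is an invertible square matrix (because both $\L_\star$ and $\L_\star^{-1}$ are bounded causal operators by \cref{thm:kkt}), so it admits a polar decomposition $L_\star(z) = U(z)(L_\star^\ast L_\star)^{1/2}(z)$ with $U(z)$ unitary, from which the identity is immediate. Setting $X \defeq (\L_\star^\ast \L_\star)^{1/2}$ and $Y \defeq \gamma_\star^{-1}\{\mathcal T \L_\star\}_{-}^\ast \{\mathcal T \L_\star\}_{-}$, the preceding equation becomes the operator-quadratic $X^2 - X = Y$ with $X \psdgeq 0$ and $Y \psdgeq 0$. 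Because $Y$ is a polynomial in $X$, $X$ and $Y$ commute, so the functional calculus applies at each spectral value: the two scalar roots are $\tfrac12(1 \pm \sqrt{1+4y})$, and only the $+$ branch is nonnegative. Therefore $X = \tfrac{1}{2}(\I + \sqrt{\I + 4Y})$, and squaring recovers (11). The trace constraint follows immediately from the one in \cref{thm:kkt} via $\BW(\M, \I)^2 = \tr[(\M^{1/2} - \I)^2]$ and $\M_\circ = \I$; the reverse implication (that (11) together with the stated $\gamma_\star$ condition yields (10)) follows by reversing these algebraic steps.

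The main obstacle is the polar-decomposition identity: while routine for invertible matrices, at the operator level it hinges on $L_\star(z)$ being invertible for a.e.\ $z \in \TT$, which in turn relies on the canonical spectral factorization of \cref{thm:kkt}, and on a mild measurability check to lift the pointwise identity to operators. A secondary subtlety is uniqueness of the positive solution to the operator-quadratic, which I handle by noting the automatic commutativity $[X,Y]=0$ and then reducing to the scalar case spectrally.
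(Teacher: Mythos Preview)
Your proposal is correct and follows essentially the same route as the paper: simplify (10b) under $\M_\circ=\I$ to $\M_\star=(\I-\gamma_\star^{-1}\RR_{\K_\star})^{-2}$, compute $\RR_{\K_\star}=\L_\star^{-\ast}\{\mathcal T\L_\star\}_{-}^\ast\{\mathcal T\L_\star\}_{-}\L_\star^{-1}$ from (10a), sandwich by $\L_\star^\ast,\L_\star$, invoke the identity $\L_\star^\ast(\L_\star\L_\star^\ast)^{-1/2}\L_\star=(\L_\star^\ast\L_\star)^{1/2}$, and solve the resulting quadratic. Your added justifications (the pointwise polar-decomposition argument for the identity and the commutativity/functional-calculus argument selecting the positive root) are sound refinements of steps the paper simply asserts.
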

\vspace{-2mm}
Denoting $\NN_{\star} \!\defeq\!  \L_{\star}^\ast \L_{\star}$, there exists a one-to-one mapping between $\M_{\star}\=\L_\star \L_\star^\ast$ and $\NN_{\star}$ due to the uniqueness of the spectral factorization. Consequently, we interchangeably refer to both $\NN_{\star}$ and $\M_{\star}$ as the optimal solution. The following theorem characterizes the optimal $\NN_\star$ in the frequency domain, implying a finite-dimensional parametrization.
\begin{theorem}\label{thm:fixed_point}
Denoting by $T(z)\=\overline{C}(z^{\-1}I \- \overline{A})^\inv \overline{B}$ the transfer function of the anti-causal operator $\mathcal T=\{\Delta \K_\circ\}_{\!-}$, let $f:(\gamma, \Gamma)\inn \R\timess \R^{d_x \times d_w } \mapsto \NN $ return a pd operator with a transfer function $z\inn \TT \mapsto N(z)$ taking the form
\vspace{-1mm}
\begin{equation*}
\vspace{-2mm}
    \frac{1}{4}\pr{I \+ \sqrt{I \+ 4 \gamma^\inv {\Gamma}^\ast {(z^{\-1}I \- \overline{A})}^{\!-\ast}\overline{C}^\ast \overline{C}{(z^{\-1}I \- \overline{A})}^\inv\Gamma }}^{\!2},
\end{equation*}
where $(\overline{A},\overline{B},\overline{C})$ are obtained from the state-space parameters of the system in \eqref{eq:state_space} (see \cref{ap:fixed}). Then, the optimal solution $\NN_\star\=\L_\star^\ast \L_\star$ in \eqref{eq:optimal N sqrt form} satisfies $\NN_\star = f(\gamma_\star, \Gamma_\star)$ where
\vspace{-2mm}
\begin{equation}\label{eq:finite parameter computation}
    \Gamma_\star \defeq \frac{1}{2\pi} \int_{0}^{2\pi} (I-\ejw \overline{A})^\inv \overline{B} L_\star(\ejw) d\omega ,
\end{equation}
and $\gamma_\star\>0$ is such that $ \BW(\L_\star \L_\star^\ast,\I )=r$.
\end{theorem}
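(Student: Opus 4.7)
The plan is to start from the simplified optimality condition~\eqref{eq:optimal N sqrt form} in \cref{lemma:kkt2} and derive an explicit closed-form expression for the strictly anti-causal operator $\{\mathcal T \L_\star\}_-$ in the frequency domain. The key observation is that, although $L_\star(z)$ is an infinite-dimensional object, only a finite-dimensional ``contraction'' of it survives the anti-causal projection once the rational state-space structure of $\mathcal T$ is exploited.

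First I would establish (with details deferred to the appendix) that $\mathcal T \= \{\Delta \K_\circ\}_-$ admits an anti-causal rational realization $T(z) \= \bar C (z^{-1}I \- \bar A)^{-1}\bar B$ with $\rho(\bar A) < 1$. This comes from the identity $\Delta \K_\circ = -\Delta^{-\ast} \F^\ast \G$ combined with the strictly causal state-space realizations of $\F,\G$ induced by $(A, B_u, B_w, C)$ and the spectral factor $\Delta$, which together yield a fixed finite-dimensional triple $(\bar A, \bar B, \bar C)$ depending only on the plant data. This is the single nontrivial algebraic ingredient of the theorem.

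Second, expanding $T(z) \= \sum_{k \geq 1} \bar C \bar A^{k-1} \bar B\, z^{k}$ and $L_\star(z) \= \sum_{j \geq 0} L_{\star,j} z^{-j}$ (the latter by causality of $\L_\star$), I would multiply the two Laurent series and read off the strictly anti-causal part coefficient by coefficient. The coefficient at level $n \geq 1$ collapses to $\bar C \bar A^{n-1}\bigl(\sum_{j=0}^{\infty} \bar A^{j} \bar B L_{\star,j}\bigr)$, so that
\begin{equation*}
\{\mathcal T \L_\star\}_-(z) = \bar C (z^{-1}I - \bar A)^{-1} \Gamma_\star, \qquad \Gamma_\star \defeq \sum_{j\geq 0} \bar A^{j} \bar B L_{\star,j}.
\end{equation*}
The integral representation~\eqref{eq:finite parameter computation} then follows by recognizing that $(I \- e^{j\omega}\bar A)^{-1}$ is the Fourier series $\sum_{k \geq 0} e^{j k\omega}\bar A^{k}$ and applying Parseval's relation against $L_\star(e^{j\omega})$. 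Convergence of the defining sum for $\Gamma_\star$ is secured by $\rho(\bar A) < 1$ together with boundedness of $\L_\star$, which in turn follows from the finiteness of the saddle value guaranteed by \cref{thm:kkt}.

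Finally, substituting the resulting transfer function of $\{\mathcal T \L_\star\}_-^\ast \{\mathcal T \L_\star\}_-$, namely $\Gamma_\star^\ast (z^{-1}I \- \bar A)^{-\ast}\bar C^\ast \bar C (z^{-1}I \- \bar A)^{-1}\Gamma_\star$, into \eqref{eq:optimal N sqrt form} matches $N_\star(z)$ pointwise with $f(\gamma_\star,\Gamma_\star)$, while $\gamma_\star$ is pinned down by $\BW(\L_\star\L_\star^\ast,\I)\=r$ as in \cref{thm:kkt}. The main obstacle is the first step: constructing the state-space triple $(\bar A,\bar B,\bar C)$ for $\mathcal T$ in a form that makes the subsequent projection transparent. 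Once that rational realization is in hand, the remainder of the argument is a routine Laurent-series manipulation followed by direct substitution into the fixed-point identity.
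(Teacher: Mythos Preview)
Your proposal is correct and follows essentially the same route as the paper: the paper also starts from \cref{lemma:kkt2}, invokes the rational anti-causal realization $T(z)=\bar C(z^{-1}I-\bar A)^{-1}\bar B$ (citing a lemma adapted from Sabag et al.\ for the explicit $(\bar A,\bar B,\bar C)$), performs the same Neumann/Laurent series multiplication to extract $\{\mathcal T\L_\star\}_-(z)=\bar C(z^{-1}I-\bar A)^{-1}\Gamma_\star$ with $\Gamma_\star=\sum_{j\geq 0}\bar A^j\bar B\widehat L_{\star,j}$, and then applies Parseval to obtain the integral~\eqref{eq:finite parameter computation} before substituting back into~\eqref{eq:optimal N sqrt form}. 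The only cosmetic difference is that the paper packages the anti-causal projection step as a standalone lemma rather than inlining it.
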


\vspace{-2mm}
Notice in \cref{thm:fixed_point} that $N_{\star}(z)$ involves the square root of a rational term. In general, the square root does not preserve rationality. We thus get Corollary~\ref{thm:irrational}.
\begin{corollary}\label{thm:irrational}
   The optimal \DRO~controller, $K_{\star}(z)$, and $N_{\star}(z)$ are non-rational. Thus, $K_{\star}(z)$ does not admit a finite-dimensional state-space form.
\end{corollary}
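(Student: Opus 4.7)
The plan is to prove non-rationality by direct inspection of the closed-form for $N_\star(z)$ furnished by \cref{thm:fixed_point}. Writing
\begin{equation*}
N_\star(z) = \tfrac14\bigl(I + \sqrt{S(z)}\bigr)^{2} = \tfrac14\bigl(I + S(z)\bigr) + \tfrac12\sqrt{S(z)},
\end{equation*}
with $S(z) \defeq 4\gamma_\star^{-1}\Gamma_\star^\ast(z^{-1}I - \overline{A})^{-\ast}\overline{C}^\ast \overline{C}(z^{-1}I - \overline{A})^{-1}\Gamma_\star$ manifestly rational, I would argue by contradiction: if $N_\star(z)$ were rational, the displayed identity would force $\sqrt{S(z)}$ to be rational, and hence $S(z)$ to be the square of a rational matrix function.

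The crux of the argument is to show that $S(z)$ is not a perfect square of any rational matrix function. The strategy is to pass to the Smith--McMillan form of $S(z)$ and exhibit an elementary divisor of odd multiplicity at a zero $z_0$ of $\det S(z)$: since $S(z)$ is built directly from the state-space data $(\overline{A},\overline{C},\Gamma_\star)$ with no algebraic mechanism enforcing even multiplicities---the system matrices coming from the parameters of \eqref{eq:state_space} under the standing stabilizability/observability assumptions generically yield simple invariant zeros---such a $z_0$ exists. Near it, the PSD square root of $S(z)$ behaves like $\sqrt{z-z_0}$, producing a genuine algebraic branch point that no single-valued rational function can possess. This contradiction establishes non-rationality of $N_\star(z)$. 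I expect this Smith--McMillan step to be the main obstacle, since one must rule out the (non-generic) possibility of cancellations that miraculously produce even multiplicities throughout.

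Non-rationality of $\L_\star$, and hence of $\L_\star^{-1}$, then follows from the canonical spectral factorization $\L_\star^\ast \L_\star = \NN_\star$ (unique per \cref{remark:unique L}): were $\L_\star$ rational, the product $\L_\star^\ast \L_\star$ would be too, contradicting the previous step. Finally, substituting into equation \eqref{eq:optimal K} yields
\begin{equation*}
\K_\star - \K_{\Htwo} = \Delta^{-1}\cl{\{\Delta \K_\circ\}_{-}\, \L_\star}_{+} \L_\star^{-1},
\end{equation*}
which carries both $\L_\star$ and $\L_\star^{-1}$. Since $\K_{\Htwo}$ is rational while this correction term is non-rational---the rational anti-causal factor $\{\Delta \K_\circ\}_{-}$ cannot generically cancel the branch-point structure introduced by the non-rational $\L_\star$ under the causal projection and subsequent multiplication by $\L_\star^{-1}$---the controller $\K_\star$ is itself non-rational and therefore admits no finite-order state-space realization. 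Everything beyond the Smith--McMillan step is bookkeeping.
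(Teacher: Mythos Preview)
Your approach is considerably more detailed than the paper's, which offers only the one-sentence observation that $N_\star(z)$ in \cref{thm:fixed_point} contains the positive-definite square root of a rational matrix and that ``the square root does not preserve rationality in general.'' You are attempting to turn that heuristic into an actual argument via Smith--McMillan forms and branch points, which is the right instinct.

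However, there is a formula error that undermines your specific argument. From \cref{thm:fixed_point} the expression is
\[
N_\star(z) \;=\; \tfrac14\Bigl(I + \sqrt{\,I + S(z)\,}\Bigr)^{2},
\]
with $S(z)$ as you defined it; you have dropped the identity inside the radical. This matters because your Smith--McMillan analysis targets the zeros of $\det S(z)$, which you identify with invariant zeros of the triple $(\overline A,\Gamma_\star,\overline C)$ and argue are generically simple. But the branch points of the square root actually at stake lie at the zeros of $\det\bigl(I+S(z)\bigr)$, not of $\det S(z)$. Since $I+S(z)\succeq I$ on $\TT$, those zeros sit strictly off the unit circle and bear no direct relation to the invariant zeros of the state-space data, so your odd-multiplicity claim is about the wrong object. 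The repair is routine---argue instead that the rational scalar $\det\bigl(I+S(z)\bigr)$ generically has simple roots, whence the square root acquires algebraic branch points---but it requires a different justification than the invariant-zero one you gave.

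Finally, note that both your argument and the paper's are ultimately \emph{generic}: neither excludes the measure-zero set of system parameters for which $I+S(z)$ happens to be a perfect rational square, so the corollary's unqualified statement is, strictly speaking, stronger than either proof establishes.
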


\vspace{-2mm}
Given the non-rationality of the controller $K_{\star}(z)$, \citet{kargin2023wasserstein} proposes a fixed-point algorithm exploiting the finite-parametrization of the controller. In the next section, we propose an alternative efficient optimization algorithm, which, in contrast to the fixed-point algorithm, has \emph{provable convergence guarantee} to the saddle point $(\K_{\star}, \NN_{\star})$.

\subsection{An Iterative Optimization in the Frequency Domain}\label{subsec:FW}
Although the problem is concave, its infinite-dimensional nature complicates the direct application of standard optimization tools. To address this challenge, we employ frequency-domain analysis via transfer functions,  allowing for the adaptation of standard optimization techniques. Specifically, we utilize a variant of the Frank-Wolfe method \cite{frank_algorithm_1956,jaggi_revisiting_2013}. Our approach is versatile and can be extended to other methods, such as projected gradient descent \cite{goldstein1964convex} and the fixed-point method in \cite{kargin2023wasserstein}. Furthermore, the convergence of our method to the saddle point $(\K_{\star}, \NN_{\star})$ can be demonstrated using standard tools in optimization. Detailed pseudocode is provided in \cref{alg:fixed_point_detailed} in \cref{app:detailed pseudocode}.

\textbf{Frank-Wolfe:} We define the following function and its (Gateaux) gradient \cite{danskin}:
\begin{align}\label{eq:wiener-hopf function}
    \Phi(\M) &\triangleq \inf_{\K \in \causal} \tr\pr{ \RR_\K \M }\\
    \nabla \Phi(\M) &\= \L^{-\ast} \cl{ \Delta \K_\circ \L  }^\ast_{-}\cl{\Delta \K_\circ \L  }_{-} \L^{-1} \ .
\end{align}
where $\L\L^\ast \=\M$ is the spectral factorization. Rather than directly solving the optimization \eqref{eq:maxmin}, the Frank-Wolfe method solves a linearized subproblem in consecutive steps. Namely, given the $k^\textrm{th}$ iterate $\M_k$, the next iterate $\M_{k+1}$ is obtained via
\vspace{-2mm}
\begin{subequations} \label{eq: FW updates}
    \begin{equation}
     \vspace{-2mm}
    \widetilde{\M}_{k} \!=\! \argmax_{\substack{\M \psdgeq \I,\; \BW(\M,\I )\leq r}} \,\tr\pr{\nabla \Phi(\M_k)\,   \M }    \label{eq:linear Opt}  % \\
    %&\mathrm{s.t.} \quad \tr(\M - 2\sqrt{\M} + \I) \leq r^2,  \label{eq:fw linear subproblem}
    \end{equation}
    \vspace{-2mm}
\begin{equation}
    \vspace{-0mm}
    \quad\quad\M_{k+1} = (1-\eta_k) \M_k + \eta_k \widetilde{\M}_k,  \label{eq:fw linear combination}
\end{equation}
\end{subequations}
where $\eta_k\inn [0,1]$ is a step-size, commonly set to $\eta_k \= \frac{2}{k+2}$ \cite{jaggi_revisiting_2013}. Letting $\RR_{k} \!\defeq \!\nabla \Phi(\M_k)$ be the gradient as in \eqref{eq:wiener-hopf function}, Frank-Wolfe updates can be expressed equivalently using spectral densities as:
\begin{align}\label{eq:frank wolfe frequency}
    &\widetilde{M}_{k}(z)\= (I \- \gamma_k^\inv R_{k}(z))^{-2} \\
    &\quad M_{k+1}(z) \= (1\-\eta_k) M_k(z) \+ \eta_k \widetilde{M}_k(z), \quad \forall z\in\TT
\end{align}
where $\gamma_k\>0$ solves $\tr\br{((\I\-\gamma_k^\inv \RR_{k})^{-1} \- \I)^2} \= r^2$. See \cref{app: gradients in fw} for a closed-form $R_{k}(z)$.

\textbf{Discretization:} Instead of the continuous unit circle $\TT$, we use its uniform discretization with $N$ points, $\TT_N \defeq \{\e^{j 2\pi n /N} \mid n = 0, \dots, N-1\}$. Updating $M_{k+1}(z)$ at a frequency $z$ using the gradient $R_{k}(z)$ at the same $z$ requires $M_k(z^\prime)$ at all frequencies $z^\prime \in \TT$ due to spectral factorization. Thus, $M_{k+1}(z)$ depends on $M_k(z^\prime)$ across the entire circle. This can be addressed by finer discretization.

\textbf{Spectral Factorization:} For the non-rational spectral densities $M_k(z)$, we can only use an \emph{approximate} factorization \cite{sayed_survey_2001}. Consequently, we use the DFT-based algorithm from \citet{rino_factorization_1970}, which efficiently factorizes scalar densities (\ie, $d_w \= 1$), with errors diminishing rapidly as $N$ increases. Matrix-valued spectral densities can be factorized using various other algorithms \cite{wilson_factorization_1972, ephremidze_elementary_2010}. See \cref{app: spectral factorization} for a pseudocode.

\textbf{Bisection: } We use bisection method to find the $\gamma_k\>0$ that solves $\tr\br{((\I\-\gamma_k^\inv \RR_{k})^{-1} \- \I)^2} \= r^2$ in the Frank-Wolfe update \eqref{eq:frank wolfe frequency}. See \cref{app:bisection} for a pseudocode.

\begin{remark}
    The gradient $R_{k}(z)$ requires the computation of the finite-dimensional parameter via \eqref{eq:finite parameter computation}, which can be performed using $N$-point trapezoidal integration. See \cref{app: gradients in fw} for details.
\end{remark}
We conclude this section with the following convergence result due to \cite{jaggi_revisiting_2013, lacoste-julien_affine_2014}.
\begin{theorem}[\textbf{Convergence of $\M_k$}]\label{thm: convergence of FW}
    There exists constants $\delta_N\>0$, depending on discretization $N$, and $\kappa\>0$, depending only on state-space parameters \eqref{eq:state_space} and $r$, such that, for a large enough $N$, the iterates in \eqref{eq: FW updates} satisfy
    \vspace{-2mm}
    \begin{equation}\label{eq:convergence rate}
    \vspace{-2mm}
        \Phi(\M_\star) - \Phi(\M_k) \leq \frac{2\kappa}{k+2}(1+\delta_N).
        \vspace{-2mm}
    \end{equation}
    \vspace{-3mm}
\end{theorem}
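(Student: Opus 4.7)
The plan is to invoke the standard Frank-Wolfe convergence theorem of \cite{jaggi_revisiting_2013, lacoste-julien_affine_2014} for concave maximization with an inexact linear oracle, adapted to the infinite-dimensional problem via the frequency-domain parametrization and its $N$-point discretization of $\TT$.

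First, I would verify the three ingredients required by the framework: (i) concavity of $\Phi$, which follows immediately from its definition in \eqref{eq:wiener-hopf function} as an infimum of affine functionals in $\M$; (ii) convexity and (appropriate) compactness of the feasible set $\mathcal{D} \defeq \{\M \psdgeq \I : \BW(\M, \I) \leq r\}$, where the lower bound $\M \psdgeq \I$ follows from \eqref{eq:optimal M}; and (iii) boundedness of the Frank-Wolfe curvature constant
\begin{equation*}
    C_\Phi \defeq \!\!\!\sup_{\substack{\M, \widetilde{\M} \in \mathcal{D},\, \eta \in [0,1] \\ \M' = \M + \eta(\widetilde{\M}-\M)}} \!\!\!\frac{2}{\eta^2}\bigl[\Phi(\M) - \Phi(\M') + \eta \tr\bigl((\widetilde{\M}-\M)\nabla\Phi(\M)\bigr)\bigr].
\end{equation*}
This last condition reduces to Lipschitz continuity of $\nabla \Phi$ on $\mathcal{D}$, which can be extracted from the closed-form gradient $\nabla \Phi(\M) = \L^{-\ast}\{\Delta\K_\circ \L\}_{-}^{\ast}\{\Delta\K_\circ \L\}_{-}\L^{-1}$ together with the uniform boundedness of $\L, \L^{-1}$ over $\mathcal{D}$ induced by $\M \psdgeq \I$ and $\BW(\M,\I) \leq r$. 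This yields the constant $\kappa \geq C_\Phi$, depending only on $(A,B_u,B_w,C)$ and $r$.

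Second, I would account for the fact that the linearized subproblem \eqref{eq:linear Opt} is not solved exactly: working on $\TT_N$ rather than $\TT$ introduces three error sources that all shrink with $N$ — the DFT-based spectral factorization of \cite{rino_factorization_1970}, the trapezoidal-rule evaluation of $\Gamma_k$ in \eqref{eq:finite parameter computation}, and the bisection search for $\gamma_k$. Combining these, for $N$ sufficiently large there exists $\delta_N \goesto 0$ such that the computed $\widetilde{\M}_k$ satisfies the inexact-oracle criterion of \cite[Eq.~(2)]{jaggi_revisiting_2013},
\begin{equation*}
    \tr\bigl(\nabla\Phi(\M_k)\,\widetilde{\M}_k\bigr) \geq \max_{\M \in \mathcal{D}} \tr\bigl(\nabla\Phi(\M_k)\,\M\bigr) - \tfrac{1}{2}\eta_k\, C_\Phi\, \delta_N .
\end{equation*}
Plugging this oracle guarantee into \cite[Thm.~1]{jaggi_revisiting_2013} with $\eta_k = 2/(k+2)$ yields the $O(1/k)$ rate $\Phi(\M_\star) - \Phi(\M_k) \leq \frac{2\kappa(1+\delta_N)}{k+2}$ by the usual induction on $k$.

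The main obstacle will be quantifying $\delta_N$ rigorously and, crucially, uniformly in $k$: one must show that every iterate $\M_k$ remains in a uniformly well-conditioned subset of $\mathcal{D}$ so that the factorization and quadrature error bounds apply with constants independent of $k$. This in turn requires a smoothness estimate on $M_k(z)$ — effectively, control on the analyticity radius around $\TT$ — propagated through the convex combination update \eqref{eq:fw linear combination}. Once such uniform regularity is established, the remainder is a routine composition of known error rates for trapezoidal integration of analytic periodic functions and the Rino spectral factorization algorithm.
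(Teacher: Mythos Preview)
Your proposal is correct and mirrors the paper's own argument: both invoke \cite[Thm.~1]{jaggi_revisiting_2013} for Frank--Wolfe with an inexact linear oracle, identify $\kappa$ with the curvature constant of $\Phi$ over the $\BW$-ball, and attribute the additive $\delta_N$ term to the discretization and approximate spectral-factorization errors. You are, if anything, more careful than the paper, which simply asserts the inexact-oracle inequality $\tr(\nabla\Phi(\M_k)\widetilde{\M}_{k})\geq \sup_{\M}\tr(\nabla\Phi(\M_k)\M)-\delta_N$ and rewrites $\kappa$ via $\Phi(\M)=\tr(\M\,\nabla\Phi(\M))$ without working through the uniform-in-$k$ regularity you correctly flag as the main technical obstacle.
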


% \section{Approximating by State-Space Controllers for Time Domain Implementation} 
\section{Rational Approximation} \label{sec:state_space}

The preceding section determined that the optimal solution, denoted as $\NN_{\star}$, is non-rational and lacks a state-space representation. Nevertheless, \cref{alg:fixed_point_detailed} introduced in \cref{subsec:FW} can effectively approximate it in the frequency domain. Indeed, after convergence, the algorithm returns the optimal finite parameter, $\Gamma_\star$, which can be used to compute $N_\star(z)$ at \emph{any arbitrary frequency} using \cref{thm:fixed_point}, and thus $K_\star(z)$ (see \cref{alg:fixed_point_detailed} in \cref{app:detailed pseudocode}). However, a state-space controller must be devised for any practical real-time implementation.

This section introduces an efficient method to obtain state-space controllers approximating the non-rational optimal controller. Instead of directly approximating the controller itself, our method involves an initial step of \emph{approximating the power spectrum $N_\star(z)$ of the worst-case disturbance} to minimize the $\Hinf$-norm of the approximation error using positive rational functions. While problems involving rational function approximation generally do not admit a convex formulation, we show in \cref{thm:rational approx via feasibility} that approximating positive power spectra by a ratio of positive fixed order polynomials can be cast as a convex feasibility problem. After finding a rational approximation of $N_\star(z)$, we compute a state-space controller according to \eqref{eq:optimal K}. For the sake of simplicity, we focus on scalar disturbances, \ie, $d_w\=1$.

\subsection{State-Space Models from Rational Power Spectra}

As established in \cref{thm:kkt}, the derivation of a optimal controller $K_{\star}$ is achieved through the positive operator $\NN_{\star} = \L_{\star}^{\ast}\L_{\star}$ using the Wiener-Hopf technique. Specifically, we have $\K_{\star} =\K_{\Htwo} +  \Delta^{-1}\cl{  \{\Delta \K_\circ \}_{-} \L_\star}_{+} \L_\star^\inv \L_{\star}^\inv$. Since other controllers of interest, including $\Htwo$, $\Hinf$, and $\RO$, can all be formulated this way, we focus on obtaining approximations to positive power spectra.

It is worth noting that a positive and symmetric rational approximation $\widehat{N}(z)$ of order $m\in\N$ can be represented as a ratio $\widehat{N}(z) = P(z)/ Q(z)$ of two positive symmetric polynomials  $P(z)= p_0 + \sum_{k=1}^{m} p_k (z^k+z^{\-k})$, and $Q(z)= q_0 + \sum_{k=1}^{m} q_k (z^k+z^{\-k})$. When such $P(z),Q(z)$ exist, we can obtain a rational spectral factorization of $\widehat{N}(z)$ by obtaining spectral factorization for $P(z)$, and $Q(z)$.
 
Finally, we end this section by stating an exact characterization of positive trig. polynomials. While verifying the positivity condition for general functions might pose challenges, the convex cone of positive symmetric trigonometric polynomials, $\trig_{m,+}$, possess a characterization through a linear matrix inequality (LMI), as outlined below:
\begin{lemma}[{Trace parametrization of $\trig_{m,+}$ \citep[Thm. 2.3]{dumitrescu_positive_2017}}]\label{lem:positive_poly}
For $k\= [-m,m]$, let $\bm{\Theta}_k \in \R^{(m\+1)\timess (m\+1)}$  
be the primitive Toeplitz matrix with ones on the $k^{\text{th}}$ diagonal and zeros everywhere else. Then, $P(z)= p_0 + \sum_{k=1}^{m} p_k (z^{k}+z^{\-k}) >0 $ if and only if there exists a real positive definite matrix $\mathbf{P} \in \Sym^{m+1}_{+}$ such that
\vspace{-2mm}
\begin{equation} 
        p_k = \tr(\mathbf{P} \bm{\Theta}_k), \;\, k =0,\dots,m.
        \vspace{-3mm}
\end{equation}
\end{lemma}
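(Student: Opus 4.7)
The plan is to establish the equivalence via two separate arguments, with the analytic heavy lifting delegated to the Fej\'er--Riesz theorem.

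For the sufficiency direction (PSD matrix implies nonnegative polynomial), I would start from the spectral decomposition $\mathbf{P} = \sum_{i=0}^{m} \lambda_i \mathbf{v}_i \mathbf{v}_i^{\tp}$ with $\lambda_i \geq 0$ and $\mathbf{v}_i = (v_{i,0},\dots,v_{i,m})^{\tp}$, and associate to each eigenvector the polynomial $V_i(z) = \sum_{\ell=0}^{m} v_{i,\ell} z^{\ell}$. A direct computation gives $\mathbf{v}_i^{\tp} \bm{\Theta}_k \mathbf{v}_i = \sum_{\ell} v_{i,\ell} v_{i,\ell+k}$, which is precisely the $k$-th autocorrelation lag of $\{v_{i,\ell}\}$, equivalently the $k$-th Fourier coefficient of $|V_i(e^{j\omega})|^2$. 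Summing over $i$ with nonnegative weights yields $P(z) = \sum_{i} \lambda_i |V_i(z)|^2 \geq 0$ on $|z|=1$. When $\mathbf{P}$ is strictly positive definite, the $\mathbf{v}_i$ span $\R^{m+1}$, so no point $z_0 \in \TT$ can annihilate all $V_i$ simultaneously; hence $P(z) > 0$ everywhere on $\TT$.

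For the necessity direction (positive polynomial implies existence of $\mathbf{P}$), I would invoke the Fej\'er--Riesz theorem: any nonnegative trigonometric polynomial of degree $m$ factors as $P(z) = H(z) H(z^{-1})$ on $|z|=1$, where $H(z) = \sum_{k=0}^{m} h_k z^{k}$. Moreover, when $P(z) > 0$, the factor $H$ can be chosen to be zero-free on $\TT$. Setting $\mathbf{h} = (h_0, \dots, h_m)^{\tp}$ and $\mathbf{P} = \mathbf{h} \mathbf{h}^{\tp}$ yields a rank-one PSD matrix whose trace parametrization recovers
\begin{equation*}
    \tr(\mathbf{P}\bm{\Theta}_k) = \mathbf{h}^{\tp} \bm{\Theta}_k \mathbf{h} = \sum_{\ell} h_{\ell} h_{\ell+k} = p_k,
\end{equation*}
by direct expansion of $H(z) H(z^{-1})$ and comparison of Fourier coefficients on $\TT$.

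The main obstacle is the Fej\'er--Riesz factorization itself, which is the only nontrivial analytic ingredient. It can be proved by viewing $z^m P(z)$ as an ordinary polynomial of degree $2m$ whose roots appear in reciprocal pairs (due to the symmetry $P(z) = P(z^{-1})$ with real coefficients), and then collecting one root from each pair to assemble $H$. Strict positivity of $P$ on $\TT$ corresponds to $H$ having no roots on $\TT$, which further lets one promote $\mathbf{P}$ from merely PSD to strictly positive definite by a small full-rank perturbation. Everything else is bookkeeping: $\bm{\Theta}_k$ acts as a lag operator, so tracing it against an outer product implements discrete autocorrelation at lag $k$, making the identity $p_k = \tr(\mathbf{P}\bm{\Theta}_k)$ a direct Fourier-series translation of the sum-of-squares representation $P(z) = \sum_i \lambda_i |V_i(z)|^2$.
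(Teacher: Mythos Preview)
The paper does not prove this lemma; it is quoted as Theorem~2.3 from Dumitrescu's monograph on positive trigonometric polynomials, so there is no in-paper argument to compare against. Your approach---sufficiency via the spectral decomposition of $\mathbf{P}$ as a nonnegative sum of squares, and necessity via Fej\'er--Riesz factorization---is the standard route and matches what the cited reference does.

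One imprecision worth tightening: in the necessity direction you obtain the rank-one matrix $\mathbf{P}=\mathbf{h}\mathbf{h}^{\tp}$, which is only positive \emph{semi}definite, and then propose to ``promote'' it to strictly positive definite by a small full-rank perturbation. But an arbitrary full-rank perturbation of $\mathbf{h}\mathbf{h}^{\tp}$ will not preserve the trace constraints $p_k=\tr(\mathbf{P}\bm{\Theta}_k)$, so this step as written does not close. The clean fix is to exploit strict positivity at the outset: since $P(z)>0$ on the compact set $\TT$, there is $\epsilon>0$ with $P(z)-\epsilon>0$ as well; apply Fej\'er--Riesz to $P(z)-\epsilon$ to get $\mathbf{h}$, and then set $\mathbf{P}=\mathbf{h}\mathbf{h}^{\tp}+\tfrac{\epsilon}{m+1}I$. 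This is strictly positive definite and still satisfies the trace conditions, because the constant polynomial $\epsilon$ is represented by $\tfrac{\epsilon}{m+1}I$ (indeed $\tr(\bm{\Theta}_0)=m+1$ while $\tr(\bm{\Theta}_k)=0$ for $k\neq 0$). With this adjustment your argument is complete.
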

According to \cref{lem:positive_poly}, any positive trig. polynomial of order at most $m$ can be expressed (non-uniquely) as $P(z) \= \suml_{k=-r}^{r} \tr(\mathbf{P} \bm{\Theta}_k) z^{\-1} \= \tr\pr{\mathbf{P}\bm{\Theta}(z)}$. Here, $\bm{\Theta}(z)\defeq \sum_{k=-r}^{r} \bm{\Theta}_k z^{\-1}$.
\vspace{-2mm}

\subsection{Rational Approximation using $\Hinf$-norm}
\label{sec:rational_approx}
In this context, we present a novel and efficient approach for deriving rational approximations of non-rational power spectra. Our method bears similarities to the flexible uniform rational approximation approach described in \cite{sharon2021flexible}, which approximates a function with a rational form while imposing the positivity of the denominator of the rational form as a constraint. Our method uses $\Hinf$-norm as criteria to address the approximation error effectively. First, consider the following problem:
\begin{problem}[Rational approximation via $H_\infty$-norm minimization] \label{prob:hinf_rational_opt}
Given a positive spectrum $\NN$, find the best rational approximation of order at most $m\in\N$ with respect to $H_\infty$ norm, \ie,
\vspace{-2mm}
\begin{equation}\label{eq:hinf_rational_opt}
\vspace{-5mm}
    \inf_{\PP, \QQ \in \trig_{m,+}} \Norm[\op]{{\PP/\QQ} - \clf{N} } \subjto \tr(\QQ)=1
\end{equation}
\end{problem}
Note that the constraint $\tr(\QQ)\=1$, equivalent to $q_0\=1$, eliminates redundancy in the problem since the fraction ${\PP}/{\QQ}$ is scale invariant.

\begin{figure}[ht]
	\centering
	\begin{subfigure}[b]{0.9\columnwidth}
		\centering
		\includegraphics[width=0.8\columnwidth]{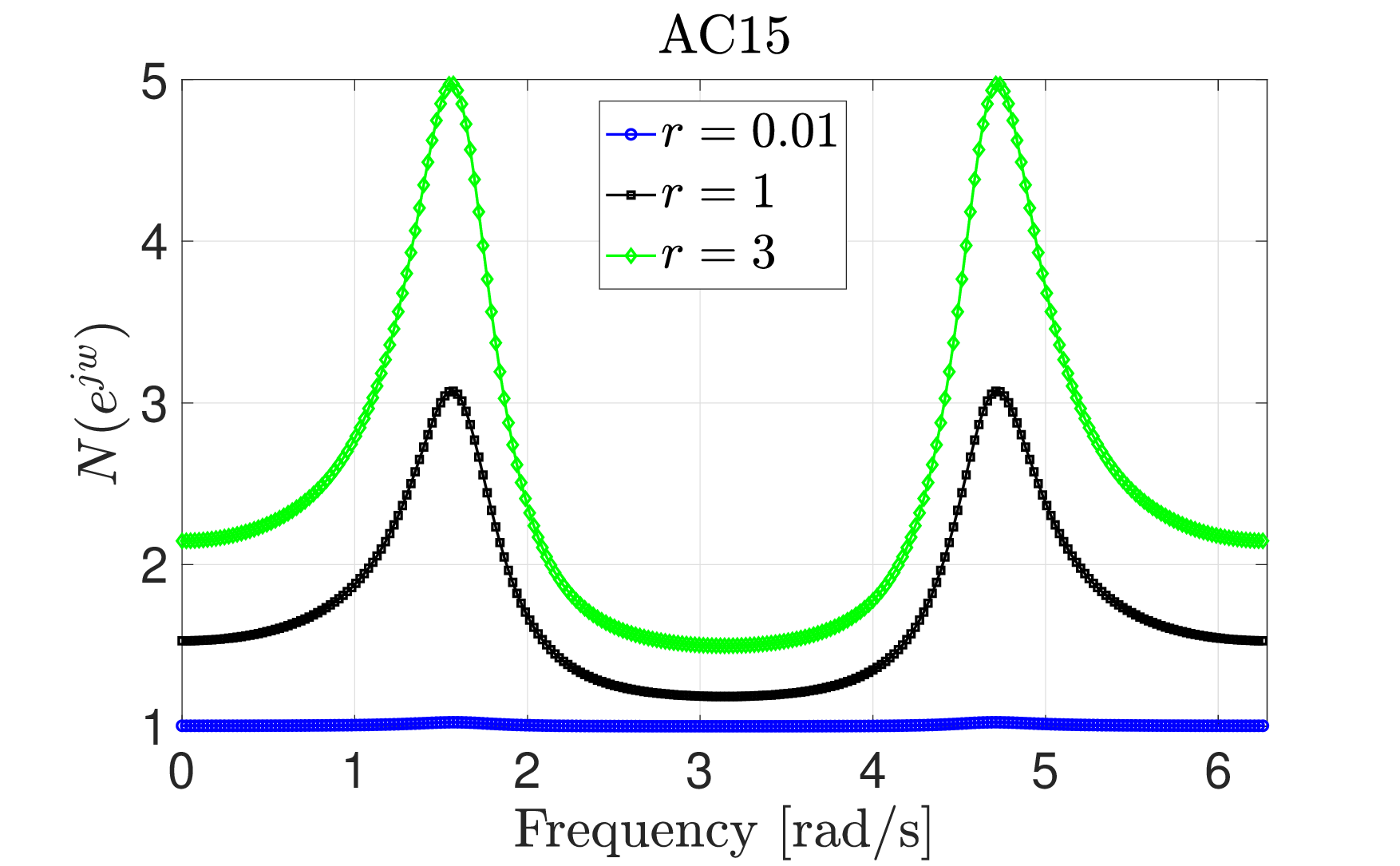}
		\caption{The frequency domain representation of $\mathcal{N}$ for $r=0.01, 1, 3$ for system [AC15].}
		\label{fig:fig_m_freq_1}
	\end{subfigure}
	\vfill
	\begin{subfigure}[b]{0.9\columnwidth}
		\centering
		\includegraphics[width=0.8\columnwidth]{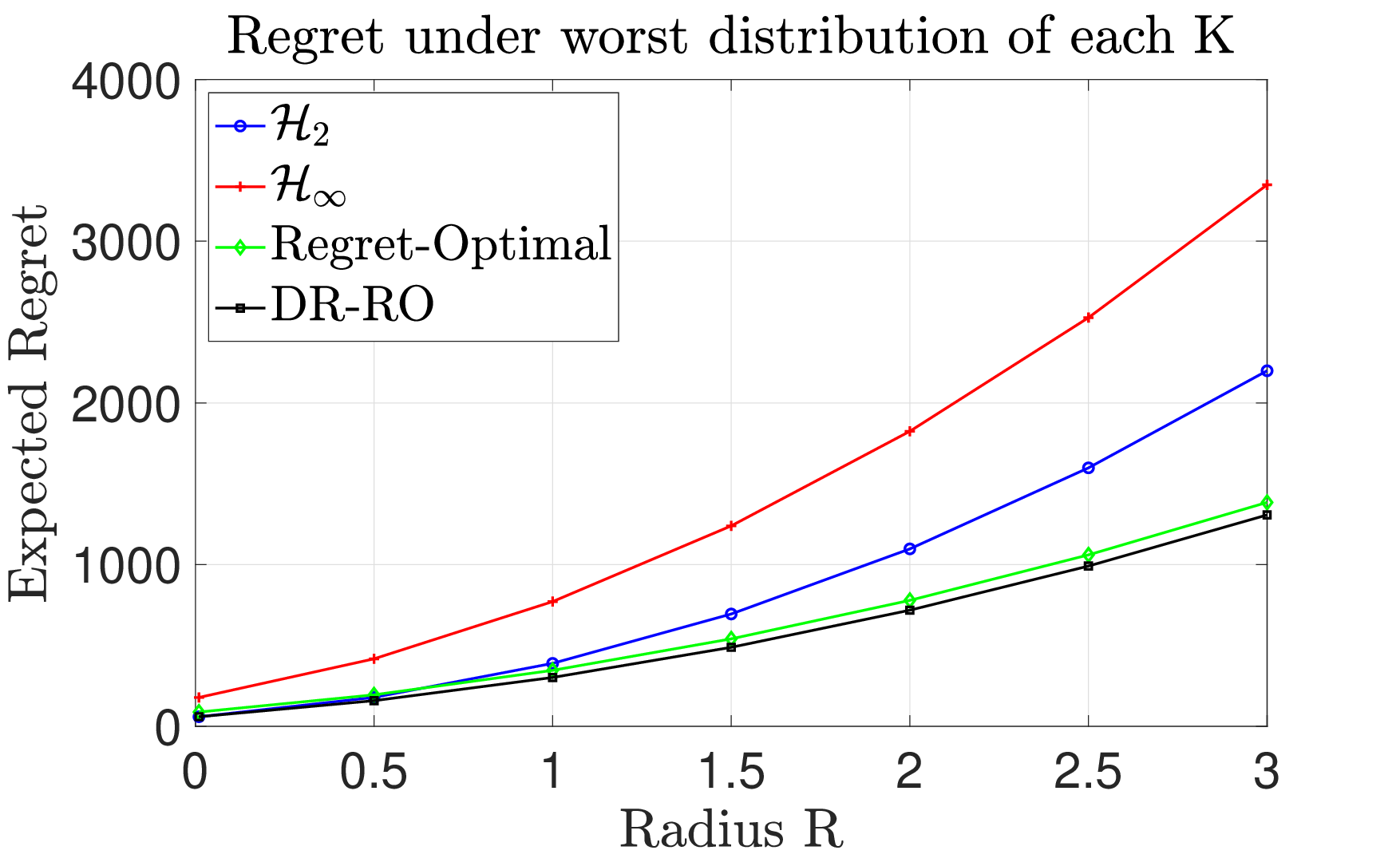}
		\caption{The worst-case expected regret of different controllers for the system [AC15].}
		\label{fig:ER}
	\end{subfigure}
	\caption{Variation of $\mathcal{N}$ with $r$ and the performance of the \DRO~controller versus the $\mathit{H}_2, \mathit{H}_{\infty}$, and $RO$ controller.}
	\label{fig:teaser}
 \vspace{-5mm}
\end{figure}

While the objective function in \cref{eq:hinf_rational_opt} is convex with respect to $\PP$ and $\QQ$ individually, \emph{it is not jointly convex in $(\PP,\QQ)$}. In this form, \cref{prob:hinf_rational_opt} is not amenable to standard convex optimization tools. 

To circumvent this issue, we instead consider the sublevel sets of the objective function in \cref{eq:hinf_rational_opt}.
\begin{definition}\label{def:sublevel}
    For a given $\epsilon>0$ approximation bound, the $\epsilon$-sublevel set of \cref{prob:hinf_rational_opt} is defined as
    \begin{equation*}
    \vspace{-3mm}
    \sub_\epsilon \!\defeq\! \cl{(\PP,\QQ)  \,\left|\; \Norm[\op]{{\PP/\QQ} \- \clf{N} } \!\!\leq\! \epsilon, \; \tr(\QQ)\=1 \right.}. 
\end{equation*}
\end{definition}
By applying the definition of $\Hinf$-norm, we have that
\begin{align}
    &\Norm[\op]{{\PP /\QQ} \- \clf{N} } \!\!\= \max_{z\in \TT} \Abs{{P(z)/Q(z)} - {N(z)} } \leq \epsilon \nonumber\\
    & \iff  \left\{
    \begin{aligned}
    &P(z) \- \pr{ N(z)  \+ \epsilon } {Q(z)}\leqq 0, \\
    &{P(z)} \- \pr{ N(z) \- \epsilon } {Q(z)}\geqq 0,
    \end{aligned}\right.\label{eq:hinf_affine}
\end{align}
where the last set of inequalities hold for all $z\in \TT$. Notice that the inequalities in \cref{eq:hinf_affine} and the positivity constraints on $\PP,\QQ$ are jointly affine in $(\PP,\QQ)$. Moreover, the equation $\tr(\QQ)=1$ is an affine equality constraint. Therefore, we have the following claim.
\begin{lemma}\label{thm:convex_sublevel_set}
The set $\sub_\epsilon$ is jointly convex in $(\PP,\QQ)$.
\end{lemma}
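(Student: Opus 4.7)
The plan is to verify that each defining constraint of $\sub_\epsilon$ is convex in $(\PP,\QQ)$ jointly and conclude by taking intersections. The excerpt has already done most of the analytic work in \eqref{eq:hinf_affine}; what remains is a clean convexity bookkeeping argument, so I would organize the proof as three independent checks (the $\Hinf$ inequality, the trace normalization, and the positivity of $\PP,\QQ$ sitting in $\trig_{m,+}$), each producing a convex set in the ambient coefficient space $\R^{2(m+1)}$.

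First I would treat the $\Hinf$ constraint. Using that $Q(z)>0$ on $\TT$ (which is ensured by $\QQ \in \trig_{m,+}$ together with the normalization below), the inequality $\norm[\op]{\PP/\QQ - \clf{N}}\leq \epsilon$ is equivalent, as the excerpt derives, to the pointwise pair
\begin{align*}
P(z) - \pr{N(z)+\epsilon} Q(z) &\leq 0, \\
P(z) - \pr{N(z)-\epsilon} Q(z) &\geq 0,
\end{align*}
holding for every $z\in\TT$. For each fixed $z$, both inequalities are \emph{linear} in the coefficient vector $(p_0,\dots,p_m,q_0,\dots,q_m)$, because evaluation at $z$ is a linear functional and $N(z)\pm\epsilon$ does not depend on $(\PP,\QQ)$. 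Hence each defines a closed affine half-space in $\R^{2(m+1)}$, and the intersection over all $z\in\TT$ remains closed and convex.

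Second, I would handle the remaining constraints. The equality $\tr(\QQ)=1$ is equivalent to $q_0=1$, a single affine equality, hence a hyperplane. The positivity constraints $\PP,\QQ\in\trig_{m,+}$ are convex by \cref{lem:positive_poly}: that result writes $\trig_{m,+}$ as the image of the convex cone $\Sym_+^{m+1}$ under the linear trace parametrization $\mathbf{P}\mapsto \pr{\tr(\mathbf{P}\bm{\Theta}_k)}_{k=0}^{m}$, so $\trig_{m,+}$ is a convex cone and $\trig_{m,+}\times\trig_{m,+}$ is a convex set. The set $\sub_\epsilon$ is then the intersection of (i) the two families of affine half-spaces from the previous step, (ii) the affine hyperplane $\{q_0=1\}$, and (iii) the convex cone $\trig_{m,+}\times\trig_{m,+}$, and convexity is preserved under arbitrary intersections.

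The only subtlety, and the only real obstacle, is that the linearization in step one relies on $Q(z)>0$ pointwise on $\TT$; otherwise, clearing the denominator in $|P/Q-N|\leq\epsilon$ would flip inequalities on the region where $Q<0$, and the resulting constraint would cease to be affine in $(\PP,\QQ)$. Membership of $\QQ$ in $\trig_{m,+}$ combined with $\tr(\QQ)=1$ rules this out (on any point where $Q$ vanished, the original constraint would be ill-posed anyway, so such points can be excluded without shrinking $\sub_\epsilon$). With that caveat resolved, the proof is just the standard fact that intersections and products of convex sets are convex.
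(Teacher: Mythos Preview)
Your proposal is correct and follows essentially the same approach as the paper: the paper observes, in the text immediately preceding the lemma, that the inequalities in \eqref{eq:hinf_affine} are affine in $(\PP,\QQ)$, that the positivity constraints and the trace normalization are convex/affine, and concludes convexity of $\sub_\epsilon$ as an intersection. Your write-up is more careful in spelling out the role of $Q(z)>0$ and in invoking \cref{lem:positive_poly} for the positivity cone, but the underlying argument is identical.
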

\vspace{-2mm}
Unlike its non-convex optimization counterpart in \cref{prob:hinf_rational_opt}, a membership oracle for the convex set $\sub_\epsilon$ offers a means to obtain accurate rational approximations for non-rational functions. According to \cref{lem:positive_poly}, the positive trig. polynomials $(\PP, \QQ)\in \sub_\epsilon$ can be parameterized by psd matrices $\mathbf{P}$ and $\mathbf{Q}$. This allows the equality constraint $\tr(\QQ)$ and the affine inequalities in \eqref{eq:hinf_affine} to be expressed as Linear Matrix Inequalities (LMIs) in terms of $\mathbf{P}$ and $\mathbf{Q}$. The resulting theorem characterizes the $\epsilon$-sublevel sets.

\begin{theorem}[Feasibility of $\sub_{\epsilon}$]\label{thm:rational approx via feasibility}
    Let $\epsilon\>0$ be a given accuracy level, and $m\in \N$ is a fixed order. The trig. polynomials $\PP$ and $\QQ$ of order $m$ belong to the $\epsilon$-sublevel set, $(\PP,\QQ) \in \sub_\epsilon$ if and only if there exists $\mathbf{P},\mathbf{Q} \in \Sym_{+}^{m +1}$ such that $\tr\pr{\mathbf{Q}} = 1$ and for all $z\in \TT$, .
\begin{align}
\vspace{-3mm}
    1)\,&\tr\pr{\mathbf{P} \bm{\Theta}(z)} \- \pr{ N(z)  \+ \epsilon } \tr\pr{\mathbf{Q} \bm{\Theta}(z)}\leqq 0, \\
    2)\,&\tr\pr{\mathbf{P} \bm{\Theta}(z)} \- \pr{ N(z)  \- \epsilon } \tr\pr{\mathbf{Q} \bm{\Theta}(z)}\geqq 0.
    \vspace{-6mm}
\end{align}
\end{theorem}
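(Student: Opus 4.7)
\textbf{Proof Plan for \cref{thm:rational approx via feasibility}.}
The plan is to combine two ingredients that have already been established in the excerpt: (i) the algebraic unfolding of the $H_\infty$-norm bound $\Norm[\op]{\PP/\QQ - \clf{N}} \leq \epsilon$ into the two pointwise affine inequalities in \eqref{eq:hinf_affine}, and (ii) the trace parametrization of positive symmetric trigonometric polynomials from \cref{lem:positive_poly}. The theorem is essentially a translation of the membership condition for $\sub_\epsilon$ into a coordinate system where positivity of $\PP,\QQ$ is encoded by PSD matrices, so the proof is a chain of equivalences rather than an estimation argument.

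First, I would recall the definition of $\sub_\epsilon$ and invoke the derivation leading to \eqref{eq:hinf_affine}: for $(\PP,\QQ)$ with $\QQ \psdg 0$ on $\TT$, the condition $\max_{z \in \TT}|P(z)/Q(z) - N(z)| \leq \epsilon$ is equivalent (since $Q(z) > 0$ on $\TT$) to the pair of pointwise inequalities
\begin{align*}
P(z) - (N(z)+\epsilon)Q(z) &\leq 0,\\
P(z) - (N(z)-\epsilon)Q(z) &\geq 0,
\end{align*}
holding for all $z \in \TT$, together with the normalization $\tr(\QQ)=1$. This step is routine once we observe that $\QQ \in \trig_{m,+}$ forces $Q(z)>0$ uniformly on $\TT$, which permits clearing the denominator without flipping any inequalities.

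Next, I would apply \cref{lem:positive_poly}: the cone $\trig_{m,+}$ is parametrized exactly by $\{\tr(\mathbf{P}\,\bm{\Theta}(z)) : \mathbf{P} \in \Sym^{m+1}_+\}$, where $\bm{\Theta}(z) = \sum_{k=-m}^{m} \bm{\Theta}_k z^{-k}$. Thus, $\PP \in \trig_{m,+}$ if and only if there exists $\mathbf{P} \psdgeq 0$ such that $P(z) = \tr(\mathbf{P}\,\bm{\Theta}(z))$ for all $z \in \TT$, and analogously $\QQ \in \trig_{m,+}$ iff $Q(z)=\tr(\mathbf{Q}\,\bm{\Theta}(z))$ for some $\mathbf{Q} \psdgeq 0$. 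Since $\bm{\Theta}_0 = I_{m+1}$, the coefficient $q_0$ equals $\tr(\mathbf{Q})$, so the normalization $\tr(\QQ)=1$ is equivalent to $\tr(\mathbf{Q})=1$.

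Finally, I would substitute the expressions $P(z)=\tr(\mathbf{P}\,\bm{\Theta}(z))$ and $Q(z)=\tr(\mathbf{Q}\,\bm{\Theta}(z))$ into the two pointwise inequalities from \eqref{eq:hinf_affine} and the normalization constraint to obtain precisely conditions (1) and (2) of the theorem. The equivalence runs both ways by the one-to-one correspondence (up to non-uniqueness of the Gram representation, which is immaterial for feasibility) between $\trig_{m,+}$ and $\Sym^{m+1}_+$, so the $\epsilon$-sublevel set $\sub_\epsilon$ is exactly the feasibility region of the stated matrix inequalities. The main subtlety to be careful about is the clearing-denominator step: one must verify that $Q(z)>0$ on $\TT$ is guaranteed purely by $\QQ \in \trig_{m,+}$ (which is the standing assumption embedded in $\sub_\epsilon$, since we only consider $\PP,\QQ \in \trig_{m,+}$), otherwise the direction of the inequalities in \eqref{eq:hinf_affine} would not be preserved. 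No further technical obstacle arises; the result is a clean LMI-style reformulation suitable for semidefinite programming.
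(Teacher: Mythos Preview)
Your proposal is correct and follows essentially the same approach as the paper: the paper's argument (given in the text immediately preceding the theorem rather than in a separate proof environment) is precisely to unfold the $H_\infty$ bound into the two pointwise affine inequalities \eqref{eq:hinf_affine}, invoke the trace parametrization of $\trig_{m,+}$ from \cref{lem:positive_poly}, and substitute $P(z)=\tr(\mathbf{P}\,\bm{\Theta}(z))$, $Q(z)=\tr(\mathbf{Q}\,\bm{\Theta}(z))$ together with the identification $q_0=\tr(\mathbf{Q})$. Your care about the sign of $Q(z)$ when clearing the denominator is the only subtlety, and it is handled exactly as you indicate.
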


\begin{figure}[ht]
    \centering
    \vspace{-0.2cm}
    \begin{subfigure}[b]{0.9\columnwidth}
        \centering
        \includegraphics[width=0.8\columnwidth]{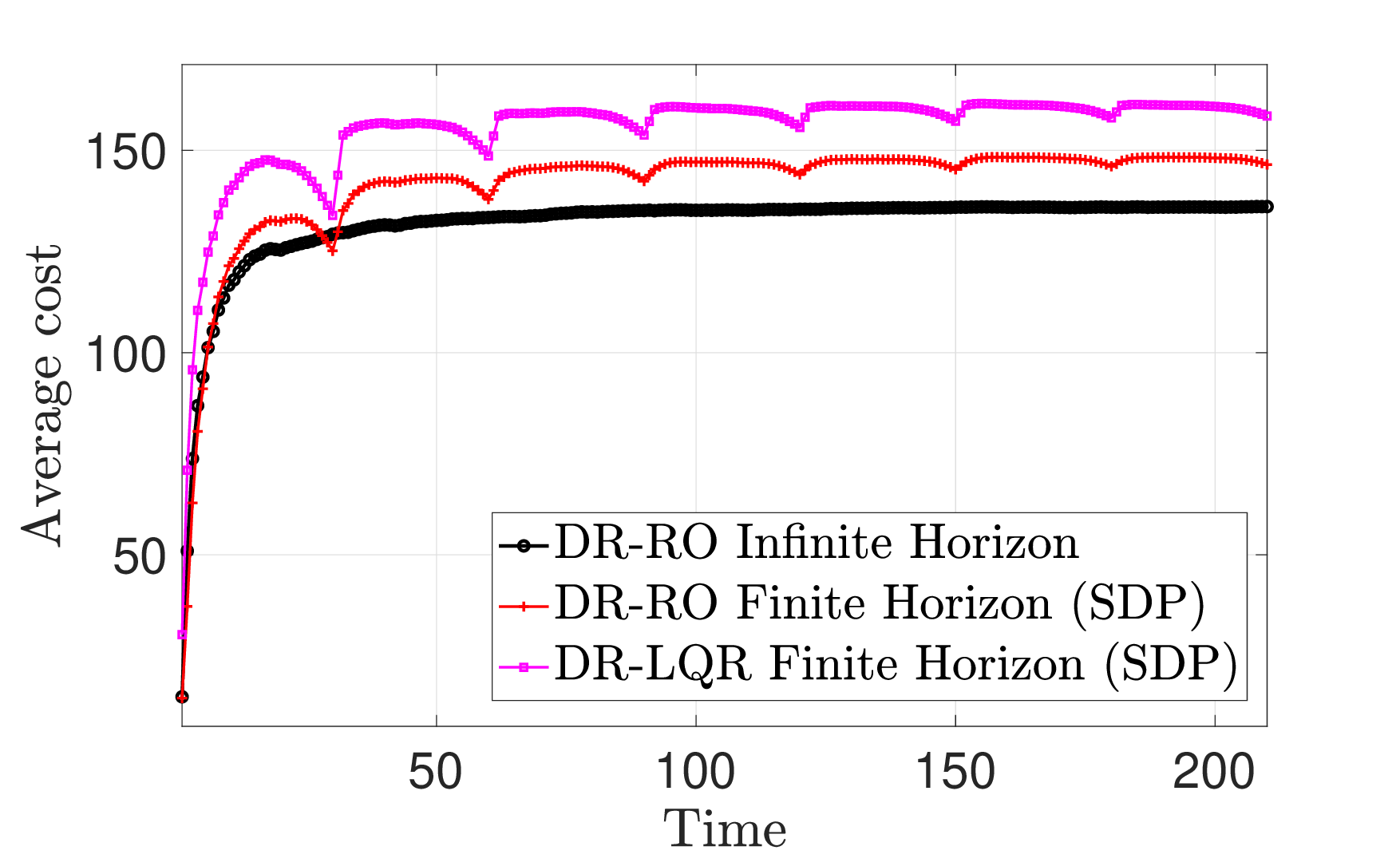}
        \caption{White noise}
        \label{fig:figa}
    \end{subfigure}
    \hfill
    \begin{subfigure}[b]{0.9\columnwidth}
        \centering
        \includegraphics[width=0.8\columnwidth]{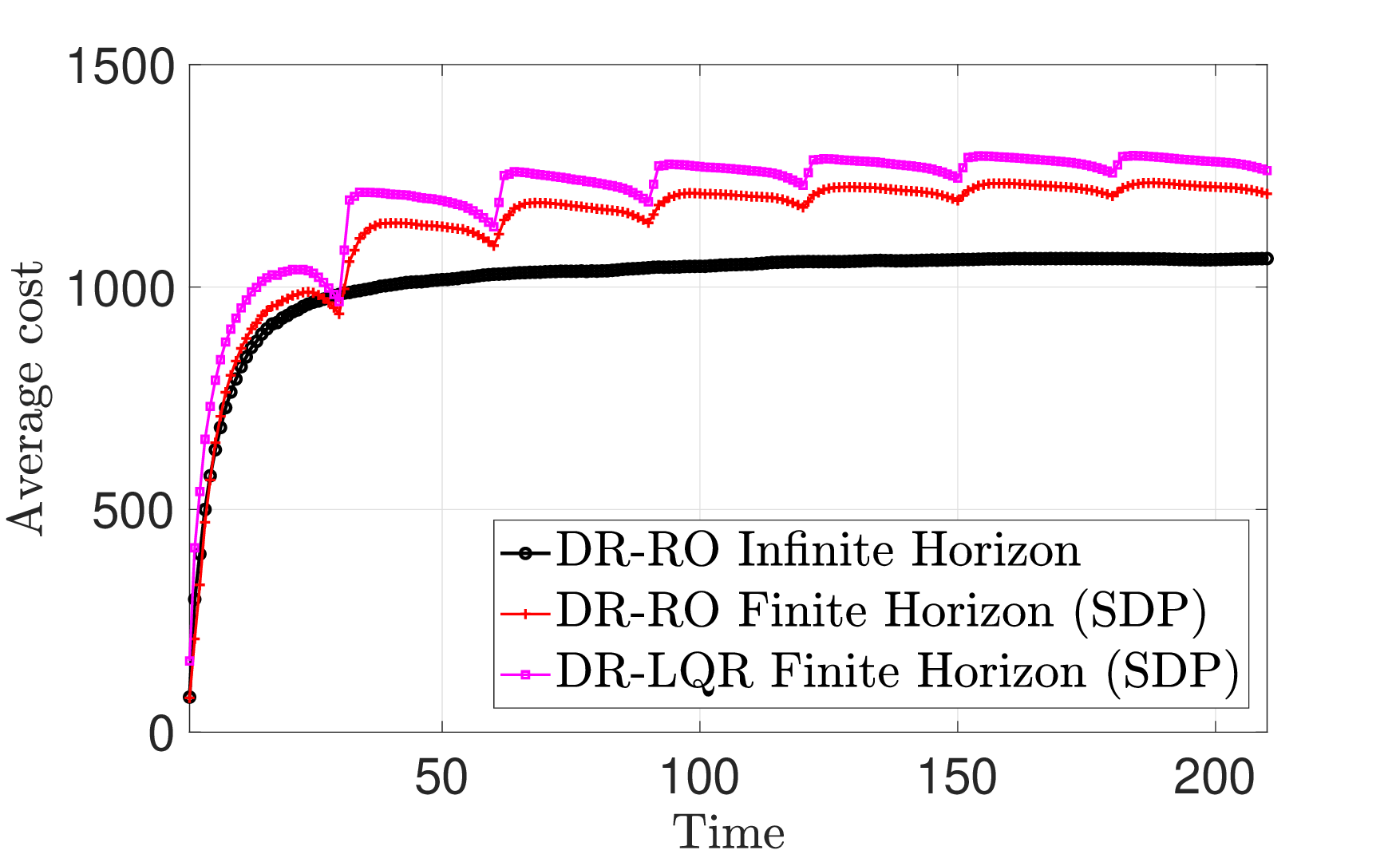}
        \caption{Worst disturbance for \DRO, infinite horizon}
        \label{fig:figb}
    \end{subfigure}

    %\vspace{-0.2cm}
    \caption{The control costs of different DR controllers under (a) white noise and (b) worst disturbance for \DRO~in infinite horizon, for system [AC15]. The finite-horizon controllers are re-applied every $s=30$ steps. The infinite horizon \DRO~controller achieves the lowest average cost compared to the finite-horizon controllers.}
    \label{fig:time_domain_1}
    %\vspace{-5mm}
\end{figure}
    \vskip\baselineskip
\begin{figure}[ht]
    \centering
    %\vspace{-0.2cm}
    \begin{subfigure}[b]{0.9\columnwidth}
        \centering
        \includegraphics[width=0.8\columnwidth]{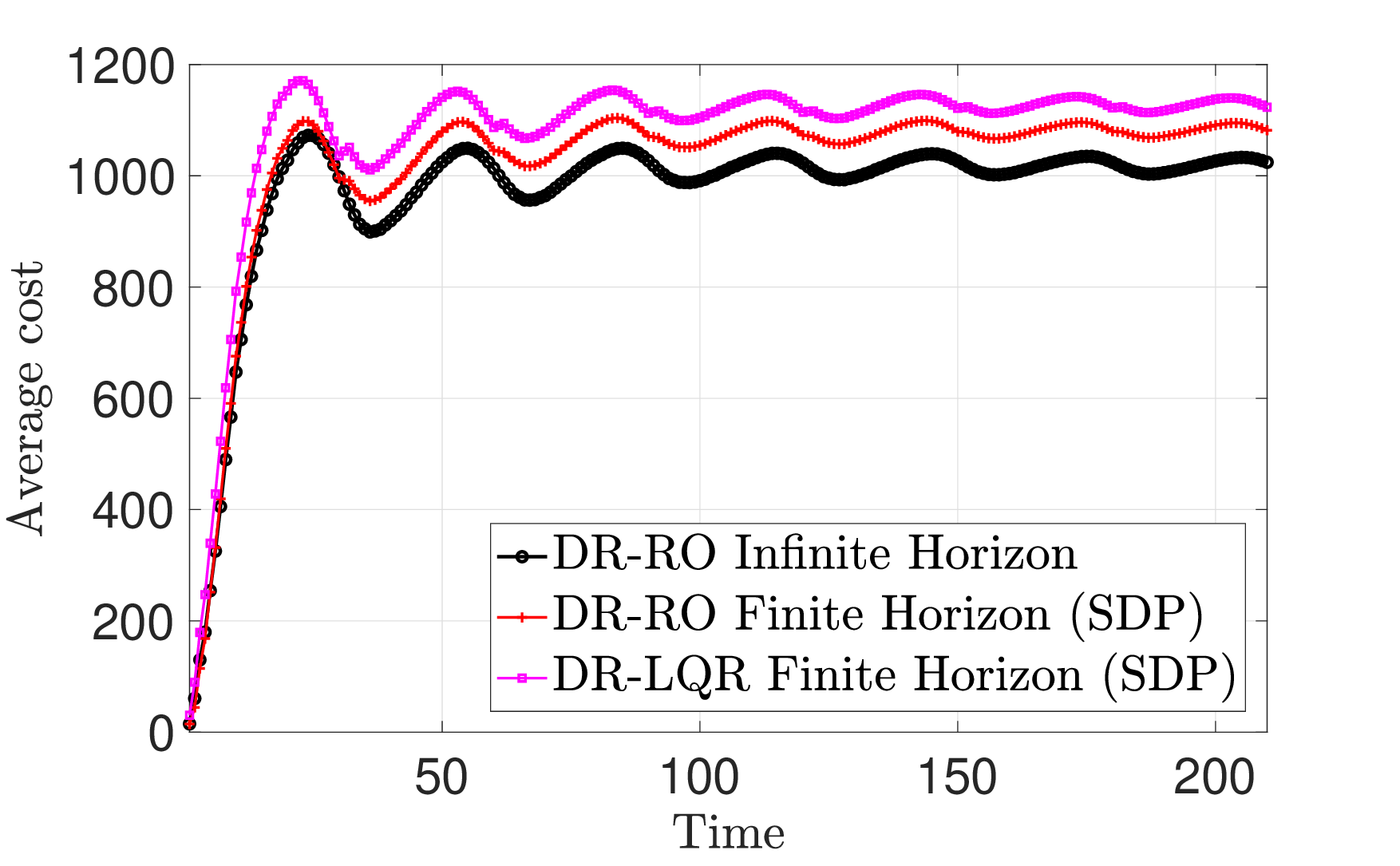}
        \caption{Worst disturbance for \DRO, finite horizon}
        \label{fig:figc}
    \end{subfigure}
    \hfill
    \begin{subfigure}[b]{0.9\columnwidth}
        \centering
        \includegraphics[width=0.8\columnwidth]{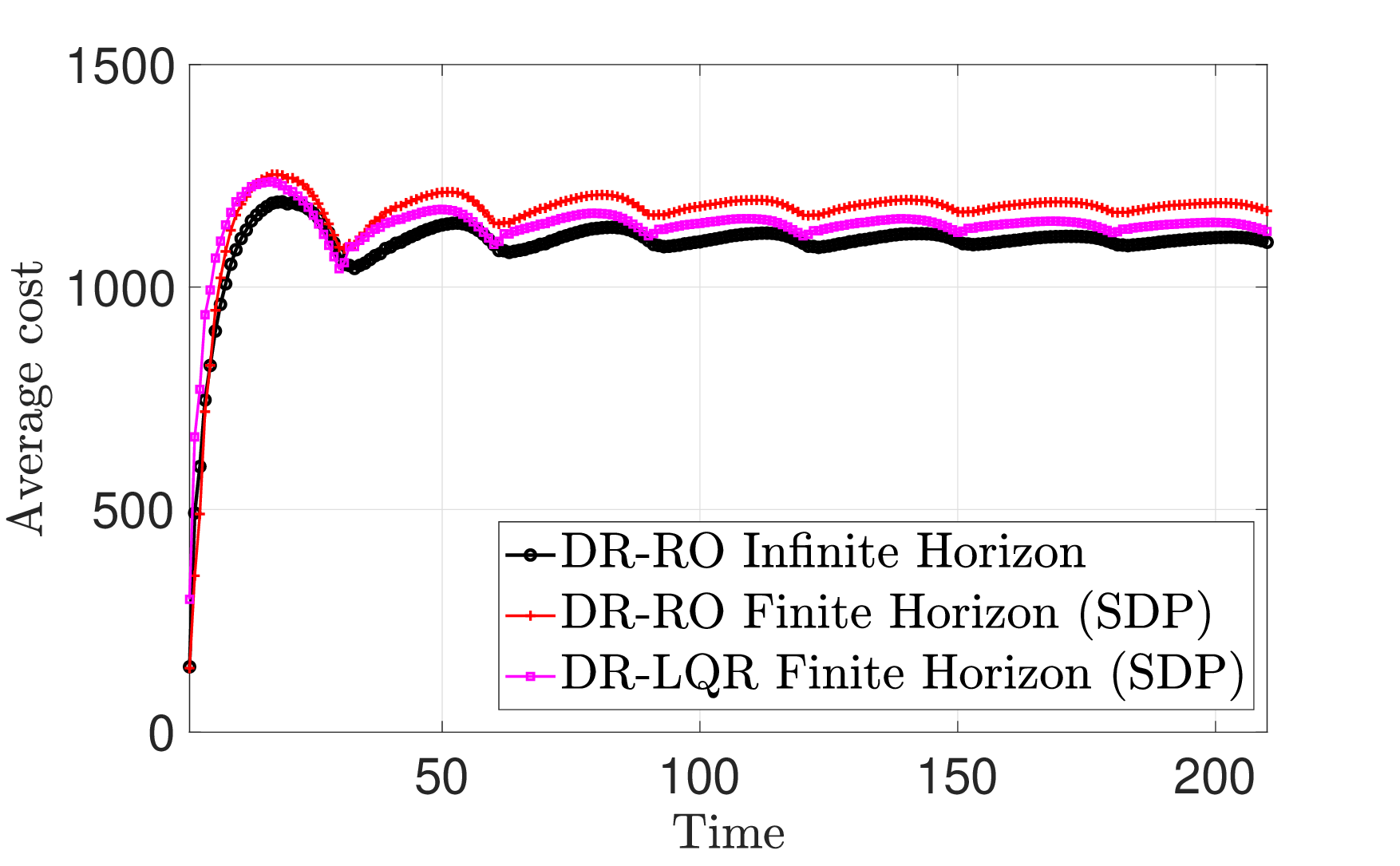} 
        \caption{Worst disturbance for DR-LQR, finite horizon}
        \label{fig:figd}
    \end{subfigure}

    %\vspace{-0.2cm}
    \caption{The control costs of different DR controllers under (a) worst disturbances for \DRO~in finite horizon and (b) worst disturbances for DR-LQR in finite horizon, for system [AC15]. The finite-horizon controllers are re-applied every $s=30$ steps. Despite being designed to minimize the cost under specific disturbances, the finite horizon DR controllers are outperformed by the infinite horizon \DRO~controller.}
    \label{fig:time_domain_2}
    \vspace{-5mm}
\end{figure}

\vspace{-7mm}
The sole limitation in this approach arises from the fact that for an non-rational $N(z)$, the set of infinitely many inequalities in \eqref{eq:hinf_affine} cannot be precisely characterized by a finite number of constraints, as seen in the trace parametrization of positive polynomials. To overcome this challenge, one can address the inequalities in \eqref{eq:hinf_affine} solely for a finite set of frequencies, such as $\TT_N = \{\e^{j 2\pi n /N} \mid n = 0, \dots, N-1\}$ for $N\gg m$. While this introduces an approximation, the method's accuracy can be enhanced arbitrarily by increasing the frequency samples. By taking this approach, the problem of rational function approximation can be reformulated as a convex feasibility problem involving LMIs and a finite number of affine (in)equality constraints.

It is crucial to note our algorithm can be used in the following two modes. These operational modes highlight the algorithm's adaptability for the given two use cases.

\vspace{-4mm}
\begin{enumerate}
    \item \textbf{Best Precision for a given degree} By adjusting the parameter $\epsilon$, which signifies our tolerance for deviations from $M(e^{jw})$, we can refine the approximation's accuracy. This method is particularly valuable when we need to find the best possible polynomial representation of $M(e^{jw})$ for a given degree.
    \vspace{-2mm}
    \item \textbf{Lowest Degree for a given precision} In contrast, we can ask for the lowest degree polynomial which achieves a certain precision level $\epsilon$. This mode is advantageous when the priority is to minimize computational overhead or when we need a simpler polynomial approximation, as long as the approximation remains within acceptable accuracy bounds
    \vspace{-3mm}
\end{enumerate}

\subsection{Obtaining State-Space Controllers} \label{app:from L to K in state space params}

Note that given the polynomial z-spectra, we require its spectral factorization to obtain the state-space controller that approximates the \DRO~controller. The following Lemma introduces a simple way to obtain such an approximation

\begin{lemma}[{Canonical factor of polynomial z-spectra \citep[Lem. 1]{sayed_survey_2001}}]\label{lem:spect}
    Consider a Laurent polynomial of degree $m$, $P(z)=\sum_{k=-m}^{m} p_k z^{-k}$, with $p_k = p_{-k} \in \R$, such that $P(z)> 0$. Then, there exists a canonical factor $L(z)= \sum_{k=0}^{m} \ell_k z^{-k}$ such that $P(z) = \abs{L(z)}^2$ and $L(z)$ has all of its root in $\TT$.
\end{lemma}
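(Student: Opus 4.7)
The statement is the classical Fej\'er--Riesz spectral factorization theorem for positive Laurent polynomials; my plan is to carry out its standard root-pairing proof. I read ``roots in $\TT$'' as ``roots in the open unit disk'', since that is the condition that makes both $L$ and $L^{-1}$ causal and thus $L$ the \emph{canonical} spectral factor.

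First I would lift $P$ to a polynomial. Set $Q(z) \defeq z^m P(z) = \sum_{j=0}^{2m} p_{m-j}\, z^j$, a polynomial of degree $2m$ in $z$. The palindromic hypothesis $p_k=p_{-k}$ becomes the reciprocal symmetry $Q(z) = z^{2m} Q(1/z)$ on $\C\setminus\{0\}$, and the strict positivity $P>0$ on $\TT$ forces $Q$ to have no zeros on $\TT$. Reciprocal symmetry pairs each nonzero root $\alpha$ of $Q$ with $1/\alpha$, while reality of coefficients pairs $\alpha \leftrightarrow \bar\alpha$; generic non-real roots therefore arise in quadruples $\{\alpha,\bar\alpha,1/\alpha,1/\bar\alpha\}$. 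The $2m$ roots (with multiplicity) then split cleanly into $m$ inner roots $\{\alpha_j\}_{j=1}^m$ with $|\alpha_j|<1$, and $m$ outer reciprocal images, with the inner set closed under complex conjugation.

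Next I would assemble the candidate factor. Put $L(z) \defeq c\,\prod_{j=1}^m(1-\alpha_j z^{-1}) = \sum_{k=0}^m \ell_k z^{-k}$ for a real constant $c$ to be fixed. Conjugate closure of $\{\alpha_j\}$ makes the $\ell_k$ real, and by construction every zero of $L$ lies strictly inside $\TT$. On $\TT$, using $\bar z=1/z$ and the real coefficients, $|L(z)|^2 = L(z)L(1/z)$, which a direct expansion rewrites as $c^2 z^{-m}\prod_j (z-\alpha_j)(1-\alpha_j z)$. Meanwhile $P(z) = z^{-m}Q(z) = p_m\,z^{-m}\prod_j(z-\alpha_j)(z-1/\alpha_j)$, and the identity $z-1/\alpha_j = -\alpha_j^{-1}(1-\alpha_j z)$ brings the two expressions into the same factored form up to a single scalar.

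The delicate step --- the main obstacle --- is fixing $c$ so that $|L|^2 = P$ holds identically on $\TT$, not merely up to a constant. Since both $L(z)L(1/z)$ and $P(z)$ are Laurent polynomials of the same degree whose zero sets in $\C^*$ are both $\{\alpha_j,1/\alpha_j\}_{j=1}^m$ with matching multiplicities, their ratio is a nonzero constant. Matching at $z=1\in\TT$, where $P(1)>0$ and $|L(1)|^2 = c^2\prod_j|1-\alpha_j|^2$, forces $c^2 = P(1)/\prod_j|1-\alpha_j|^2 > 0$, so a real $c$ exists. With this choice, $L$ is the required canonical factor of degree $m$ with all zeros inside $\TT$, completing the proof.
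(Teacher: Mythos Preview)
Your proof is correct: this is the standard Fej\'er--Riesz root-pairing argument, and each step (lifting to the reciprocal polynomial $Q$, the inner/outer splitting of roots via the symmetry $Q(z)=z^{2m}Q(1/z)$ together with the absence of unimodular roots, conjugate closure of the inner roots, and the constant match at $z=1$) is sound. Your reading of ``roots in $\TT$'' as ``roots in the open unit disk'' is also right; that is what makes $L$ and $L^{-1}$ causal and hence canonical, and the paper's phrasing is a minor slip.

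As for comparison with the paper: there is nothing to compare. The paper does not prove \cref{lem:spect}; it quotes the result from \cite[Lem.~1]{sayed_survey_2001} and uses it as a black box to extract rational spectral factors of $P$ and $Q$ in the approximation $\widehat N=P/Q$. Your argument is precisely the classical proof one would find in that reference or any text treating spectral factorization of polynomial spectra, so it fills the gap the paper deliberately leaves.

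One small point worth making explicit in your write-up: the split into exactly $m$ inner and $m$ outer roots uses that $Q$ has degree \emph{exactly} $2m$ and that $Q(0)\neq 0$, both of which follow from $p_m=p_{-m}\neq 0$ (implied by ``$P$ has degree $m$''). You implicitly rely on this when counting roots, so stating it once removes any ambiguity.
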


\vspace{-2mm}
Using \cref{lem:spect}, we can compute spectral factors by factorizing the symmetric positive polynomials and multiplying all the factors with stable roots together. Consequently, this rational spectral factor enables the derivation of a rational controller, denoted as $K(z)$ (refer to \cref{app:from L to K in state space params}).

Now we present the \DRO~controller in state-space form.

\begin{lemma}\label{lemma:state space K}
 Let $\Tilde L(z)$ %$\Tilde L(z)=(I+\Tilde{C} (z I -\Tilde{A})^{-1}\Tilde{B})\Tilde{D}^{1/2}$ 
 be the rational factor of the spectral factorization $\Tilde N(z)=\Tilde L(z)^\ast \Tilde L(z) \= P(z)/Q(z)$  of a degree $m$ rational approximation $P(z)/Q(z)$. The controller obtained from $\Tilde L(z)$ using \eqref{eq:optimal K and M}, \ie,  $K(z) \= K_{\Htwo}(z) \+  \Delta(z)^{-1}\cl{  \{\Delta(z) K_\circ(z) \}_{-} \Tilde L(z)}_{+} \Tilde L(z)^\inv$ is rational and can be realized as a state-space controller as follows: 
\vspace{-2mm}
    \begin{equation}\label{eq:RationalK}
        %\begin{aligned}
        e(t+1)=\widetilde{F}e(t)+\widetilde{G}w(t), \quad u(t)=\widetilde{H}e(t)+ \widetilde{J}w(t)) 
    %\end{aligned}
        \vspace{-2mm}
    \end{equation}
    where $e_t$ is the controller state, and $(\widetilde{F},\widetilde{G},\widetilde{H},\widetilde{J})$ are determined from $(A,B_u,B_w)$ and $\Tilde L(z)$.
\end{lemma}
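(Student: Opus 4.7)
The plan is to verify rationality piece-by-piece, use standard state-space composition rules to assemble a realization, and finally identify the matrices $(\widetilde{F},\widetilde{G},\widetilde{H},\widetilde{J})$. Concretely, I would carry out the argument in the following order.

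First, I would establish that every factor appearing in the expression for $K(z)$ is a rational transfer function. Since $(A,B_u,B_w,C)$ are finite-dimensional, $F(z)$ and $G(z)$ are rational and strictly causal, so the Popov-type spectral factor $\Delta(z)$ of $I+F(z)^\ast F(z)$ is rational with a standard Riccati-based state-space realization, and $\Delta(z)^{\-1}$ is rational and causal by construction. The non-causal optimal policy $K_\circ(z) = -(I+F^\ast F)^{\-1}F^\ast G$ is then rational, with poles symmetrically distributed with respect to $\TT$. By hypothesis, $\widetilde L(z)$ is rational and causal (it is the canonical factor of the polynomial spectrum $P(z)/Q(z)$ produced by the approximation step, cf.\ \cref{lem:spect}), and its inverse $\widetilde L(z)^{\-1}$ is likewise rational and causal.

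Second, I would handle the two projection operators. For the anti-causal part, I would use the state-space formula for $\{\cdot\}_-$: given any rational $H(z)$ with a realization split according to its stable/antistable modes (e.g.\ by solving a Sylvester equation to block-diagonalize), the strictly anti-causal component $\{\Delta K_\circ\}_-(z)$ admits a finite-dimensional state-space realization $(\overline A,\overline B,\overline C)$ with $\overline A$ anti-stable. Multiplying this anti-causal realization by the causal rational $\widetilde L(z)$ via standard series interconnection yields a rational transfer function $H_1(z) \defeq \{\Delta K_\circ\}_- \widetilde L(z)$ whose realization has one block corresponding to the anti-stable poles of $\{\Delta K_\circ\}_-$ and one block corresponding to the stable poles of $\widetilde L$. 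The causal projection $\{H_1\}_+(z)$ is then obtained by another Sylvester-equation decoupling that peels off the anti-stable block, leaving a causal rational transfer function $H_2(z) \defeq \{H_1\}_+(z)$ with an explicit state-space realization.

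Third, I would compose $H_2(z)$ on the right with $\widetilde L(z)^{\-1}$ and on the left with $\Delta(z)^{\-1}$, again using series-connection formulas, and add the $H_2$-controller $K_{H_2}(z) = \Delta^{\-1}\{\Delta K_\circ\}_+$, which is rational and causal by the same reasoning as above. All these operations preserve rationality and causality, so the resulting transfer function $K(z)$ has a finite-order state-space realization of the form \eqref{eq:RationalK}; the matrices $(\widetilde F,\widetilde G,\widetilde H,\widetilde J)$ are determined algorithmically by the realizations of $F$, $G$, $\Delta^{\pm 1}$, $K_\circ$, and $\widetilde L^{\pm 1}$, and hence ultimately by $(A,B_u,B_w)$ and $\widetilde L(z)$.

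The main obstacle is the bookkeeping around the anti-causal/causal projections: one must carefully manage the decomposition of rational matrix-valued functions into their stable and anti-stable parts (via Sylvester equations or, equivalently, invariant-subspace splittings) so that $\{\cdot\}_-$ and $\{\cdot\}_+$ map rationals to rationals with controllable orders, and track how the interconnections inflate the state dimension. Once this is in place, the final state-space realization of $K(z)$ follows by routine application of series and parallel interconnection formulas, and the dimension of $e(t)$ is bounded by the sum of the orders of $\Delta$, $K_\circ$, and $\widetilde L$.
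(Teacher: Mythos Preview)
Your proposal is correct and follows essentially the same route as the paper: verify rationality of each factor, handle the causal/anti-causal projections via a Sylvester-type decoupling, and assemble the realization by standard series interconnections. The paper's proof differs only in that it carries the computation through explicitly---starting from a fixed realization $\widetilde L(z)=(I+\widetilde C(zI-\widetilde A)^{-1}\widetilde B)\widetilde D^{1/2}$, invoking the known closed form $\{\Delta K_\circ\}_-(z)=-\bar R B_u^\ast(z^{-1}I-A_k^\ast)^{-1}A_k^\ast P B_w$, solving the single Sylvester equation $A_k^\ast P B_w\widetilde C+A_k^\ast\widetilde U\widetilde A=\widetilde U$ to extract $\{\{\Delta K_\circ\}_-\widetilde L\}_+$, and then producing explicit block formulas for $(\widetilde F,\widetilde G,\widetilde H,\widetilde J)$---whereas you stop at the (correct) assertion that such matrices are determined algorithmically.
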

\vspace{-4mm}

\section{{Numerical Experiments}} \label{sec:simul}
In this section, we present the performance of the \DRO~controller, compared to $ H_2$, $ H_\infty$,  regret-optimal and other finite-horizon DR controllers. We present  frequency domain and time-domain evaluations, and we showcase the performance of the rational approximation method. We employ benchmark models such as [REA4], [AC15], and [HE3] from \cite{aircraft}. In the frequency domain simulations, results for [REA4] and [HE3] are presented. In the time domain simulations for the aircraft model [AC15] are presented, with additional simulations provided in Appendix \ref{ap:sim}. The [REA4] is a chemical reactor model and [HE3] is a helicopter model with 8 states each. The [AC15] is an aircraft model with 4 states. We perform all experiments using MATLAB, on an Apple M1 processor with 8 GB of RAM. We specify the nominal distribution as a Gaussian, with zero mean and identity covariance.
\vspace{-4mm}
\subsection{Frequency Domain Evaluations}

We investigate the behaviour of the \DRO~controller and its rational approximation for various values of the radius $r$.

To show the behavior of the worst-case disturbance we plot its power spectrum $N(e^{j\omega})$ for three different values of the radius $r$ for the [AC15] system in Figure \ref{fig:fig_m_freq_1}. As can be seen for $r=0.01$, the worst-case disturbance is almost white, since that is the case for the nominal disturbance. As $r$ increases, the time correlation of the worst-case disturbance increases, and the power spectrum becomes peaky. 

For the [AC15] system, the worst-case expected regret cost, as outlined in \eqref{def:worst case regret}, for \DRO, the $\mathit H_2$, $\mathit H_\infty$, and RO controllers. are depicted in Figure \ref{fig:ER}. 
We observe that for smaller $r$, the \DRO~performs close to the $\mathit{H}_2$ controller. However, as $r$ increases, the worst-case regret is close to the regret achieved by the RO controller. Throughout the variation in $r$, the \DRO~achieves the lowest worst-case expected regret among all the other mentioned controllers.

To implement the \DRO~controller in practice, we need a rational controller. We find the rational approximation of $N(\ejw)$ as $\frac{P(\ejw)}{Q(\ejw)}$ using the method of Section \ref{sec:rational_approx} for [AC15] and degrees $m=1,2,3$. The performance of the resulting rational controllers is compared to the non-rational \DRO~in \cref{table:worstregret}. As can be seen, the rational approximation with an order greater than 2 achieves an expected regret that well matches that of the non-rational for all values of $r$. 

\begin{table}[ht]
    \centering
    \tiny
    \setlength\tabcolsep{4pt} % Adjust the column spacing
    \begin{tabular}{|c||c|c|c|c|c|c|c|} 
        \hline
        \textbf{ } & \textbf{r=0.01} & \textbf{r=1} & \textbf{r=1.5} & \textbf{r=2} & \textbf{r=3} \\
        \hline \hline
        $DRRO$ & 59.16 &302.08 & 488.57 &718.20 &1307.12  \\
        \hline
        \textbf{RA(1)} & 60.49 &33394.74 &4475.70 &9351.89  &2376.77 \\
        \hline
        \textbf{RA(2)} &   59.58 &303.33 &491.75 &723.96  &1318.98
  \\
        \hline
        \textbf{RA(3)} &  59.57 &302.41 &489.49 &719.72 &1309.85
 \\
        \hline
    \end{tabular}
        \caption{The worst-case expected regret of the non-rational \DRO~controller, compared to the rational controllers RA(1), RA(2), and RA(3), obtained from degree 1, 2, and 3 rational approximations to $N(e^{j\omega})$.}
    %coloring means numerical closeness
    \label{table:worstregret}
    \vspace{-4mm}
\end{table}

\subsection{Time Domain Evaluations}

We compare the time-domain performance of the infinite horizon \DRO~controller to its finite horizon counterparts, namely \DRO~and DR-LQR, as outlined in \cite{cornell_drro_old}.
The latter controllers are computed through an SDP whose dimension scales with the time horizon.  We plot the average LQR cost over 210 time steps, aggregated over 1000 independent trials. %\ref{fig:time_domain}
Figure \ref{fig:figa} illustrates the performance of DR controllers under white Gaussian noise, while \ref{fig:figb}, \ref{fig:figc}, and \ref{fig:figd} demonstrate responses to worst-case noise scenarios dictated by each of the controllers, using $r=1.5$. For computational efficiency, the finite horizon controllers operate over a horizon of only $s=30$ steps and are re-applied every $s$ steps. Their worst-case disturbances in \ref{fig:figc} and \ref{fig:figd} are also generated every $s$ steps, resulting in correlated disturbances only within each $s$ steps.  Our findings highlight the infinite horizon \DRO~controller's superior performance over all four scenarios. Note that extending the horizon of the SDP for longer horizons to come closer to the infinite horizon performance is extremely computationally inefficient. These underscore the advantages of using the infnite horizon \DRO~controller.
\vspace{-3mm}

\section{{Future Work}} \label{sec:conc}
Our work presents a complete framework for solving the DR control problem in the full-information setting. Future generalizations would address our limitations. One is to extend the rational approximation method from single to multi input systems. Another is to extend the results to partially observable systems where the state is not directly accessible. Finally, it would be useful to incorporate adaptation as the controller learns disturbance statistics through observations.

\newpage

\section*{Impact Statement}

This paper presents work whose goal is to advance the field of Machine Learning. There are many potential societal consequences of our work, none which we feel must be specifically highlighted here.

\bibliography{refs}
\bibliographystyle{icml2024}

%%%%%%%%%%%%%%%%%%%%%%%%%%%%%%%%%%%%%%%%%%%%%%%%%%%%%%%%%%%%%%%%%%%%%%%%%%%%%%%
%%%%%%%%%%%%%%%%%%%%%%%%%%%%%%%%%%%%%%%%%%%%%%%%%%%%%%%%%%%%%%%%%%%%%%%%%%%%%%%
% APPENDIX
%%%%%%%%%%%%%%%%%%%%%%%%%%%%%%%%%%%%%%%%%%%%%%%%%%%%%%%%%%%%%%%%%%%%%%%%%%%%%%%
%%%%%%%%%%%%%%%%%%%%%%%%%%%%%%%%%%%%%%%%%%%%%%%%%%%%%%%%%%%%%%%%%%%%%%%%%%%%%%%
\newpage
\appendix
\onecolumn
\begin{center}
{\huge Appendix}
\end{center}

\section{Organization of the Appendix} \label{ap:org}
This appendix is organized into several sections:

First, \cref{ap:defs} provides notations, definitions, and remarks about the problem formulation and uniqueness of the spectral factorization.

Next, \cref{ap:pkkt} contain proofs of the duality and optimality theorems  in \cref{sec:theory}.

Subsequently, \cref{ap:fixed} is dedicated to proofs of lemmas and theorems related to the efficient algorithm discussed in \cref{sec:fixed_point}, and \cref{ap:alg} describes the pseudo-code of the algorithm.

Further, \cref{ap:rational} contains the proof of the state-space representation of the controller presented in \cref{sec:state_space}.

Finally, additional simulation results are presented in \cref{ap:sim}.

\section{Notations, Definitions and Remarks} \label{ap:defs}
\subsection{Notations}
% \jh{Include notation table}
In the paper, we use the notations in \cref{tab:notation} for brevity. 
\begin{table}[ht]
\centering
\begin{tabular}{|c|l|}
\hline
\textbf{Symbol} & \textbf{Description} \\ \hline
\( x_t \) & State at time \( t \) \\ \hline
\( s_t \) & Regulated output at time \( t \) \\ \hline
\( u_t \) & Control input at time \( t \) \\ \hline
\( w_t \) & Exogenous disturbance at time \( t \) \\ \hline
\( A \) & State transition matrix \\ \hline
\( B_u \) & Control input matrix \\ \hline
\( B_w \) & Disturbance input matrix \\ \hline
\( C \) & Regulated output matrix \\ \hline
\( R \) & Control input cost matrix \\ \hline
\( \mathcal{F}_T \) & Finite-horizon operator for control input \\ \hline
\( \mathcal{G}_T \) & Finite-horizon operator for disturbance \\ \hline
\( \mathcal{F} \) & Infinite-horizon operator for control input \\ \hline
\( \mathcal{G} \) & Infinite-horizon operator for disturbance \\ \hline
\( \| \cdot \| \) & Euclidean norm \\ \hline
\( \| \cdot \|_2 \) & $\Htwo$ (Frobenius) norm \\ \hline
\( \| \cdot \|_\infty \) & \( \textit H_\infty \) (operator) norm \\ \hline
\( \mathbb{E} \) & Expectation \\ \hline
\( \causal \) & Set of causal (online) and time-invariant DFC policies \\ \hline
\( \causal_T \) & Set of causal DFC policies over a horizon \( T \)\\ \hline
% \( \mathcal{K}_\circ \) & Non-causal controller \\ \hline
\( \mathcal{R}_\K \) & Regret operator \\ \hline
\( \mathcal{M} \) & Auto-covariance operator for disturbances \\ \hline
\( \mathcal{L} \) & Unique, causal and causally invertible spectral factor of $\M =\L \L^\ast$ \\ \hline
\( \mathcal{N} \) &  The unique positive definite operator equal to $\L^\ast \L$ \\ \hline
\( \Was \) & Wasserstein-2 metric \\ \hline
\( \mathcal{S} \) & Symmetric positive polynomial matrix \\ \hline
% \( \text{Tr}(\cdot) \) & Trace of an infinite-horizon operator \jh{not sure if used} \\ \hline
\( \text{tr}(\cdot) \) & Trace of a Toeplitz operator \\ \hline
\( \{ \cdot \}_+ \) & Causal part of an operator \\ \hline
\( \{ \cdot \}_- \) & Strictly anti-causal part of an operator \\ \hline
\( \sqrt{\M}, \textrm{ or } \M^{\half} \) & Symmetric positive square root of an operator or matrix \\ \hline
\( \Sym_n^+ \) & The set of positive semidefinite matrices \\ \hline
\( \trig_{m,+} \) & The set of positive trigonometric polynomials of degree $m$ \\ \hline
\end{tabular}
\caption{Notation Table}
\label{tab:notation}
\end{table}

\subsection{Explicit Form of Finite-Horizon State Space Model }\label{app:explicit H and L}
Consider the restrictions of the infinite-horizon dynamics in \eqref{eq: infinite horizon model} to the finite horizon as \begin{equation}\label{eq:ouff}
    \s_T=\F_T \u_T+ \G_T \w_T.
\end{equation}
The causal linear measurement model for the finite-horizon case in \eqref{eq:ouff} can be stated explicitly as follows:
\begin{equation}
    \underbrace{\begin{bmatrix}
        s_{0} \\ s_{1} \\ s_{2} \\ \vdots  \\ s_{T}
    \end{bmatrix}}_{\s_T} = 
    \underbrace{\begin{bmatrix}
        0    & 0   & 0  & \dots & 0  \\
        B_u   & 0  & 0  & \dots & 0  \\
        A B_u & B_u & 0 & \dots & 0  \\
        \vdots & \vdots & \vdots & \ddots & \vdots \\
         A^{T-1} B_u & A^{T-2} B_u & A^{T-3} B_u & \ddots & 0  
    \end{bmatrix}}_{\F_T}
    \underbrace{\begin{bmatrix}
         u_{0} \\ u_{1} \\ u_{2} \\ \vdots  \\ u_{T}
    \end{bmatrix}}_{\u_T} + 
    \underbrace{\begin{bmatrix}
        0    & 0   & 0  & \dots & 0  \\
        B_w   & 0  & 0  & \dots & 0  \\
        A B_w & B_w & 0 & \dots & 0  \\
        \vdots & \vdots & \vdots & \ddots & \vdots \\
         A^{T-1} B_w & A^{T-2} B_w & A^{T-3} B_w & \ddots & 0  
    \end{bmatrix}}_{\G_T}
    \underbrace{\begin{bmatrix}
         w_{0} \\ w_{1} \\ w_{2} \\ \vdots  \\ w_{T}
    \end{bmatrix}}_{\w_T}
\end{equation}

\subsection{A Note about Robustness to Disturbances vs Robustness to Model Uncertainties}\label{remark:dist}
In our approach, we consider distributional robustness against disturbances, which provides flexibility, adaptability, and dynamic responses to unforeseen events. Although we do not explicitly address model uncertainties, these uncertainties can be effectively lumped together as disturbances—a technique known as uncertainty/disturbance lumping. This approach is particularly effective when the model uncertainties are relatively small. By treating parameter uncertainties as disturbances, we simplify system analysis and ensure that the controller is robust not only to known uncertainties but also to unexpected variations and modeling errors.

\subsection{A note about the Uniqueness of the spectral factor $\L$}\label{remark:unique L}
In \cref{thm:kkt}, given that $\L$ is the causal and causally invertible spectral factor of $\M=\L \L^\ast$, it is unique up to a unitary transformation of its block-elements from the right. Fixing the choice of the unitary transformation in the spectral factorization (eg. positive-definite factors at infinity \citep{ephremidze_algorithmization_2018}) results in a unique $\L$.

%\section{Proof of Duality Theorems}\label{ap:dual}\input{appendix/appendix_2_duality}

\section{Proof of Optimality Theorems}\label{ap:pkkt}
% \subsection{Proof of Lemma \ref{thm:dual_suboptimal}}

% \subsection{Proof of Theorem \ref{thm:full_optimality}}
\subsection{Proof of \cref{thm:strong_duality}}\label{ap:dual}

This result is proven in detail in \citet[Appendix A1]{kargin2023wasserstein}. Due to its length, we provide only a brief sketch here. Interested readers can refer to \citet{kargin2023wasserstein} for the complete proof.For completeness, we provide the following proof sketch.

First, we provide a finite-horizon counterpart of the strong duality result from \cite{DRORO}. Then, we reformulate the objective functions of both the finite-horizon and infinite-horizon dual problems using normalized spectral measures. We demonstrate the pointwise convergence of the finite-horizon dual objective function to the infinite-horizon objective by analyzing the limiting behavior of the spectrum of Toeplitz matrices. Finally, we show that the infinite-horizon dual problem attains a finite value and that the limits of the optimal values (and solutions) of the finite-horizon dual problem coincide with those of the infinite-horizon dual problem.

\subsection{Proof of Theorem \ref{thm:kkt}}
The proof involves four main steps.
\begin{itemize}
    \item \textbf{Reformulation using Lemma \ref{thm:full_optimality}}: Using Lemma \ref{thm:full_optimality}, we reformulate the original optimization problem. This lemma allows the expression of the convex mapping $\clf{X} \mapsto \tr(\clf{X}^{-1})$ using Fenchel duality, which transforms the objective function into a form that involves the supremum over a positive semi-definite matrix $\M$ and the only term depending on $\K$ remains $\tr(\RR_{\K} \M)$.
    
    \item \textbf{Application of Wiener-Hopf Technique}: We then Lemma \ref{lemma:wiener}, which provides a method to approximate a non-causal controller by a causal one, minimizing the cost $\tr(\RR_{\K} \M)$. The optimal causal controller $\K_\star$ is derived using the Weiner-Hopf Technique.
    
    \item \textbf{Karush-Kuhn-Tucker (KKT) Conditions}: We then find the conditions on the optimal $\M$. This involves simplifying the objective function and finding the optimal $\M_{\gamma}$ and $\K_{\gamma}$ for the level $\gamma$.
    
    \item \textbf{Final Reformulation and Duality}: We further simplify the problem and apply strong duality to achieve the final form. The optimal $\K_\star$ is then derived from the Wiener-Hopf technique, with $\gamma_\star$ and $\M_\star$ obtained through duality arguments.
\end{itemize}

Before proceeding with the proof, we first state two useful lemmas
\begin{lemma}\label{thm:full_optimality}
    The convex mapping $\clf{X}\! \mapsto\! \tr \clf{X}^{\inv}$ for $\clf{X}\!\psdg\! 0$ can be expressed via Fenchel duality as
\begin{equation}\label{eq:trace_of_inverse}
   \sup_{\M \psdg 0 } -\tr(\clf{X} \M ) + 2\tr(\sqrt{\M}) = \begin{cases} \tr(\clf{X}^{-1}), & \quad\text{if } \clf{X}\psdg 0 \\  +\infty, &\quad \text{o.w.} \end{cases}
\end{equation}
 
\end{lemma}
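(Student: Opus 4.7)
The plan is to compute the supremum in closed form by identifying the unique interior critical point of the concave objective, and then handle the degenerate case separately. The objective $\M \mapsto -\tr(\clf{X}\M) + 2\tr(\sqrt{\M})$ is concave on the cone $\M \psdgeq 0$ because $-\tr(\clf{X}\M)$ is linear in $\M$ and $\M \mapsto \tr(\sqrt{\M})$ is concave (the square root is operator-concave on the positive cone, and trace preserves concavity). Concavity guarantees that any stationary point is a global maximizer, so it suffices to solve the first-order condition.

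First, assume $\clf{X} \psdg 0$. I would use the differentiation formula $\nabla_\M \tr(\sqrt{\M}) = \tfrac{1}{2}\M^{-\half}$, which follows from the Daleckii--Krein formula for spectral functions applied to $f(t)=\sqrt{t}$, or equivalently by differentiating $\tr(\sqrt{\M}) = \sum_i \sqrt{\lambda_i(\M)}$ along a symmetric perturbation. Setting the gradient $-\clf{X} + \M^{-\half}$ to zero yields the unique interior critical point $\M_\star = \clf{X}^{-2}$. Substituting back gives
\begin{equation*}
    -\tr(\clf{X}\cdot \clf{X}^{-2}) + 2\tr(\clf{X}^{-1}) = -\tr(\clf{X}^{-1}) + 2\tr(\clf{X}^{-1}) = \tr(\clf{X}^{-1}),
\end{equation*}
which matches the claimed value.

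Second, I would handle the case where $\clf{X}$ fails to be positive definite, i.e., has a non-positive eigenvalue. If $\clf{X}v = \lambda v$ with $\|v\|=1$ and $\lambda \leq 0$, take $\M_t = t\, vv^\ast$ with $t > 0$. Then $\tr(\clf{X}\M_t) = t\lambda$ and $\tr(\sqrt{\M_t}) = \sqrt{t}$, so the objective evaluates to $-t\lambda + 2\sqrt{t}$. For $\lambda < 0$ the linear term drives the objective to $+\infty$ as $t\to\infty$, while for $\lambda = 0$ the $2\sqrt{t}$ term alone diverges, establishing that the supremum equals $+\infty$ in both sub-cases. This completes the dichotomy.

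The main technical obstacle will be justifying the matrix/operator derivative $\nabla_\M \tr(\sqrt{\M}) = \tfrac{1}{2}\M^{-\half}$ rigorously in the operator setting of the paper, where $\clf{X}$ and $\M$ are infinite-dimensional Toeplitz operators rather than finite matrices. The argument extends by approximation through truncated Toeplitz sections and appeals to trace-class continuity of the square-root function, but care must be taken so that the interior critical point $\M_\star = \clf{X}^{-2}$ is admissible (bounded, positive definite, and has the requisite trace regularity). Once the derivative formula is in hand, the concavity of the objective certifies global optimality without further work.
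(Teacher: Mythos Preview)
Your proposal is correct and follows essentially the same approach as the paper: note concavity, solve the first-order condition to obtain $\M_\star=\clf{X}^{-2}$, and handle the degenerate case by pushing mass along a bad eigendirection. Your version is in fact more complete than the paper's terse proof, since you explicitly treat the $\lambda=0$ subcase (the paper only mentions negative eigenvalues) and you flag the infinite-dimensional justification, which the paper does not address.
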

\begin{proof}
  Observe that the objective $-\tr(\clf{X} \M ) + 2\tr(\sqrt{\M})$ is concave in $\M$, and the expression on the right-hand side can be obtained by solving for $\M$. When $\clf{X} \not\succeq 0$, \ie, $\clf{X}$ may have negative eigenvalues, then the expression $\tr(\clf{X} \M )$ can be made arbitrarily negative, and $\tr(\sqrt{\M})$ arbitrarily large, by chosing an appropriate $\M$.
\end{proof}

The following lemma will be useful in the proof of Theorem \ref{thm:kkt}.
    \begin{lemma}[{Wiener-Hopf Technique \citep{kailath_linear_2000}}]\label{lemma:wiener}
    Consider the problem of approximating a non causal controller $\K_\circ$ by a causal controller $\K$, such that $\K$ minimises the cost $\tr(\RR_{\K} \M )$, i.e.,
    \begin{align}
        \inf_{\K \in\causal} \tr(\RR_{\K} \M )
    \end{align}
    where $\M \psdg 0$, $\RR_{\K} = \left(\K - \K_\circ \right)^{\ast}\Delta^{\ast}\Delta\left(\K - \K_\circ \right)$ and $\K_\circ$ is the non-causal controller that makes the objective defined above zero. Then, the solution to this problem is given by 
    \begin{align}
        \K_\star = \Delta^\inv\cl{\Delta \K_\circ \L}_{\!+} \L^\inv,
        \label{eq::nehari_controller}
    \end{align}
    where $\L$ is the unique causal and causally invertible spectral factor of $\M$ such that $\M = \L \L^\ast$ and $\cl{ \cdot }_{\!+}$ denotes the causal part of an operator.
    Alternatively, the controller can be written as,
    \begin{align}
        \K_\star = \K_{\Htwo} +  \Delta^{-1}\cl{  \{\Delta \K_\circ \}_{-} \L_\star}_{+},
        \label{eq::nehari_controller_h2}
    \end{align}
    where $\K_{\Htwo}\!\defeq\!\Delta^\inv \{\Delta \K_\circ \}_{\!+}$.
    \label{lemma::nehari}
\end{lemma}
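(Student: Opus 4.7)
My plan is to reformulate the trace objective as a Hilbert–Schmidt norm via the spectral factorization, then exploit the orthogonal decomposition of operators into causal and strictly anti-causal parts to identify the optimum.

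First, I would use $\M = \L \L^\ast$ and cyclicity of the trace to rewrite
\[
\tr(\RR_{\K} \M) = \tr\!\bigl((\K-\K_\circ)^{\!\ast}\Delta^{\!\ast}\Delta(\K-\K_\circ)\L\L^{\!\ast}\bigr) = \Norm[\HS]{\Delta(\K-\K_\circ)\L}^{2},
\]
so the objective becomes a squared Hilbert–Schmidt distance between the causal operator $\Delta\K\L$ and the fixed operator $\Delta\K_\circ\L$.

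Next, since $\Delta$ and $\L$ are causal with causal inverses, the map $\K\mapsto\Delta\K\L$ is a bijection from $\causal$ onto the space of (bounded) causal operators. Thus, setting $\clf{U}\defeq\Delta\K\L$ and $\clf{V}\defeq\Delta\K_\circ\L$, the problem reduces to
\[
\inf_{\clf{U}\text{ causal}}\;\Norm[\HS]{\clf{U} - \clf{V}}^{2}.
\]
The Hilbert–Schmidt inner product orthogonally decomposes any operator into its causal and strictly anti-causal parts, $\clf{V}=\{\clf{V}\}_{+}+\{\clf{V}\}_{-}$, with $\inner{\clf{A}_{+}}{\clf{B}_{-}}_{\HS}=0$. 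Hence $\Norm[\HS]{\clf{U}-\clf{V}}^{2}=\Norm[\HS]{\clf{U}-\{\clf{V}\}_{+}}^{2}+\Norm[\HS]{\{\clf{V}\}_{-}}^{2}$, and the unique minimizer among causal $\clf{U}$ is $\clf{U}_\star=\{\Delta\K_\circ\L\}_{+}$. Inverting the bijection gives $\K_\star=\Delta^{-1}\{\Delta\K_\circ\L\}_{+}\L^{-1}$, which is \eqref{eq::nehari_controller}.

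For the alternative form \eqref{eq::nehari_controller_h2}, I would decompose $\Delta\K_\circ=\{\Delta\K_\circ\}_{+}+\{\Delta\K_\circ\}_{-}$ and use that $\{\Delta\K_\circ\}_{+}\L$ is already causal (product of two causal operators), so
\[
\{\Delta\K_\circ\L\}_{+} = \{\Delta\K_\circ\}_{+}\L + \{\{\Delta\K_\circ\}_{-}\L\}_{+}.
\]
Substituting back and recognizing $\K_{\Htwo}=\Delta^{-1}\{\Delta\K_\circ\}_{+}$ yields \eqref{eq::nehari_controller_h2}. The main obstacles I anticipate are technical rather than conceptual: rigorously justifying the $\pm$ orthogonal decomposition for doubly-infinite Toeplitz operators (including the fact that $\{\cdot\}_{+}$ is a bounded orthogonal projection on the appropriate Hilbert–Schmidt class), and checking that the resulting $\K_\star$ lies in $\causal$—the latter follows because products and causal parts of causal operators are causal, and $\L^{-1}$ is causal by the canonical spectral factorization.
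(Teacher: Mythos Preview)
Your proposal is correct and follows essentially the same route as the paper: rewrite $\tr(\RR_\K\M)$ as $\Norm[\HS]{\Delta\K\L-\Delta\K_\circ\L}^2$ via the factorization $\M=\L\L^\ast$ and trace cyclicity, then project onto the causal part and invert using causality of $\Delta^{-1},\L^{-1}$; the alternative form is obtained identically by splitting $\Delta\K_\circ$ into its $\{\cdot\}_{+}$ and $\{\cdot\}_{-}$ parts. Your write-up is in fact slightly more explicit than the paper's about the orthogonality of the causal/strictly anti-causal decomposition, and your derivation of the second form correctly produces an extra $\L^{-1}$ on the right (matching the paper's Theorem~\ref{thm:kkt}), which is simply omitted by a typo in the lemma's displayed equation~\eqref{eq::nehari_controller_h2}.
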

\begin{proof}
    Let $\L$ be the unique causal and causally invertible spectral factor of $\M$, $\ie$ $\M = \L \L^{\ast}$. Then, using the cyclic property of $\tr$, the objective can be written as,
    \begin{align}
        \inf_{\K \in\causal} \tr( \Delta\left(\K - \K_\circ \right) \M \left(\K - \K_\circ \right)^{\ast}\Delta^{\ast}) &= \inf_{\K \in\causal} \tr( \left(\Delta\K - \Delta\K_\circ \right) \L \L^{\ast}\left(\Delta\K - \Delta\K_\circ \right)^{\ast}) \\
        &= \inf_{\K \in\causal} \tr( \left(\Delta\K\L - \Delta\K_\circ\L \right) \left(\Delta\K\L - \Delta\K_\circ\L \right)^{\ast}) \\
        &= \inf_{\K \in\causal} \left\| \Delta\K\L - \Delta\K_\circ \L \right\|^2_{2}.
    \end{align} 
 Since $\Delta, \K$ and $\L$ are causal, and $\Delta\K_\circ\L$ can be broken into causal and non-causal parts, it is evident that the controller that minimizes the objective is the one that makes the term $\Delta\K\L - \Delta\K_\circ \L$ strictly anti-causal, cancelling off the causal part of $\Delta\K_\circ \L$. This means that the optimal controller satisfies,
 \begin{align}
    \Delta\K_\star\L = \cl{\Delta \K_\circ \L}_{\!+}.
\end{align}
Also, since $\L^\inv$ and $\Delta^\inv$ are causal, the optimal causal controller is given by \eqref{eq::nehari_controller}. Finally, using the fact that $\Delta \K_\circ = \cl{\Delta \K_\circ}_{\!+} + \cl{\Delta \K_\circ}_{\!-}$ and simplifying, we get \eqref{eq::nehari_controller_h2}.
\end{proof}

\paragraph{\textit{Proof of Theorem \ref{thm:kkt}.}}
We first simplify our optimization problem \eqref{eq:DR-RO} using Lemma \ref{thm:full_optimality}. We then find the conditions on the optimal optimization variables using Karush-Kuhn-Tucker (KKT) conditions. Using Lemma \ref{thm:full_optimality}, we can write,
\begin{align*}
       \inf_{\substack{\K \in\causal, \\ \gamma \I \psdg \RR_{\K} }} \tr(  ( \I - \gamma^\inv \RR_{\K} )^{\-1} \M_{\circ}) &=  \inf_{\K \in\causal}  \sup_{\M \psdg 0 } -\tr((\I - \gamma ^\inv \RR_{\K} ) \M ) + 2\tr\left(\sqrt{\M^{\half}_{\circ}\M \M^{\half}_{\circ}}\right) \\
       % &= \inf_{\K \in\causal}  \sup_{\M \psdg 0 } -\tr(\M)  + 2\tr(\sqrt{\M}) +\gamma^\inv \tr(\RR_{\K} \M ) \\
       &=  \sup_{\M \psdg 0 }   -\tr(\M)  + 2\tr\left(\sqrt{\M^{\half}_{\circ}\M \M^{\half}_{\circ}}\right) +\inf_{\K \in\causal}\gamma^\inv\tr(\RR_{\K} \M )
\end{align*}

Fixing $\gamma\geq 0$, we focus on the reduced subproblem of \eqref{eq:dual_worst_case_regret},
\begin{equation}\label{eq: subproblem for causal infinite horizon}
    \sup_{\M \psdg 0 }   -\gamma \tr(\M) -\gamma \tr(\M_\circ)  + 2\gamma\tr\left(\sqrt{\M^{\half}_{\circ}\M \M^{\half}_{\circ}}\right) +\inf_{\K \in\causal}\tr(\RR_{\K} \M ).
\end{equation}
Using the definition of the Bures-Wasserstein distance, we can reformulate \eqref{eq: subproblem for causal infinite horizon} as 
\begin{equation}
    \sup_{\M \psdg 0}  \inf_{\K \in\causal} \tr(\RR_{\K}\M ) \- \gamma\BW(\M, \M_\circ)^2 \coloneqq \sup_{\M \psdg 0} \Phi(\M).
    \label{eq:sup_inf_m}
\end{equation}
Thus, the original formulation in \eqref{eq:dual_worst_case_regret} can be expressed as
\begin{equation}
    \inf_{ \gamma \geq 0 } \sup_{\M \psdg 0}  \inf_{\K \in\causal}    \tr(\RR_{\K}\M ) + \gamma\pr{r^2- \BW(\M, \M_\circ)^2}.
\end{equation}
Note that the objective above is affine in $\gamma\geq0$ and strictly concave in $\M$. Moreover, primal and dual feasibility hold, enabling the exchange of $ \inf_{ \gamma \geq 0 } \sup_{\M \psdg 0} $ resulting in
\begin{align}
    \sup_{\M \psdg 0}  \inf_{\K \in\causal}   \inf_{ \gamma \geq 0 }  \tr(\RR_{\K}\M ) + \gamma\pr{r^2- \BW(\M, \M_\circ)^2},
\end{align}
where the inner minimization over $ \gamma$ reduces the problem to its constrained version in \cref{eq:maxmin}.

Finally, the form of the optimal $\K_\star$ follows from the Wiener-Hopf technique in \cref{lemma:wiener} and the optimal $\gamma_\star$ and $\M_\star$ can be obtained using the strong duality result in \cref{ap:dual}. To see the optimal form of $\M_\star$, consider the gradient of $\Phi(\M)$ in \eqref{eq:sup_inf_m} with respect to $\M$ and setting it to $0$. Using Danskin theorem \cite{danskin}, we have,
\begin{align}
        \nabla \Phi(\M) = \M^{\half}_{\circ} \left(\M_\circ^{\half} \M_\star \M_\circ^{\half}\right)^{-\half} \M^{\half}_{\circ} - \I + \gamma^{-1}\RR_{\K_\star} = 0.
\end{align}

Taking inverse on both sides, we get,
\begin{align}
          \M^{-\half}_{\circ} \left(\M_\circ^{\half} \M_\star \M_\circ^{\half}\right)^{\half} \M^{-\half}_{\circ} = \left(\I - \gamma^{-1}\RR_{\K_\star}\right)^{-1}.
\end{align}
We can now obtain two equations. First, by right multiplying by $\M^{\half}_\circ$ and second, by left multiplying by $\M^{\half}_\circ$. On multiplying these two equations together and simplifying, we get \eqref{eq:optimal M}.

\section{Proofs related to the Efficient Algorithm in \cref{sec:fixed_point}}\label{ap:fixed}

% \subsection{Proof of Lemma~\ref{lemma:finiteBl}}
\subsection{Proof of \cref{lemma:kkt2}}
With $\T \defeq \{\Delta\K_\circ  \}_{-}$, the optimality condition in \eqref{eq:optimal K and M} takes the equivalent form:
\begin{subequations}
    \begin{align}
    &\textit{i.}\; \M_\star =  \pr{ \I - \gamma_\star^\inv \mathcal R_{\K_\star} }^{-2},  \\
    &\textit{ii.}\; \mathcal R_{\K_\star}\= \L_\star^{\-\ast} \cl{ \T\L_\star  }_{\-}^\ast \cl{ \T \L_\star}_{\-} \L_\star^{\-1} ,  \\
    &\textit{iii.}\; \tr\br{\pr{(\I - \gamma_\star^\inv \mathcal R_{\K_\star} (z) )^\inv - \I}^2 } = r^2, 
\end{align}
\end{subequations}
Using the spectral factorization $\M_\star \L_\star \L_\star^\ast$, the conditions $\textit{i.}$ and $\textit{ii.}$ can be equivalently re-expressed as
\begin{align}
    &\textit{i.}\;(\L_\star  \L_\star^\ast)^{-1/2} =   I - \gamma_\star^\inv \mathcal R_{\K_\star} \\
    &\textit{ii.}\;  \mathcal R_{\K_\star} \= \L_\star^{\-\ast}\cl{ \T\L_\star  }_{\-}^\ast \cl{ \T\L_\star  }_{\-}  \L_\star^{\-1} 
\end{align}
By plugging $\textit{ii.}$ into $\textit{i.}$, we get
\begin{align}
      0=\I - (\L_\star  \L_\star^\ast)^{-1/2} -  \gamma_\star^\inv \pr{\L_\star^{\-\ast}\cl{ \T\L_\star  }_{\-}^\ast \cl{ \T\L_\star  }_{\-}   \L_\star^{\-1} } = 0,
\end{align}
Multiplying by $\L_\star^\ast$ from the left and by $\L_\star$ from the right, we get 
\begin{align*}
   0&= \L_\star^\ast \L_\star \-(\L_\star^\ast \L_\star)^{1/2} \-  \gamma_\star^\inv \cl{ \T\L_\star  }_{\-}^\ast\cl{ \T\L_\star  }_{\-} ,
\end{align*}
where we used the identity $\L_\star^\ast (\L_\star \L_\star^\ast)^{-1/2} \L_\star = (\L_\star^\ast \L_\star)^{1/2}$. Letting  $\NN_\star=\L_\star^\ast \L_\star$, this expression can be solved for $\N_\star$, yielding the following implicit equation,
\begin{align}
    \NN_\star=\L_\star^\ast \L_\star  = \frac{1}{4}\pr{\I + \sqrt{ \I + 4\gamma_\star^\inv \cl{ \T\L_\star  }_{\-}^\ast\cl{ \T\L_\star  }_{\-}  }}^2,
\end{align}
implying thus \eqref{eq:optimal N sqrt form}, with $\gamma_\star\>0$ satisfying $\tr\br{((\I-\gamma_\star^\inv \RR_{\K_\star})^\inv- \I)^2} = r^2$ (or equivalently, $ \BW(\L_\star \L_\star^\ast,\I )=r$).%the constraint in \eqref{eq:maxmin} in \cref{thm:kkt} being tight at the saddle-point $(\K_\star,\NN_\star)$, \ie $\quad\BW(\L_\star \L_\star^\ast,\I )=r$.
\qed

\subsection{Note on Frequency Domain Representation of Toeplitz Operators}
We start this section of the appendix by justifying our choice of working out our results in the frequency domain.

Let $\mathcal{V}=[V_{ij}]_{i,j=-\infty}^\infty$ be a doubly infinite block matrix, \ie a \emph{Toeplitz} operator, which represents a discrete, linear, time-invariant system (\ie  $V_{ij}=V_{i-j}$), and which maps a sequence of inputs to a sequence of outputs.

In this case of a time-invariant system, the representation of the operator in the  z-domain (or the so-called bilateral z-transform) is
\begin{equation}
    V(z)=\sum_{i=-\infty}^\infty V_i z^{-i},
\end{equation}
defined for the regions of the complex plane where the above series converges
absolutely, known as the ROC: region of convergence. $V(z)$ is also known as the transfer matrix. The causality of $\mathcal V$ can be readily given
in terms of $V(z)$. Indeed, we have the following: $\mathcal V$ is causal if and only if $V(z)$ is analytic in the exterior of some
annulus, $|z| > \alpha > 0$. Likewise, $\mathcal V$ is anticausal if and only if $V(z)$ is analytic
in the interior of some annulus, $|z| < \alpha < 0$. Moreover, $\mathcal V$ is strictly causal
(anticausal) if and only if it is causal (anticausal) and $V(\infty) = 0 (V(0) = 0)$.

We also define the trace of a Toeplitz operator $\M$ as follows
\begin{equation}
    \tr(\M) = \frac{1}{2\pi} \int_{0}^{2\pi} \Tr(M(\ejw)) d\omega.
\end{equation}

In the coming sections, we use the frequency domain counterparts of our Toeplitz operators (such as $\F, \G, \M...$) by setting $z=e^{j\omega}$ for $\omega\in[0,2\pi)$.

\subsection{Frequency-Domain Characterization of the Optimal Solution of \cref{prob:DR-RO}} \label{app: frequency domain details}

We present the frequency-domain formulation of the saddle point  $(\K_\star,\M_\star)$ derived in \cref{thm:kkt} to reveal the structure of the solution. We first introduce the following useful results:

Denoting by $M_\star(z)$ and $R_{K_\star}(z)$ the transfer functions corresponding to the optimal $\M_\star$ and $\mathcal R_{\K_\star}$, respectively, the optimality conditions in \eqref{eq:optimal K and M} and \eqref{eq:optimal N sqrt form} take the equivalent forms:
\begin{subequations}\label{eq: frequency domain optimal M and K}
    \begin{align}
    &\textit{i.}\; M_\star(z) =  \pr{ I - \gamma_\star^\inv R_{K_\star}(z) }^{-2},  \label{eq: optimal M(z)}\\
    &\textit{ii.}\; R_{K_\star}(z) \= L_\star(z)^{\-\ast} \cl{ \T\L_\star  }_{\-}(z)^\ast \cl{ \T \L_\star}_{\-}(z) L_\star(z)^{\-1} ,  \label{eq: optimal K(z)}\\
    &\textit{iii.}\; \tr\br{\pr{(I - \gamma_\star^\inv R_{K_\star} (z) )^\inv - I}^2 } = r^2,  \label{eq: optimal gamma star},\\
    &\textit{iv.}\;  N_\star(z)=L_\star(z)^\ast L_\star(z)  = \frac{1}{4}\pr{I + \sqrt{ I + 4\gamma^\inv \cl{ \T\L_\star  }_{\-}(z)^\ast\cl{ \T\L_\star  }_{\-} (z) }}^2,\label{eq:Nfreq App}
\end{align}
\end{subequations}
where \begin{equation}\label{eq:L}
    L_\star(z)=\sum_{t=0}^{\infty} \widehat{L}_{\star,t} z^{- t}
\end{equation} is the transfer function corresponding to the causal canonical factor $\L_\star$ and $\T = \{\Delta\K_\circ  \}_{-}$ is the strictly anticausal operator where its transfer function, $T(z)$, is found from the following: 

\begin{lemma}[Adapted from lemma 4 in \cite{sabag2021regret}] The transfer function $\Delta(z) K_\circ(z)$ can be written as the sum of a causal and strictly anticausal transfer functions:
\begin{align}
     &\Delta(z) K_\circ(z)= T(z)+U(z)\label{eq:deltaK}\\
 &T(z)= \{\Delta \K_\circ \}_{\-}(z)=\overline{C}{(z^{-1}I-\overline{A})}^{-1}\overline{B} \label{eq:T}\\
 &U(z)=\{\Delta \K_\circ \}_{\+}(z)=\Delta(z)K_{H_2}(z)=\overline{C}P (A {(z I-A)}^{-1} + I)B_w\label{eq:U}
\end{align}

where $K_{H_2}(z)=\Delta^{\-1}\{\Delta \K_\circ\}_{\+}(z)$, and $K_{\textrm{lqr}}\!\defeq\!{(I\+B_u^\ast PB_u)}^{\inv}B^\ast_u P A$, with $P \!\succ \!0$ is the unique stabilizing solution to the LQR Riccati equation $P=Q+A^\ast PA-A^\ast PB_u{(I+B_u^\ast PB_u)}^{-1}B_u^\ast PA,$ $Q=C^\tp C,$ $A_k=A\-B_u K_{\textrm{lqr}}$, and
\begin{equation}\label{eq:newABC}
    \begin{aligned}
        \overline{A}=A_k^\ast, \quad \overline{B}=A_k^\ast P B_w, \quad \overline{C}=- (I+B_u^\ast P B_u)^{-\ast/2} B_u^\ast.
    \end{aligned}
\end{equation}
\end{lemma}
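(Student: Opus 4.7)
The plan is to derive the decomposition by combining the closed-form non-causal controller $K_\circ = -(\I + \F^\ast \F)^{-1}\F^\ast \G$ with the canonical spectral factorization $\Delta^\ast \Delta = \I + \F^\ast \F$. These immediately give $\Delta(z) K_\circ(z) = -\Delta^{-\ast}(z) F^\ast(z) G(z)$, and the goal reduces to splitting the right-hand side into a strictly anti-causal piece $T(z)$ and a causal piece $U(z)$ using state-space realizations and the discrete algebraic Riccati equation satisfied by $P$.

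First, I would write down a state-space realization of $\Delta(z)$ in terms of $P$ and the LQR gain $K_{\textrm{lqr}}$. The standard Kalman--Szego identity for the discrete-time positive-real lemma gives the inner-like factor $\Delta(z) = (I + B_u^\ast P B_u)^{1/2}\bigl(I + K_{\textrm{lqr}}(zI - A)^{-1} B_u\bigr)$, whose inverse admits the realization $\Delta^{-1}(z) = \bigl(I - K_{\textrm{lqr}}(zI - A_k)^{-1} B_u\bigr)(I + B_u^\ast P B_u)^{-1/2}$ with stable state matrix $A_k = A - B_u K_{\textrm{lqr}}$. Taking the Hermitian adjoint yields an anti-causal realization of $\Delta^{-\ast}(z)$ whose dynamics are governed by $\overline{A} = A_k^\ast$ and whose leading gain supplies $\overline{C} = -(I + B_u^\ast P B_u)^{-\ast/2} B_u^\ast$, matching the expressions in \eqref{eq:newABC}.

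Second, I would substitute the realizations $F^\ast(z) = B_u^\ast(z^{-1}I - A^\ast)^{-1} C^\ast$ and $G(z) = C(zI - A)^{-1} B_w$ into $-\Delta^{-\ast}(z) F^\ast(z) G(z)$ and eliminate the cross factor $C^\ast C$ using the Riccati identity $C^\ast C = P - A^\ast P A + K_{\textrm{lqr}}^\ast (I + B_u^\ast P B_u) K_{\textrm{lqr}}$. This substitution lets me telescope $A^\ast P A$ through the resolvent $(z^{-1}I - A^\ast)^{-1}$ and produce a partial-fraction split across the two disjoint pole sets: those of $A$ (causal) and those of $A_k^\ast$ (strictly anti-causal). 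Collecting the anti-causal residue then yields $T(z) = \overline{C}(z^{-1}I - \overline{A})^{-1}\overline{B}$ with $\overline{B} = A_k^\ast P B_w$, while the causal remainder is identified with $U(z) = \overline{C} P \bigl(A(zI - A)^{-1} + I\bigr) B_w$. A direct comparison of this causal remainder against the definition $K_{H_2} = \Delta^{-1}\{\Delta K_\circ\}_{+}$ confirms the identity $U(z) = \Delta(z) K_{H_2}(z)$.

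The main technical obstacle is the algebraic bookkeeping in the second step: the Riccati substitution produces several mixed terms, and one must show that every $A$-pole inside the prospective anti-causal residue cancels while every $A_k^\ast$-pole inside the prospective causal residue cancels, leaving clean single-resolvent expressions in each part. Strict anti-causality of $T$ then follows from the stability of $A_k$ (eigenvalues inside the unit disc make $(z^{-1}I - \overline{A})^{-1}$ expand into a purely anti-causal Laurent series), and properness of $U$ is automatic from its realization. Since the statement is adapted from Lemma~4 of \cite{sabag2021regret}, most of the underlying algebraic identities can be inherited from that work; only the specific identification of $\overline{B}$ and $\overline{C}$ for the present disturbance-to-output input structure remains to be verified here.
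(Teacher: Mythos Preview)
The paper does not supply its own proof of this lemma: it is stated as ``Adapted from lemma~4 in \cite{sabag2021regret}'' and used as a black box, with no derivation given in the text or appendix. Your outline---writing $\Delta K_\circ = -\Delta^{-\ast}F^\ast G$, realizing $\Delta^{-\ast}$ via the LQR Riccati factorization, and then using the Riccati identity $C^\ast C = P - A^\ast P A + K_{\textrm{lqr}}^\ast(I+B_u^\ast P B_u)K_{\textrm{lqr}}$ to effect the causal/anticausal partial-fraction split---is exactly the standard route taken in \cite{sabag2021regret}, and you yourself note this at the end. So your proposal is correct and matches the intended argument; there is simply nothing in the present paper to compare against beyond the citation.
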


Notice that given the causal $L(z)$ and strictly anti-causal $T(z)$, the strictly anti-causal part $\{T(z)L(z)\}_{-}$ has a state space representation, shown in the following lemma. 

\begin{comment}
    \begin{lemma}\label{eq: finite parameter of anticausal (TL)}
Let $\L$ be a causal operator. The strictly anti-causal operator $\Ss$, defined in the frequency domain as $S(z)\defeq \cl{\Delta \K_\circ \L}_{\-}(z)$, is equal to $S(z)=\cl{\T \L}_{\-}(z)$  and has a state space representation as follows: 
    \begin{equation}\label{eq:plug S}
         S(z) =  \cl{\Delta \K_\circ \L}_{\-}(z)=\cl{\T  \L}_{\-}(z)=\overline{C}(z^{-1}I-\overline{A})^{-1}\Gamma,
    \end{equation}
    where 
    \begin{equation}
         \Gamma = \frac{1}{2\pi} \int_{0}^{2\pi} (I-z \overline{A})^\inv \overline{B} L(\ejw) d\omega.
    \end{equation}
\end{lemma}
\end{comment}
\begin{lemma}\label{eq: finite parameter of anticausal (TL)}
Let $\L$ be a causal operator. The strictly anti-causal operator $\cl{\T \L}_{\-}$ possesses a state space representation as follows: 
    \begin{equation}\label{eq:plug S}
         \cl{\T  \L}_{\-}(z)=\overline{C}(z^{-1}I-\overline{A})^{-1}\Gamma,
    \end{equation}
    where 
    \begin{equation}
         \Gamma = \frac{1}{2\pi} \int_{0}^{2\pi} (I-\ejw \overline{A})^\inv \overline{B} L(\ejw) d\omega.
    \end{equation}
\end{lemma}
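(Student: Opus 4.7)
}
The plan is to expand both $T(z)$ and $L(z)$ as power series in $z$ in their respective regions of convergence, multiply them, extract the strictly anti-causal part, and recognize the resulting coefficients as the Fourier coefficients of the integral defining $\Gamma$. The key enabling facts are (i) $\overline{A} = A_k^{\ast}$ is Schur stable (as $A_k = A - B_u K_{\mathrm{lqr}}$ is the closed-loop LQR matrix), so $\rho(\overline{A}) < 1$, and (ii) $\L$ is causal with Markov parameters $\{\widehat{L}_{t}\}_{t \geq 0}$ as in \eqref{eq:L}.

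First, starting from $T(z) = \overline{C}(z^{-1}I - \overline{A})^{-1}\overline{B}$, the Neumann series expansion valid for $|z| < \rho(\overline{A})^{-1}$ gives
\begin{equation*}
    (z^{-1}I - \overline{A})^{-1} = \sum_{k=1}^{\infty} \overline{A}^{k-1} z^{k},
    \qquad \text{hence}\qquad
    T(z) = \sum_{k=1}^{\infty} \overline{C}\,\overline{A}^{k-1}\overline{B}\, z^{k},
\end{equation*}
so $T(z)$ is strictly anti-causal as expected. Combining with $L(z) = \sum_{t=0}^{\infty} \widehat{L}_{t}\, z^{-t}$ and retaining only the terms with strictly positive powers of $z$ (i.e., the strictly anti-causal part), I would reindex via $m = k - t \geq 1$ to obtain
\begin{equation*}
    \{T(z) L(z)\}_{-} = \sum_{m=1}^{\infty} \overline{C}\,\overline{A}^{m-1}\!\left( \sum_{t=0}^{\infty} \overline{A}^{t}\, \overline{B}\, \widehat{L}_{t}\right) z^{m}.
\end{equation*}
Defining $\Gamma \defeq \sum_{t=0}^{\infty} \overline{A}^{t}\, \overline{B}\, \widehat{L}_{t}$ (absolutely convergent since $\rho(\overline{A})<1$ and $L$ is in $\Htwo$), the outer sum collapses back, by the same Neumann expansion, to $\overline{C}(z^{-1}I - \overline{A})^{-1}\Gamma$, which is the claimed state-space form.

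It remains to identify $\Gamma$ with the contour integral in the statement. For this I would invoke Parseval/Fourier inversion: using the expansions $(I - e^{j\omega}\overline{A})^{-1}\overline{B} = \sum_{k=0}^{\infty} \overline{A}^{k}\overline{B}\, e^{jk\omega}$ and $L(e^{j\omega}) = \sum_{t=0}^{\infty} \widehat{L}_{t}\, e^{-jt\omega}$, the orthogonality relation $\frac{1}{2\pi}\int_{0}^{2\pi} e^{j(k-t)\omega}\,d\omega = \delta_{k,t}$ yields
\begin{equation*}
    \frac{1}{2\pi}\int_{0}^{2\pi} (I - e^{j\omega}\overline{A})^{-1}\overline{B}\, L(e^{j\omega})\, d\omega
    = \sum_{t=0}^{\infty} \overline{A}^{t}\overline{B}\, \widehat{L}_{t} = \Gamma,
\end{equation*}
matching the formula for $\Gamma$ in the lemma.

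The main subtlety — not really an obstacle but worth verifying carefully — is justifying the interchange of summation and integration, and ensuring that the series defining $T(z)$ and $\{TL\}_{-}(z)$ converge on the unit circle. Both follow from Schur stability of $\overline{A}$ together with $L \in \Htwo$, which ensure absolute summability of the relevant coefficient sequences, so Fubini applies. Once the reindexing above is set up, the entire argument is essentially a bookkeeping exercise and no nontrivial operator-theoretic difficulty arises.
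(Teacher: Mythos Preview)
Your proposal is correct and follows essentially the same route as the paper: Neumann-expand $T(z)$, multiply by the causal power series of $L(z)$, extract the strictly positive powers of $z$ via the reindexing $k=m+t$, and then identify $\Gamma=\sum_{t\ge 0}\overline{A}^{t}\overline{B}\widehat{L}_t$ with the stated integral via Parseval/Fourier orthogonality. The only cosmetic difference is that the paper begins from $\{\Delta\K_\circ\L\}_{-}$ and first discards the causal piece $\{U\L\}_{-}=0$ before arriving at $\{\T\L\}_{-}$, whereas you work directly with $\{\T\L\}_{-}$ as in the lemma statement; your added remarks on Schur stability of $\overline{A}$ and the Fubini justification are a welcome tightening of the same argument.
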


\begin{proof}
Let $L(z)= \sum_{t=0}^{\infty} \widehat{L}_t z^{-t}$ be the transfer function of $\L$. Using equations \eqref{eq:deltaK} and \eqref{eq:T},\eqref{eq:U}, $S(z)\defeq \cl{\Delta \K_\circ \L}_{\-}(z)$, can be written as: 
\begin{align}
     S(z) &=  \cl{T  L}_{\-}(z) +  \cl{U L}_{\-}(z)\\
    &\stackrel{(a)}{=} \cl{\overline{C}(z I - \overline{A})^\inv \overline{B}L(z)}_{\-}\\
    &\stackrel{(b)}{=} \cl{\overline{C}\sum_{t=0}^{\infty} z^{ (t+1)} \overline{A}^t \overline{B} \sum_{m=0}^{\infty}\widehat{L}_m z^{- m}}_{\-}\label{eq:ac}\\
    &\stackrel{(c)}{=} {\overline{C}\left(\sum_{t=0}^{\infty} z^{(t+1)} \overline{A}^t \right) \left(\sum_{m=0}^{\infty}\overline{A}^m \overline{B}\widehat{L}_m\right) }\\
    &\stackrel{(d)}{=}\overline{C}(z^{-1}I-\overline{A})^{-1}\Gamma \label{eq:ad}
\end{align}
Here, (a) holds because both $U(z)$ and $L(z)$ are causal, so the strictly anticausal part of $U(z) L(z)$ is zero. (b) holds as we do the Neumann series expansion of $(z I-\overline{A})$ and replace $L(z)$ by its equation \eqref{eq:L}. (c) holds as we take the anticausal part of expression \eqref{eq:ac} to be the strictly positive exponents of $z$. (d) completes the result by using the Neuman series again, defining $\Gamma \defeq \sum_{t=0}^{\infty} \overline{A}^t \overline{B} \widehat{L}_t $, and leveraging Parseval's theorem to conclude the equation of the finite parameter
\begin{equation}\label{eq:finite parameter Gamma}
    \Gamma = \frac{1}{2\pi} \int_{0}^{2\pi} (I-\ejw  \overline{A})^\inv \overline{B} L(\ejw) d\omega.
\end{equation}   
\end{proof}

\paragraph{\textit{Proof of \cref{thm:fixed_point}:}}
Using \cref{eq: finite parameter of anticausal (TL)}, and plugging \eqref{eq:plug S} into \eqref{eq:Nfreq App}, the frequency-domain optimality equation \eqref{eq:Nfreq App} can be reformulated explicitly as follows
\begin{equation}\label{eq:N for corr}
    N_\star(z)=L_\star(z)^\ast L_\star(z)  = \frac{1}{4}\pr{I + \sqrt{ I + 4\gamma_\star^\inv \Gamma_\star^\ast  (z^{-1}I-\overline{A})^{\-\ast}\overline{C}^\ast \overline{C}(z^{\-1}I-\overline{A})^{\-1}\Gamma_\star }}^2
\end{equation}
where $\Gamma_\star$ as in \eqref{eq:finite parameter Gamma}, and $\gamma_\star\>0$ satisfying $\tr\br{((\I-\gamma_\star^\inv \RR_{\K_\star})^\inv- \I)^2 } = r^2$ (or equivalently, $ \BW(\L_\star \L_\star^\ast,\I )=r$), which gives the desired result. 
\qed

\paragraph{\textit{Proof of \cref{thm:irrational}:}}
Notice that the rhs of \eqref{eq:N for corr} involves the positive definite square-root of the rational term $\Gamma_\star^\ast  (z^{-1}I-\overline{A})^{\-\ast}\overline{C}^\ast \overline{C}(z^{\-1}I-\overline{A})^{\-1}\Gamma_\star $. The square root does not preserve rationality in general, implying the desired result.
\qed

\subsection{Proof of Theorem \ref{thm: convergence of FW}}

Before proceeding with the proof, we state the following useful lemma.
\begin{lemma}\label{lem:useful lemma }
For a positive-definite Toeplitz operator $\M\psdg 0$ with $\tr(\M)< \infty$ and $\tr(\log(\M))>-\infty$, let $\M \mapsto \Phi(\M)$ be a mapping defined as 
\begin{equation}\label{eq:wiener-hopf function_method}
    \Phi(\M) \triangleq \inf_{\K \in \causal} \tr\pr{ \RR_\K \M }.
\end{equation}
Denote by $\M = \L \L^\ast$ and $\Delta \Delta^\ast = \I + \F^\ast \F$ the canonical spectral factorizations where $\L$, $\Delta$ as well as their inverses $\L^\inv$, $\Delta^\inv$ are causal operators. The following statements hold:
\begin{itemize}
    \item[i.] The function $\Phi$ can be written in closed form as 
    \begin{equation}
        \Phi(\M) = \tr\br{ \cl{ \Delta \K_\circ \L }_{-}^\ast \cl{ \Delta \K_\circ \L}_{-}}.
    \end{equation}
    \item[ii.] The gradient of $\Phi$ has the following closed form
    \begin{equation}\label{eq: grad Phi}
        \nabla \Phi(\M) = \RR_{\K} = \L^{-\ast} \cl{ \Delta \K_\circ \L }_{-}^\ast \cl{ \Delta \K_\circ \L }_{-} \L^{-1}.
    \end{equation}
    \item[iii.] The function $\Phi$ is concave, positively homogeneous, and
    \begin{equation}\label{eq: grad Phi2}
         \Phi(\M) = \tr(\M \,\nabla \Phi(\M)).
    \end{equation}
\end{itemize}
\end{lemma}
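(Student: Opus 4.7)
My overall strategy is to reuse the Wiener–Hopf calculation from Lemma~\ref{lemma:wiener}, which already identifies both the optimal causal $\K$ for a fixed $\M$ and the residual anti-causal term. Parts (i) and (ii) then reduce to direct substitution, while part (iii) follows from abstract properties of $\Phi$ as an infimum of linear functionals.

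For part (i), I would start from the derivation in Lemma~\ref{lemma:wiener}: cyclicity of the trace plus the factorizations $\M=\L\L^\ast$ and $\Delta^\ast\Delta=\I+\F^\ast\F$ give $\tr(\RR_\K\M)=\Norm[\HS]{\Delta\K\L-\Delta\K_\circ\L}^{2}$. Splitting $\Delta\K_\circ\L=\{\Delta\K_\circ\L\}_{+}+\{\Delta\K_\circ\L\}_{-}$, the causal–anticausal orthogonality in the Hilbert–Schmidt inner product shows that a causal $\K$ can annihilate the causal part but leaves the strictly anti-causal part untouched. Taking the minimizer $\K_\star(\M)=\Delta^{-1}\{\Delta\K_\circ\L\}_{+}\L^{-1}$ yields $\Phi(\M)=\Norm[\HS]{\{\Delta\K_\circ\L\}_{-}}^{2}=\tr\bigl(\{\Delta\K_\circ\L\}_{-}^{\ast}\{\Delta\K_\circ\L\}_{-}\bigr)$.

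For part (ii), note that $\M\mapsto\tr(\RR_\K\M)$ is linear, so $\Phi$ is the infimum of affine functionals and hence concave; by Danskin's theorem (which is cited earlier in the paper), its (Gateaux) gradient is $\nabla\Phi(\M)=\RR_{\K_\star(\M)}$, provided the minimizer is unique — which it is by the strict convexity arising from $\I+\F^\ast\F\psdg 0$ and invertibility of $\L$. The remaining work is to simplify $\RR_{\K_\star}$ using $\K_\star$ from part (i). Computing $\Delta(\K_\star-\K_\circ)=\{\Delta\K_\circ\L\}_{+}\L^{-1}-\Delta\K_\circ=-\{\Delta\K_\circ\L\}_{-}\L^{-1}$ and plugging into $\RR_{\K_\star}=(\K_\star-\K_\circ)^\ast\Delta^\ast\Delta(\K_\star-\K_\circ)$ gives exactly \eqref{eq: grad Phi}. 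The main subtle point here is justifying that the spectral factor $\L$ depends smoothly enough on $\M$ for Danskin's formula to apply in this operator-valued setting, but since the dependence on $\M$ enters $\tr(\RR_\K\M)$ linearly (not through $\L$), Danskin's theorem applies directly without differentiating $\L$.

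For part (iii), concavity is immediate from the infimum-of-linear-functionals representation. Positive homogeneity of degree one follows because replacing $\M$ by $t\M$ for $t>0$ replaces its canonical spectral factor by $\sqrt{t}\,\L$ (by uniqueness of the canonical factorization, up to the convention in Remark~\ref{remark:unique L}), so $\{\Delta\K_\circ(\sqrt{t}\L)\}_{-}=\sqrt{t}\{\Delta\K_\circ\L\}_{-}$, which by part (i) gives $\Phi(t\M)=t\,\Phi(\M)$. Finally, the identity $\Phi(\M)=\tr(\M\,\nabla\Phi(\M))$ is just Euler's relation for positively homogeneous functions, but it also falls out instantly by combining parts (i) and (ii): $\tr(\M\,\nabla\Phi(\M))=\tr(\L\L^\ast\L^{-\ast}\{\Delta\K_\circ\L\}_{-}^\ast\{\Delta\K_\circ\L\}_{-}\L^{-1})=\tr(\{\Delta\K_\circ\L\}_{-}^\ast\{\Delta\K_\circ\L\}_{-})=\Phi(\M)$ by cyclicity. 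The only mild obstacle across the whole proof is verifying trace-class and differentiability technicalities for Toeplitz operators on the doubly-infinite horizon, but these are handled by the standing assumptions $\tr(\M)<\infty$ and $\tr(\log\M)>-\infty$ (guaranteeing a well-defined causal, causally invertible $\L$).
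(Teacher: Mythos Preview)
Your proposal is correct and follows exactly the approach implicit in the paper: the paper states Lemma~\ref{lem:useful lemma } without an explicit proof, but the ingredients you use---the Wiener--Hopf calculation of Lemma~\ref{lemma:wiener} for part~(i), Danskin's theorem (invoked by the paper at~\eqref{eq:wiener-hopf function} and in the proof of Theorem~\ref{thm:kkt}) for part~(ii), and the infimum-of-linear-functionals argument for concavity in part~(iii)---are precisely the tools the paper relies on elsewhere. Your direct verification of $\Phi(\M)=\tr(\M\,\nabla\Phi(\M))$ via cyclicity is clean and matches how the paper manipulates these quantities in the proof of Theorem~\ref{thm: convergence of FW}.
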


\paragraph{\textit{Proof of Theorem \ref{thm: convergence of FW}.}}

Our proof of convergence follows closely from the proof technique used in \cite{jaggi_revisiting_2013}. In particular, since the unit circle is discretized and the computation of the gradients are approximate, the linear suboptimal problem is solved upto an approximation, $\delta_N$ which depends on the problem parameters and the discretization level $N$. Namely, for a large enough $N$, we have
\begin{align}
    \tr(\nabla \Phi(\M_k)  \widetilde{\M}_{k+1}) \geq   \sup_{\substack{\M \in  \Omega_r}} \tr(\nabla \Phi(\M_k)  {\M})  - \delta_N
\end{align}
where
\begin{align}
\Omega_r \defeq \{\M\psdg 0 \mid \tr(\M-2\sqrt{\M}+\I) \leq r^2\} ,   
\end{align}
Therefore, using Theorem 1 of \cite{jaggi_revisiting_2013}, we obtain 
\begin{equation}\label{eq:convergence rate_app}
    %\vspace{-2mm}
        \Phi(\M_\star) - \Phi(\M_k) \leq \frac{2\kappa}{k+2}(1+\delta_N).
       % \vspace{-2mm}
\end{equation}
where $\kappa>0$ is the so-called curvature constant associated with the problem which is defined as follows
\begin{align}
    \kappa &\defeq  \sup_{\substack{\M, \widetilde{\M} \in \Omega_r \\ \eta \in [0,1] \\ \M^\prime = \M + \eta(\widetilde{\M}-\M) }} \frac{2}{\eta^2} \br{-\Phi(\M^\prime) + \Phi(\M) + \tr(\nabla \Phi (\M)\, (\M^\prime - \M)) },\\
    &=\sup_{\substack{\M, \widetilde{\M} \in \Omega_r \\ \eta \in [0,1] \\ \M^\prime = \M + \eta(\widetilde{\M}-\M) }} \frac{2}{\eta^2} \pr{\tr(\M^\prime \nabla \Phi(\M))-\Phi(\M^\prime)},\\
    &=\sup_{\substack{\M, \widetilde{\M} \in \Omega_r \\ \eta \in [0,1] \\ \M^\prime = \M + \eta(\widetilde{\M}-\M) }} \inf_{\K\in\causal} \frac{2}{\eta^2} \tr\pr{\M^\prime ( \nabla \Phi(\M)- \RR_{\K})}
\end{align}
where the last two equalities follow from \cref{lem:useful lemma }.

\section{Algorithms}\label{ap:alg}
\subsection{Pseudocode for Frequency-domain Iterative Optimization Method Solving \eqref{eq:maxmin}}\label{app:detailed pseudocode}
The pseudocode for Frequency-domain tterative optimization method is presented in \cref{alg:fixed_point_detailed}. 
\begin{algorithm}[ht]
   \caption{Frequency-domain iterative optimization method solving \eqref{eq:maxmin}}
   \label{alg:fixed_point_detailed}
\begin{algorithmic}[1]
   \STATE {\bfseries Input:} Radius $r\>0$, state-space model $(A,B_u,B_w)$, discretizations $N\>0$ and $N'\>0$ tolerance $\epsilon\>0$ %\tk{$\{\Delta K_\circ\}_{\!-}$} $(\overline{A}, \overline{B}, \overline{C})$,
   \vspace{0.5mm}
   \STATE Compute $(\overline{A},\overline{B},\overline{C})$ from $(A,B_u,B_w)$ using \eqref{eq:newABC}
   \vspace{0.5mm}
   \STATE Generate frequency samples $\TT_N \defeq \{\e^{j 2\pi n /N} \mid n \!=\! 0,\dots,N\!-\!1\}$
   \vspace{0.5mm}
   \STATE Initialize $M_{0}(z) \gets I$ for $z\in\TT_N$, and $k\gets 0$
   \vspace{0.5mm}
   \REPEAT 
   \vspace{0.5mm}
   \STATE Set the step size $\eta_{k}\gets \frac{2}{k+2}$
   \vspace{0.5mm}
   \STATE Compute the spectral factor $\begin{aligned}L_{k}(z)\gets \texttt{SpectralFactor}(M_{k})\end{aligned}$ (see \cref{alg:spectral factor method})%for $z\in\TT_N$
   \vspace{0.5mm}
   \STATE Compute the parameter $\begin{aligned}\Gamma_k \gets \frac{1}{N} \suml_{z\in \TT_N}  (I-z \overline{A})^{-1} \overline{B} L_k(z) .\end{aligned}$ (see \cref{app: gradients in fw})
   \vspace{1mm}
   \STATE Compute the gradient for $z\in\TT_N$ (see \cref{app: gradients in fw})\\
   \vspace{0.5mm}
    $\begin{aligned}\quad\quad R_k(z) \gets L_k(z)^{-\ast} \cl{  \Delta \K_\circ \L_k }_{-}(z)^\ast\cl{ \Delta \K_\circ \L_k }_{-}(z) L_k(z)^{-1} \end{aligned}$
    \vspace{1mm}
   \STATE Solve the linear subproblem \eqref{eq:linear Opt} via bisection (see \cref{app:bisection})\\
   \vspace{0.5mm}
    $\begin{aligned}\quad\quad \widetilde{M}_{k}(z) \gets (I - \gamma_k^{-1} R_k(z))^{-2}\end{aligned}$ for $z\in\TT_N$ and $\gamma_k \text{ through } \texttt{Bisection}$  
    %$\begin{aligned}\quad\quad \widetilde{M}_{k}(z) \gets \texttt{Bisection}(R_k, r, \epsilon)\end{aligned}$ for $z\in\TT_N$.
    \vspace{0.5mm}
    \STATE Set $M_{k+1}(z)\gets (1-\eta_k) M_k(z) + \eta_k \widetilde{M}_{k}(z) $ for $z\in\TT_N$.
    \vspace{0.5mm}
   \STATE Increment $k\gets k+1$
   \vspace{0.5mm}
   \UNTIL{$\norm{M_{k+1} - M_{k}}/\norm{M_{k}} \leq \epsilon $}
   \vspace{0.5mm}
    \STATE Compute $N_k(z) = \frac{1}{4}\pr{I + \sqrt{ I + 4\gamma_k^{-1} \Gamma_k^\ast  (z^{-1}I-\overline{A})^{\-\ast}\overline{C}^\ast \overline{C}(z^{-1}I-\overline{A})^{-1}\Gamma_k }}^2$ for $z \in \TT_{N'} \defeq \{\e^{j 2\pi n /N'} \mid n \!=\! 0,\dots,N'\!-\!1\}$
    \vspace{0.5mm}
   \STATE Compute $K(z) \gets \texttt{RationalApproximate}(N_k(z))$ (see \cref{app: rational approximation})
\end{algorithmic}
\end{algorithm}

\subsection{Additional Discussion on the Computation of Gradients}\label{app: gradients in fw}
By the Wiener-Hopf technique discussed in \cref{lemma:wiener}, the gradient $\RR_k = \nabla \Phi (\M_k)$ can be obtained as
\begin{align}
    R_k(z) = L_k(z)^{-\ast} \cl{  \Delta \K_\circ \L_k }_{-}(z)^\ast\cl{ \Delta \K_\circ \L_k }_{-}(z) L_k(z)^{-1} ,
\end{align}
where $\L_k \L_k^\ast= \M_k$ is the unique spectral factorization. Furthermore, using \eqref{eq:ad},\eqref{eq:finite parameter Gamma}, we can reformulate the gradient $ R_k(z)$ more explicitly as
\begin{align}
    R_k(z) = L_k(z)^{-\ast} \Gamma_k^\ast ( I \- z\overline{A})^{\-\ast}\overline{C}^\ast \,  \overline{C} ( I \- z\overline{A})^\inv \Gamma_k     L_k(z)^{-1},
\end{align}
where
$\Gamma_{k} = \frac{1}{2\pi} \int_{0}^{2\pi} (I-\e^{j\omega} \overline{A})^\inv \overline{B} L_k(\e^{j\omega}) d\omega$ as in \eqref{eq:finite parameter Gamma}.
Here, the spectral factor $L_k(z)$ is obtained for $z\in\TT_N$ by \cref{alg:spectral factor method}. Similarly, the parameter $\Gamma_k$ can be computed numerically using trapezoid rule over the discrete domain $\TT_N$, \ie,
\begin{equation}
         \Gamma_k \gets \frac{1}{N} \sum_{z\in \TT_N}  (I-z \overline{A})^\inv \overline{B} L_k(z) .
\end{equation}
The gradient $R_k(z)$ can thus be efficiently computed for $z\in\TT_N$.

\subsection{Implementation of Spectral Factorization}\label{app: spectral factorization}

To perform the spectral factorization of an irrational function $M(z)$, we use a spectral factorization method via discrete Fourier transform, which returns samples of the spectral factor on the unit circle. First, we compute $\Lambda(z)$ for $z\in\TT_N$, which is defined to be the logarithm of $M(z)$, then we take the inverse discrete Fourier transform $\lambda_k$ for $k=0,\dots,N-1$ of $\Lambda(z)$ which we use to compute the spectral factorization as $$L(z_n) \gets \exp\pr{\frac{1}{2} \lambda_0 + \sum_{k=1}^{N/2-1} \lambda_k z_n^{-k} + \frac{1}{2} (-1)^{n} \lambda_{N/2}}$$ for $k=0,\dots,N-1$ where $z_n= \e^{j 2\pi n /N}$ .

The method is efficient without requiring rational spectra, and the associated error term, featuring a purely imaginary logarithm, rapidly diminishes with an increased number of samples. It is worth noting that this method is explicitly designed for scalar functions.

\begin{algorithm}[ht]\label{alg:spectral factor method}
   \caption{\texttt{SpectralFactor}: Spectral Factorization via DFT}
\begin{algorithmic}[1]
   \STATE {\bfseries Input:} Scalar positive spectrum $M(z)>0$ on  $\TT_N \defeq \{\e^{j 2\pi n /N} \mid n \!=\! 0,\dots,N\!-\!1\}$ 
   \vspace{0.5mm}
   \STATE {\bfseries Output:} Causal spectral factor $L(z)$ of $M(z)>0$ on  $\TT_N$ 
   \vspace{0.5mm}
   \STATE Compute the cepstrum $\begin{aligned}\Lambda(z) \gets \log(M(z))\end{aligned}$ on $z\in \TT_N$.
   \vspace{0.5mm}
   \STATE Compute the inverse DFT  \\
    \vspace{0.5mm}
    $\begin{aligned}\lambda_k \gets \operatorname{IDFT}(\Lambda(z))\end{aligned}$ for $k=0,\dots,N\!-\!1$
   \vspace{0.5mm}
   \STATE Compute the spectral factor for $z_n= \e^{j 2\pi n /N}$ \\
    \vspace{0.5mm}
   $\begin{aligned}L(z_n) \gets \exp\pr{\frac{1}{2} \lambda_0 + \sum_{k=1}^{N/2-1} \lambda_k z_n^{-k} + \frac{1}{2} (-1)^{n} \lambda_{N/2}} \end{aligned}$, \quad $n=0,\dots,N\!-\!1$
\end{algorithmic}
\end{algorithm}

\subsection{Implementation of Bisection Method}\label{app:bisection}

To find the optimal parameter $\gamma_k$ that solves $\tr\br{((I\-\gamma_k^\inv \RR_k)^{-1} \- I)^2} \= r^2$ in the Frank-Wolfe update \eqref{eq:frank wolfe frequency}, we use a bisection algorithm. The pseudo code for the bisection algorithm can be found in Algorithm \ref{alg:bisection}. We start off with two guesses of $\gamma$ \ie ($\gamma_{left}, \gamma_{right}$) with the assumption that the optimal $\gamma$ lies between the two values (without loss of generality).
% \tk{pseudocode}
\begin{algorithm}[H]
\caption{\texttt{Bisection Algorithm}}\label{alg:bisection}
\begin{algorithmic}[1]
   \STATE {\bfseries Input: $h(\gamma), \gamma_{right}, \gamma_{left}$} 
   
   \vspace{0.5mm}
   \STATE Compute the gradient at $\gamma_{right}$: $\nabla h(\gamma)|_{\gamma_{right}}$
    % \gets (r^2 - 1) + 2 \cdot \tr \sqrt M - \tr M$

   \vspace{0.5mm}

   \vspace{0.5mm}
   \WHILE {$\mid \gamma_{right} - \gamma_{left} \mid > \epsilon$}
       \STATE Calculate the midpoint $\gamma_{mid}$ between $\gamma_{left}$ and $\gamma_{right}$
       \STATE Compute the gradient at $\gamma_{mid}$: $\nabla h(\gamma)|_{\gamma_{mid}}$
       
       \vspace{0.5mm}
       \IF {$\nabla h(\gamma)|_{\gamma_{mid}}$ = 0}
           \STATE \textbf{return} $\gamma_{mid}$ 
       \ELSIF {$\nabla h(\gamma)|_{\gamma_{mid}} > 0$}
           \STATE Update $\gamma_{right}$ to $\gamma_{mid}$
       \ELSE
           \STATE Update $\gamma_{left}$ to $\gamma_{mid}$
       \ENDIF
   \ENDWHILE
   
   \vspace{0.5mm}
   \STATE \textbf{return} the average of $\gamma_{left}$ and $\gamma_{right}$ 
\end{algorithmic}
\end{algorithm}

\subsection{Implementation of Rational Approximation}\label{app: rational approximation}
We present the pseudocode of \texttt{RationalApproximation}.
\begin{algorithm}[ht]\label{alg:rational approximation method}
   \caption{\texttt{RationalApproximation}}
\begin{algorithmic}[1]
   \STATE {\bfseries Input:} Scalar positive spectrum $N(z)>0$ on  $\TT_{N'} \defeq \{\e^{j 2\pi n /N'} \mid n \!=\! 0,\dots,N'\!-\!1\}$, and a small positive scalar $\epsilon$
   \vspace{0.5mm}
   \STATE {\bfseries Output:} Causal controller $K(z)$ on  $\TT_{N'}$ 
   \vspace{0.5mm}
   \STATE  Get $P(z),Q(z)$  by solving the convex optimization in \eqref{eq:hinf_rational_opt}, for \emph{fixed} $\epsilon$, given $M(z)$, \ie:
   \begin{align}   
    &\min_{ \substack{p_0,\dots p_m \in \R, q_0,\dots q_m \in \R, \eps \geq0 }} \; \eps 
    \\&\subjto \quad \begin{aligned}
    &q_0 = 1, & P(z), Q(z)>0,\quad   &  P(z) \- \pr{ N(z)  \+ \epsilon } {Q(z)}\leqq 0 , & {P(z)} \- \pr{ N(z) \- \epsilon } {Q(z)}\geqq 0 \quad \forall z\in \TT_{N'}\nonumber
    \end{aligned}
\end{align}
    \vspace{-5mm}
   \vspace{0.5mm}
   \STATE Get the rational spectral factors of $P(z),Q(z)$, which are $S_P(z),S_Q(z)$ using the canonical Factorization method in \cite{sayed_survey_2001}\\
   \vspace{0.5mm}
   \STATE Get $L^r(z)$,the rational spectral factor of$N(z)$, as $S_P(z)/S_Q(z)$\\
   \STATE Get $K(z)$ from the formulation in  \eqref{eq:RationalK},\eqref{eq:Kstatespace}
\end{algorithmic}
\end{algorithm}

\section{Proof of the State-Space Representation of the Controller} \label{ap:rational}

\subsection{Proof of \cref{lemma:state space K}}
Let the spectral factor $\Tilde L(z)$ in rational form be given as \begin{equation}\Tilde L(z)=(I+\Tilde{C} (z I -\Tilde{A})^{-1}\Tilde{B})\Tilde{D}^{1/2},\end{equation} with its inverse given by:
\begin{align}\label{eq:linv}
    \Tilde L^{-1}(z)=\Tilde{D}^{-1/2}(I-\Tilde{C}(zI- (\Tilde{A}-\Tilde{B}\Tilde{C}))^{-1}\Tilde{B}),
\end{align}
and its operator form denoted by $\Tilde \L$.

We write the DR-RO controller, $K(z)$, as a sum of causal functions:
\begin{align}
      K(z)&=\Delta^{-1}(z) \{\Delta \K_\circ \Tilde \L\}_{+}(z)\Tilde L^{-1}(z)\\
      &=\Delta^{-1}(z) \left(\{\Delta \K_\circ \}_{+}(z)\Tilde L(z)+ \{\{\Delta \K_\circ \}_{-}\Tilde \L\}_{+}(z)\right)\Tilde L^{-1}(z)\\
      &=\Delta^{-1}(z) \{\Delta \K_\circ \}_{+}(z) + \Delta^{-1} \{\{\Delta \K_\circ \}_{-}\Tilde \L\}_{+}(z)\Tilde L^{-1}(z)\label{eq:last}.
\end{align}
%where we drop the dependence of $\Delta, K_\circ$ and $\Tilde L$ on $z$. 

From Lemma 4 in \cite{sabag2023regretoptimal}, we have:
\begin{align}\label{eq:dl}
    \{\Delta \K_\circ\}_{-}(z)=- \Bar{R} B_u^{\ast} (z^{\-1}I-A_k^\ast)^{-1}A_k^\ast P B_w
\end{align}

where the LQR controller is defined as $K_{lqr} = (I+B_u^\ast P B_u)^{-1}B_u^\ast P A$ and the closed-loop
matrix $A_K = A-B_u K_{lqr}$ with $P \succ 0$ is the unique stabilizing solution to the LQR Riccati equation $P = Q + A^\ast P A - A^\ast P B_u(I + B_u^\ast P B_u)^{-1}B_u^\ast P A$, $Q=C^\tp C,$ and with $\Bar{R}=(I+B_u^\ast P B_u)^{-\ast/2}$.

Multiplying equation \eqref{eq:dl} with $\Tilde L$, and taking its causal part, we get:
\begin{align}
    \{\{\Delta \K_\circ\}_{-}\Tilde \L\}_{+}(z)= \{- \Bar{R} B_u^\ast (z^{\-1}I-A_k^\ast)^{-1}A_k^\ast P B_w \Tilde{C} (zI-\Tilde{A})^{-1}\Tilde{B}\Tilde{D}^{1/2}- \Bar{R}B_u^\ast (z^{\-1}I-A_k^\ast)^{-1}A_k^\ast P B_w \Tilde{D}^{1/2}  \}_{+}.
\end{align}

Given that the term $\Bar{R}B_u^\ast (z^{\-1}I-A_k^\ast)^{-1}A_k^\ast P B_w \Tilde{D}^{1/2}$ is strictly anticausal, and considering the matrix $\Tilde{U}$ which solves the lyapunov equation: $A_k^\ast P B_w \Tilde{C }+ A_k^\ast \Tilde{U} A=\Tilde{U}$, we get $\{\{\Delta \K_\circ\}_{-}\Tilde \L\}_{+}(z)$ as:

\begin{align}
    \{\{\Delta \K_\circ\}_{-}\Tilde \L\}_{+}(z)&=  \{- \Bar{R} B_u^\ast ((z^{\-1}I-A_k^\ast)^{-1}A_k^\ast \Tilde{U} + \Tilde{U} \Tilde{A}(zI-\Tilde{A})^{-1}+\Tilde{U})\Tilde{B}\Tilde{D}^{1/2} \}_{+}\\
    &=- \Bar{R} B_u^\ast \Tilde{U} (\Tilde{A}(zI-\Tilde{A})^{-1}+I)\Tilde{B}\Tilde{D}^{1/2}\\
    &=-z \Bar{R} B_u^\ast \Tilde{U} (zI-\Tilde{A})^{-1}\Tilde{B}\Tilde{D}^{1/2} \label{eq:dl_linv}
\end{align}

Now, multiplying equation \eqref{eq:dl_linv} by the inverse of $\Tilde L$ \eqref{eq:linv}, we get:
\begin{align}
    \{\{\Delta \K_\circ\}_{-}\Tilde \L\}_{+}(z)\Tilde L^{-1}(z)&=-z \Bar{R}B_u^\ast \Tilde{U} (zI-\Tilde{A})^{-1} \Tilde{B} (I+\Tilde{C}(zI-\Tilde{A})^{-1}\Tilde{B})^{-1}\\
    &=
    -z \Bar{R}B_u^\ast \Tilde{U} (zI-\Tilde{A})^{-1} (I+\Tilde{B}\Tilde{C}(zI-\Tilde{A})^{-1})^{-1}\Tilde{B}\\
    &=-z \Bar{R} B_u^\ast \Tilde{U} (zI-\Tilde{A}_k)^{-1}\Tilde{B}\\
    &=-\Bar{R }B_u^\ast \Tilde{U} (I+(zI-\Tilde{A}_k)^{-1}\Tilde{A}_k)\Tilde{B}
\end{align}
where $\Tilde{A}_k=\Tilde{A}-\Tilde{B}\Tilde{C}$.

The inverse of $\Delta$ is given by $\Delta^{-1}(z)=(I- K_{lqr} (zI-A_k)^{-1}B_u)\Bar{R}^\ast$, and we know from lemma 4 in \cite{sabag2023regretoptimal} that $\{\Delta \K_\circ\}_{+}(z)=-\Bar{R}B_u^\ast P A (zI-A)^{-1}B_w - \Bar{R} B_u^\ast P B_w$. 

Then we can get the 2 terms of equation \eqref{eq:last}: 

\begin{equation}\label{eq:1}
    \Delta^{-1}(z)\{\Delta \K_\circ\}_{+}(z)=-K_{lqr}(zI-A_k)^{-1}(B_w-B_u \Bar{R}^\ast \Bar{R}B_u^\ast P B_w)-\Bar{R}^\ast \Bar{R}B_u^\ast P B_w
\end{equation}

and 
\begin{align}\label{eq:2}
    \Delta^{-1}(z)\{\{\Delta \K_\circ\}_{-}\Tilde \L\}_{+}(z)\Tilde L^{-1}(z)&= - ( I-K_{lqr}(zI-A_k)^{-1}B_u )\Bar{R}^\ast \Bar{R}B_u^\ast \Tilde{U} (zI-\Tilde{A}_k)^{-1}\Tilde{A}_k \Tilde{B}\\
    &+ K_{lqr}(zI-{A}_k)^{-1}B_u \Bar{R}^\ast \Bar{R}B_u^\ast \Tilde{U} \Tilde{B}\\
    &-\Bar{R}^\ast \Bar{R}B_u^\ast \Tilde{U} \Tilde{B}
\end{align}

Finally, summing equations \eqref{eq:1} and \eqref{eq:2}, we get the controller $K(z)$ in its rational form:
\begin{align}\label{eq:Kstatespace}
K(z)&=\underbrace{-\begin{bmatrix}
           \Bar{R}^\ast \Bar{R} B_u^\ast&-K_{lqr}
        \end{bmatrix}}_{\widetilde{H}} (zI-\underbrace{\begin{bmatrix}\Tilde{A_K} &0\\ B_u \Bar{R}^\ast \Bar{R} B_u^\ast & A_k\end{bmatrix}}_{\widetilde{F}})^{-1} \underbrace{\begin{bmatrix}\Tilde{A_K}\Tilde{B}\\-B_w+B_u\Bar{R}^\ast \Bar{R}B_u^\ast(P B_w+U_1 \Tilde{B}) \end{bmatrix}}_{\widetilde{G}} \nonumber \\
        &\underbrace{- \Bar{R}^\ast \Bar{R} B_u^\ast (PB_w+U_1 \Tilde{B})}_{\widetilde{J}}
\end{align}
which can be explicitly rewritten as in \eqref{eq:RationalK}.

\section{SDP Formulation for the Finite Horizon from \cite{cornell_drro_old}}
In this section, we state the SDP formulation of the finite-horizon DR-RO control problem  for a fixed horizon $T\>0$ presented in \cite{cornell_drro_old}, which is the main controller we compare against, to showcase the value of the infinite-horizon setting. 
This result highlights the triviality of non-causal estimation as opposed to causal estimation. In \cref{thm:finite horizon sdp}, we demonstrate that the finite-horizon DR-RO problem reduces to an SDP.

\begin{problem}[\textbf{Distributionally Robust Regret-Optimal (\DRO) Control in the Finite Horizon }]\label{prob:DR-RO-finite}
    Find a casual and time-invariant controller, $\K_T\in\causal_T$, that minimizes the worst-case expected regret in the finite horizon~\eqref{eq:worst case exp regret finite}, \ie,
    \begin{equation} \label{eq:DR-RO-finite}\vspace{-2mm}
    \inf_{\K_T \in \causal_T} {R}(\K_T,r)
\end{equation}
\end{problem}

\begin{theorem}[Adapted from \cite{cornell_drro_old}. An SDP formulation for finite-horizon DR-RO] \label{thm:finite horizon sdp}
    Let the horizon $T\>0$ be fixed and given the noncausal controller $\K_{\circ,T} \defeq - (\I_T + \F_T^\ast \F_T)^{\-1}\F_T^\ast \G_T,$ the \cref{prob:DR-RO-finite} reduces to the following SDP
    \begin{equation*}\label{eq:finite horizon sdp}
        \inf_{\substack{\K_T \in \causal_T,\\ \gamma\geq 0,\, \mathcal X_T \succ 0 }}\!\!\gamma (r_T^2 \-  \tr(\I_T)) \+ \tr(\mathcal X_T )\;\; \subjto \;\;  
        \begin{bmatrix}
         \mathcal X_T \!&\! \gamma \I_T \!&\! 0 \\
         \gamma \I_T  \!&\! \gamma \I_T \!&\!  (\K_T - \K_{\circ,T})^\ast \\
         0 \!&\! \K_T - \K_{\circ,T} \!&\! (\I_T\+\F_T^\ast\F_T)^\inv
        \end{bmatrix}\! \psdgeq\!0.
    \end{equation*}
    Moreover, the worst-case disturbance $\w_T^\star$ can be identified from the nominal disturbances $\w_{\circ,T}$ as $\w_T^\star = (\I_T-\gamma_{\star}^\inv \T_{\K_{\circ,T}}^\ast \T_{\K_{\circ,T}})^\inv \w_{\circ,T}$ where $\gamma_\star\>0$ and $\K_T^\star$ are the optimal solutions.

\end{theorem}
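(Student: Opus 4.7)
The plan is to derive the SDP by first dualizing the inner worst-case expectation through a finite-horizon analogue of Theorem 3.1, and then applying the Schur complement lemma twice: once to linearize the trace-of-inverse term in the objective, and once to eliminate the quadratic dependence of $\RR_{\K_T}$ on $\K_T$. Since the nominal covariance here is $\M_{\circ,T} = \I_T$, the inner problem will admit a clean scalar dual with multiplier $\gamma\geq 0$.

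First, I would establish the finite-horizon strong duality representation
\begin{equation*}
R(\K_T, r_T) = \inf_{\gamma \geq 0,\, \gamma\I_T \succ \RR_{\K_T}} \gamma\pr{r_T^2 - \tr(\I_T)} + \gamma\tr\br{(\I_T - \gamma^{-1}\RR_{\K_T})^{-1}},
\end{equation*}
which is precisely the finite-horizon precursor of Theorem 3.1 (see \citep{DRORO}) and follows from Fenchel duality applied to the Wasserstein-2 DRO problem with quadratic cost. With $\RR_{\K_T} = (\K_T - \K_{\circ,T})^\ast(\I_T + \F_T^\ast\F_T)(\K_T - \K_{\circ,T})$ taken as the finite-dimensional regret operator, the inner constraint $\gamma\I_T \succ \RR_{\K_T}$ guarantees that the resolvent is well-defined.

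Next, I would epigraphically reformulate the trace term by introducing a matrix variable $\mathcal X_T \succeq 0$ satisfying $\mathcal X_T \succeq \gamma^2(\gamma\I_T - \RR_{\K_T})^{-1}$, exploiting the identity $\gamma(\I_T - \gamma^{-1}\RR_{\K_T})^{-1} = \gamma^2(\gamma\I_T - \RR_{\K_T})^{-1}$. By the Schur complement lemma, this matrix inequality is equivalent to the $2\times 2$ block LMI with diagonal blocks $\mathcal X_T$ and $\gamma\I_T - \RR_{\K_T}$ and off-diagonal block $\gamma\I_T$. To then remove the remaining quadratic term $\RR_{\K_T}$ from the $(2,2)$ block, I would apply the Schur complement once more using the factorization of $\RR_{\K_T}$; this exchanges the block $\gamma\I_T - \RR_{\K_T}$ for the $2\times 2$ block matrix with diagonal entries $\gamma\I_T$ and $(\I_T+\F_T^\ast\F_T)^{-1}$ and off-diagonal $\K_T - \K_{\circ,T}$. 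Assembling these two Schur reductions yields the $3\times 3$ LMI stated in the theorem, and rewriting the objective as $\gamma(r_T^2 - \tr(\I_T)) + \tr(\mathcal X_T)$ gives an SDP that is jointly convex in $(\K_T, \gamma, \mathcal X_T)$.

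Finally, the worst-case disturbance identity $\w_T^\star = (\I_T - \gamma_\star^{-1} \RR_{\K_T^\star})^{-1}\w_{\circ,T}$ drops out of the KKT conditions of the inner Wasserstein problem, exactly as in Theorem 3.1: the optimal transport plan from $\Pr_{\circ,T}$ to the worst-case distribution is affine with transport map $(\I_T - \gamma_\star^{-1}\RR_{\K_T^\star})^{-1}$, though the stated expression appears to involve a typo since the $\RR_{\K_T^\star}$ block should replace $\T_{\K_{\circ,T}}^\ast\T_{\K_{\circ,T}}$. The main obstacle I anticipate is verifying that the minimization over $\K_T$ and the minimization over $(\gamma,\mathcal X_T)$ can legitimately be combined into a single joint SDP; this requires checking Slater's condition, i.e., that there exists a causal $\K_T$ and a $\gamma$ large enough that $\gamma\I_T \succ \RR_{\K_T}$ strictly holds, so that the inner dual is attained and strong duality propagates through the outer $\inf_{\K_T}$.
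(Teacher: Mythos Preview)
The paper does not actually provide its own proof of this theorem; it is stated in the appendix as ``Adapted from \cite{cornell_drro_old}'' and simply quoted for comparison purposes in the simulations, so there is no paper proof to benchmark against. Your derivation is correct and is the standard route to this SDP: invoke the finite-horizon strong-duality formula (the finite-$T$ precursor of \cref{thm:strong_duality}, i.e., \citep[Thm.~2]{DRORO}), introduce the epigraph variable $\mathcal X_T$ for the trace-of-inverse term, and unwind via two nested Schur complements --- first on the $(3,3)$ block $(\I_T+\F_T^\ast\F_T)^{-1}$ to collapse the quadratic $\RR_{\K_T}$, then on the resulting $(2,2)$ block $\gamma\I_T-\RR_{\K_T}$ to recover the epigraph inequality $\mathcal X_T \psdgeq \gamma^2(\gamma\I_T-\RR_{\K_T})^{-1}$. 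Your observation that the displayed worst-case disturbance map should read $(\I_T-\gamma_\star^{-1}\RR_{\K_T^\star})^{-1}$ rather than the printed $\T_{\K_{\circ,T}}^\ast\T_{\K_{\circ,T}}$ is also on point and consistent with \cref{thm:strong_duality}. The Slater point you flag is immediate: any causal $\K_T$ gives bounded $\RR_{\K_T}$, so choosing $\gamma > \norm[\op]{\RR_{\K_T}}$ yields strict feasibility.
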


Note that the scaling of the SDP in \cref{thm:finite horizon sdp} with the time horizon is prohibitive for many time-critical real-world applications. Therefore, we compare our infinite-horizon controller to the finite-horizon one in the simulation sections \ref{sec:simul} and \ref{ap:sim}.

\section{Additional Simulations} \label{ap:sim}

\subsection{Note on Comparison with Other Methods in the Literature:}
As our work is the first to explore infinite-horizon distributionally robust control, our comparative experiments are constrained by the existing literature on finite-horizon distributionally robust control. Since the closest work to ours is that of \cite{DRORO}, our numerical experiments primarily compare with their finite-horizon version that utilizes an SDP formulation. 

Unfortunately, the framework in \cite{tacskesen2023distributionally} only allows for time-independent disturbances. While this approach is valuable for partially observed systems, it is widely acknowledged that the optimal distributionally robust controller for fully observed systems remains the same as the standard LQR controller as long as the disturbances are independent (though not necessarily identical) \cite{blackbook}. Therefore, in our setup, the results from Taskesen et al. simply reduce to the optimal LQR controller. This observation has also been noted in Taskesen et al.

While in the main text we simulated under the worst-case distributions corresponding to each controller being compared, we include in this section of the appendix other systems under the worst-case distributions, and also under other disturbance realizations (namely sinusoidal and uniform distributions).

\subsection{Additional Time Domain and Frequency Domain Simulations}
\paragraph{Time domain simulations:} 
We repeat the same experiment of section \ref{sec:simul} for 2 more systems, [REA4] and [HE3] \cite{aircraft}. [REA4] is a SISO system with 8 states and a stable $A$ matrix, while [HE3] has 4 states and an unstable $A$ matrix. The results are shown in figures \ref{fig:time_domain2},\ref{fig:time_domain3}. Similarly to our previous discussion, the infinite horizon DRRO controller achieves good performance across all systems, achieving the lowest cost under all considered noise scenarios.

\begin{figure*}[htbp]
    \centering
    \begin{subfigure}[b]{0.48\textwidth}
        \centering
        \includegraphics[width=0.8\textwidth]{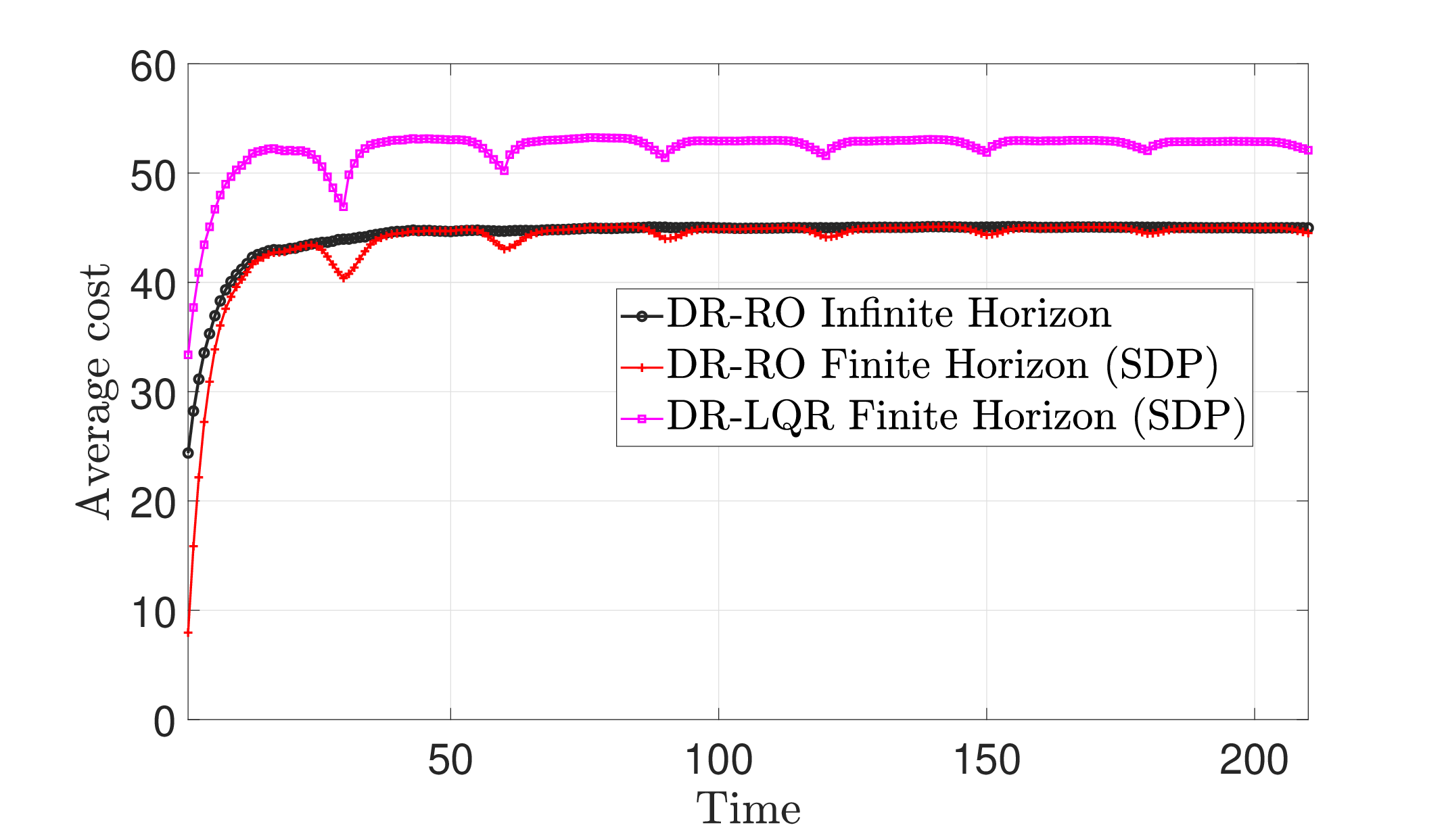}
        \caption{White noise}
        \label{fig:fig2}
    \end{subfigure}
    \hfill
    \begin{subfigure}[b]{0.48\textwidth}
        \centering
        \includegraphics[width=0.8\textwidth]{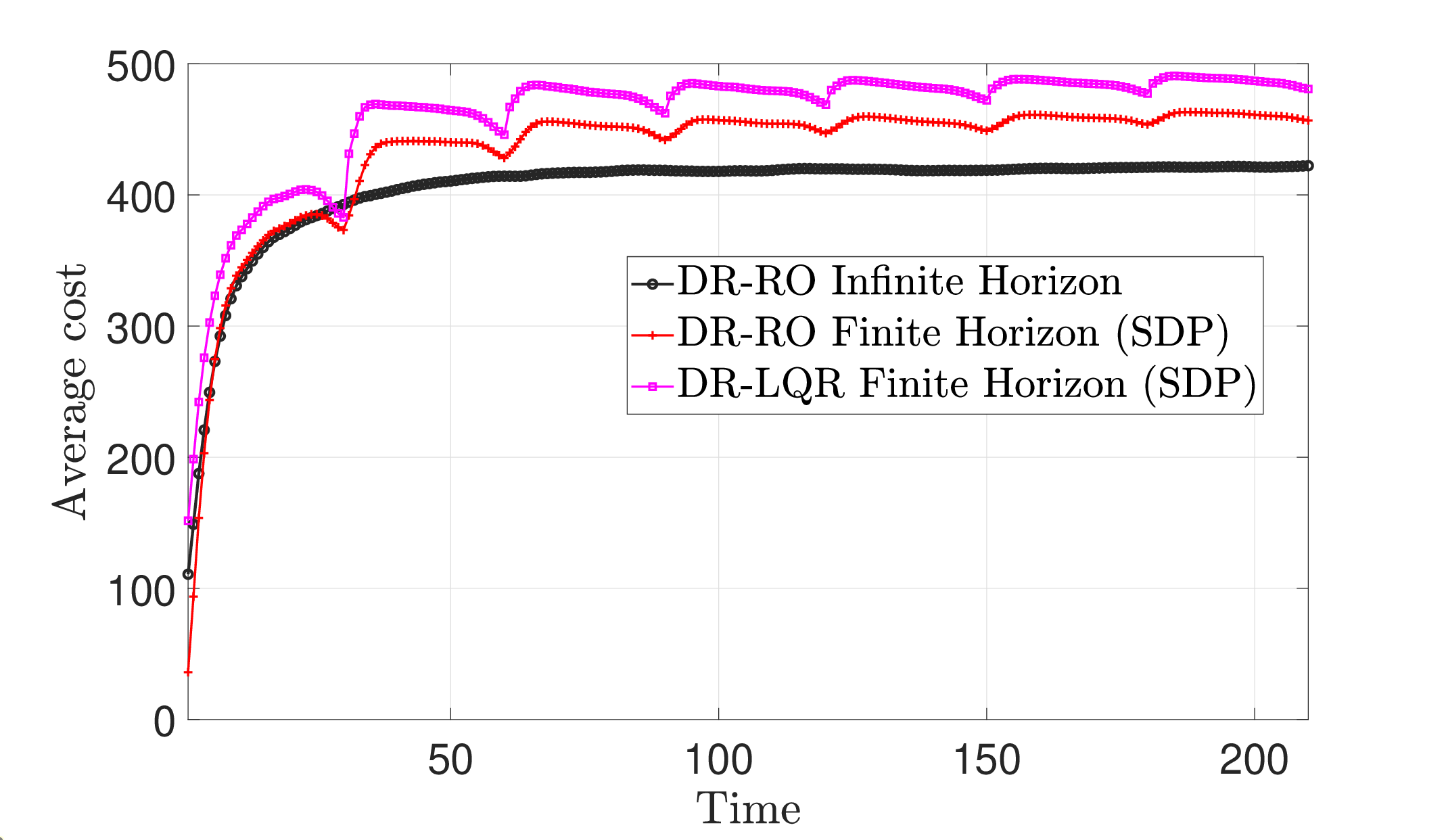}
        \caption{Worst disturbance for (I)}
        \label{fig:fig1}
    \end{subfigure}

    \vfill

    \begin{subfigure}[b]{0.48\textwidth}
        \centering
        \includegraphics[width=0.8\textwidth]{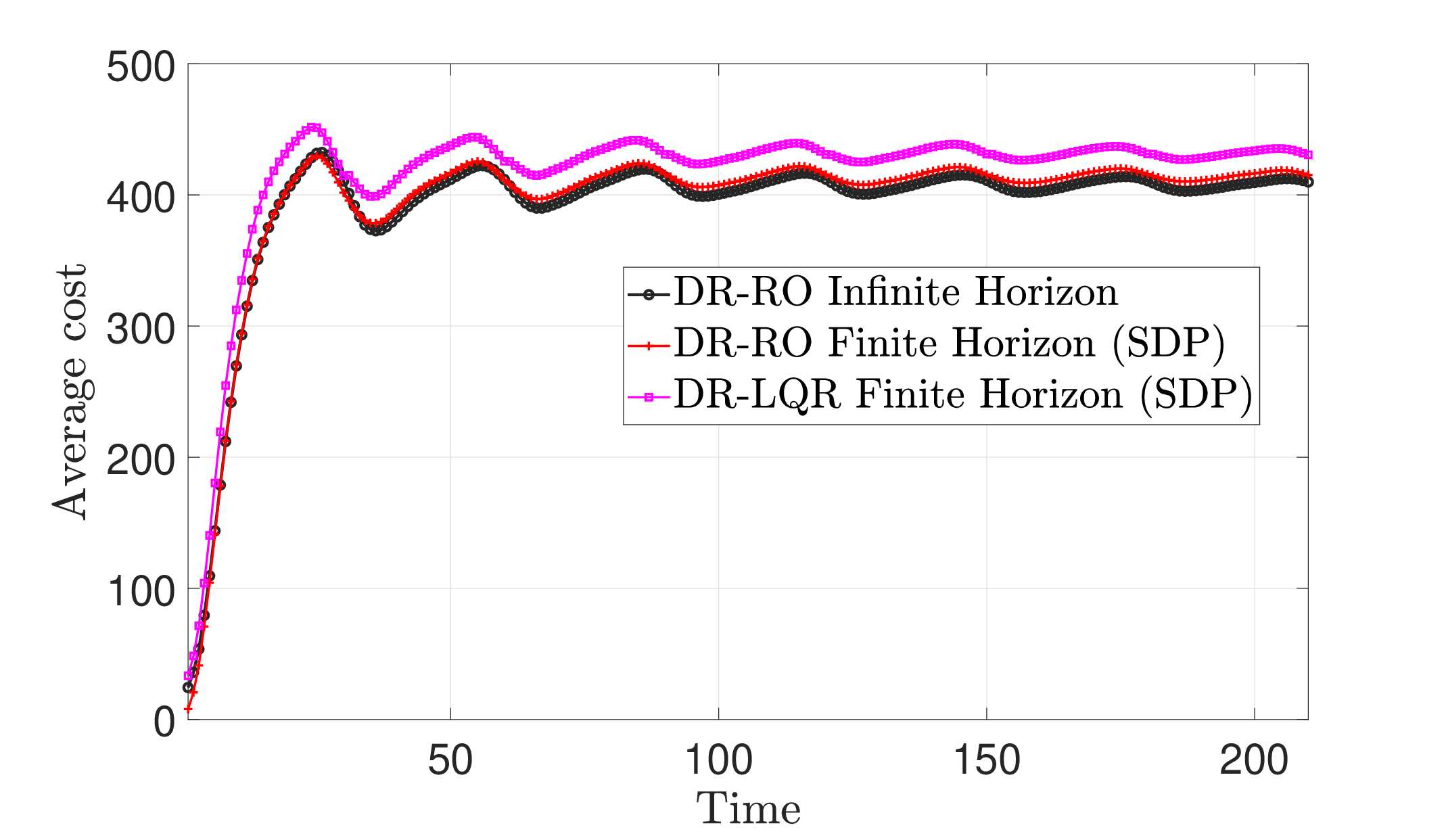}
        \caption{Worst disturbance for (II)}
        \label{fig:fig3}
    \end{subfigure}
    \hfill
    \begin{subfigure}[b]{0.48\textwidth}
        \centering
        \includegraphics[width=0.8\textwidth]{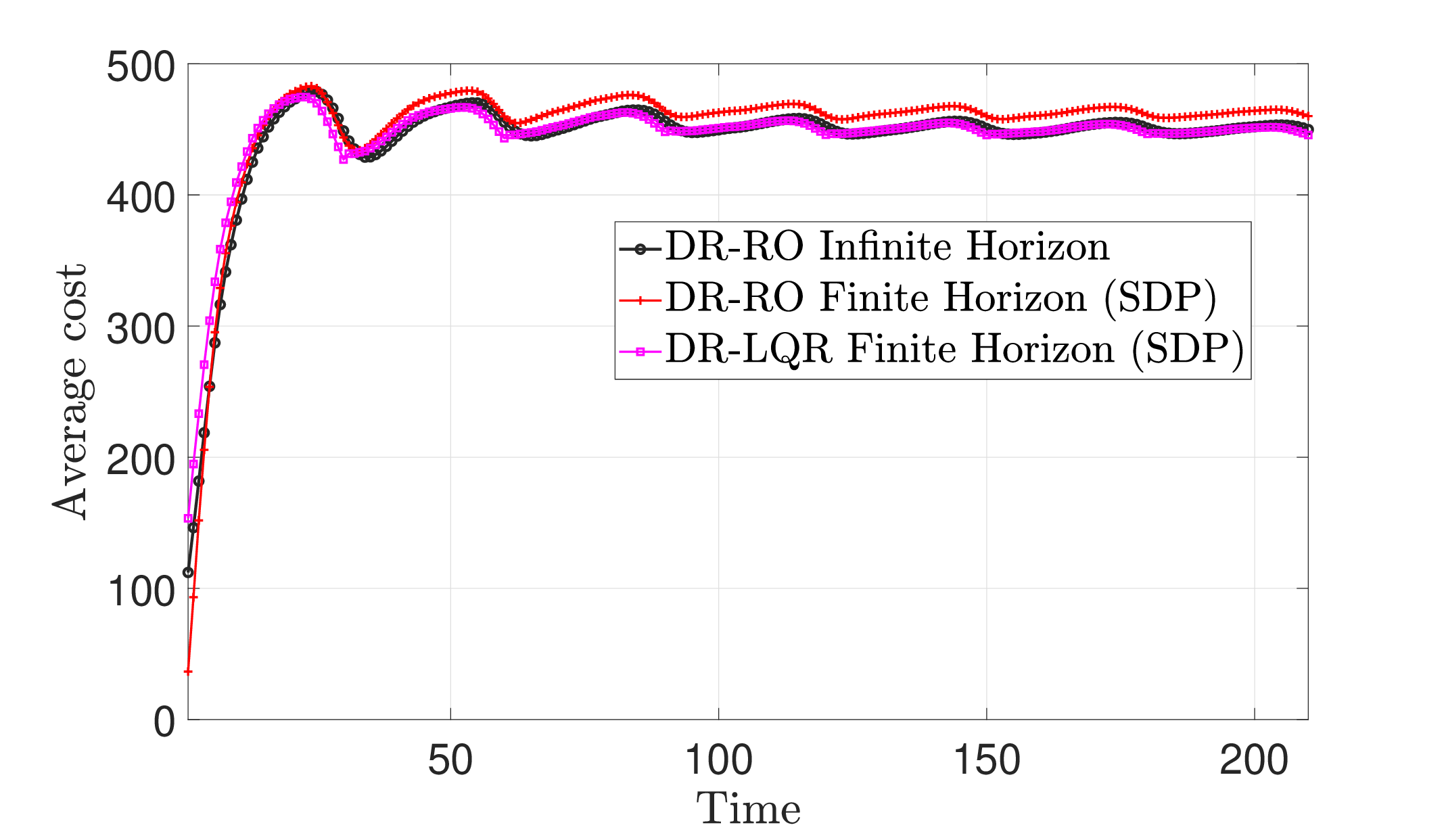}
        \caption{Worst disturbance for (III)}
        \label{fig:fig4}
    \end{subfigure}

    \caption{The control costs of the different DR controllers: (I) DR-RO in infinite horizon, (II) DR-RO in finite horizon and (III) DR-LQR in finite horizon under different disturbances for system [REA4] \cite{aircraft}. (a) is white noise, while (b), (c) and (d) are worst-case disturbances for each of the controllers, for $r=1.5$. The finite-horizon controllers are re-applied every $s=30$ steps. For all disturbances, the infinite horizon DRRO controller achieves lowest average cost, even in cases (c) and (d) where the finite horizon DR controllers are designed to minimize the cost.}
    \label{fig:time_domain2}
\end{figure*}

\begin{figure*}[htbp]
    \centering
    \begin{subfigure}[b]{0.48\textwidth}
        \centering
        \includegraphics[width=0.8\textwidth]{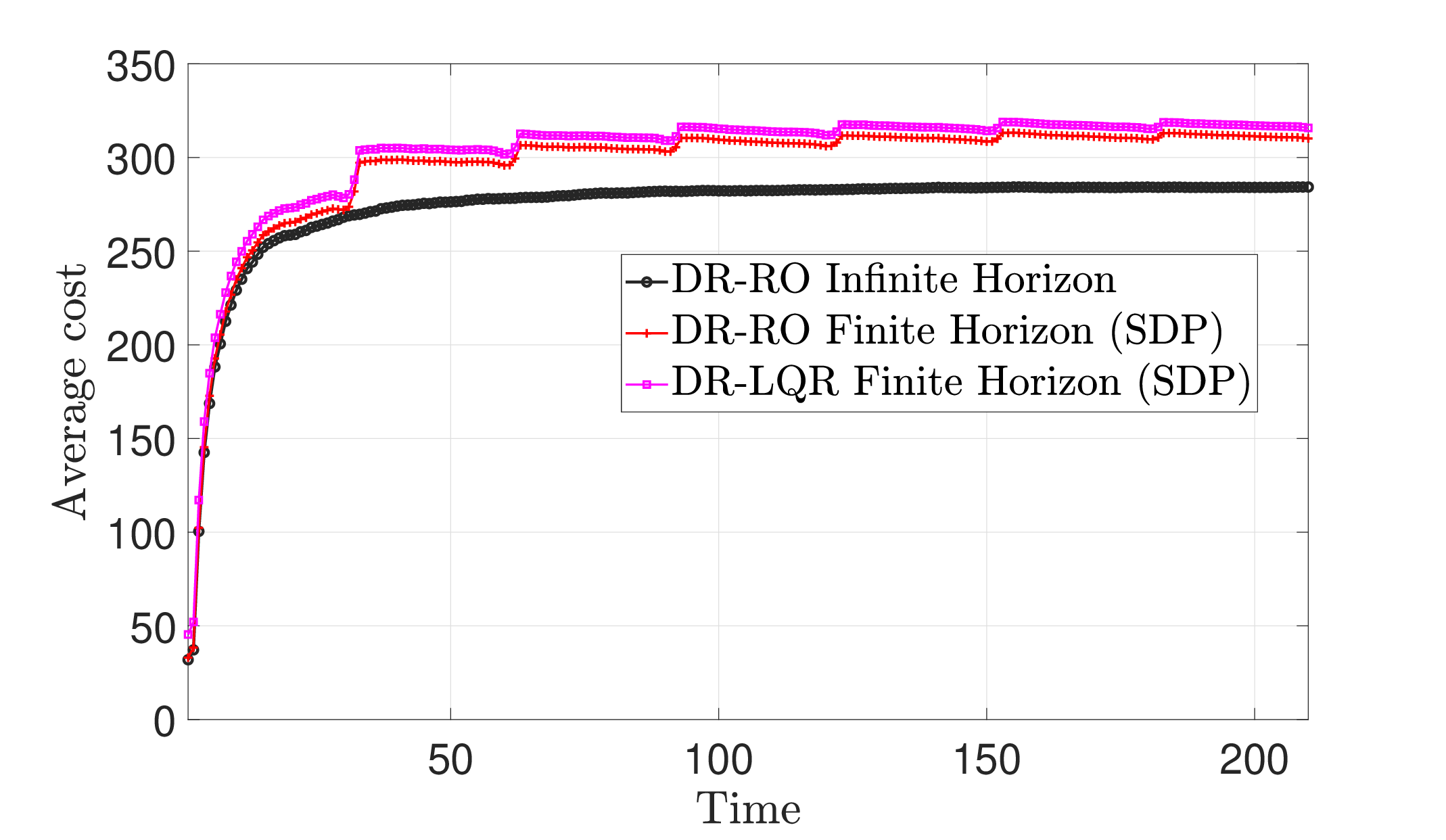}
        \caption{White noise}
        \label{fig:fig5}
    \end{subfigure}
    \hfill
    \begin{subfigure}[b]{0.48\textwidth}
        \centering
        \includegraphics[width=0.8\textwidth]{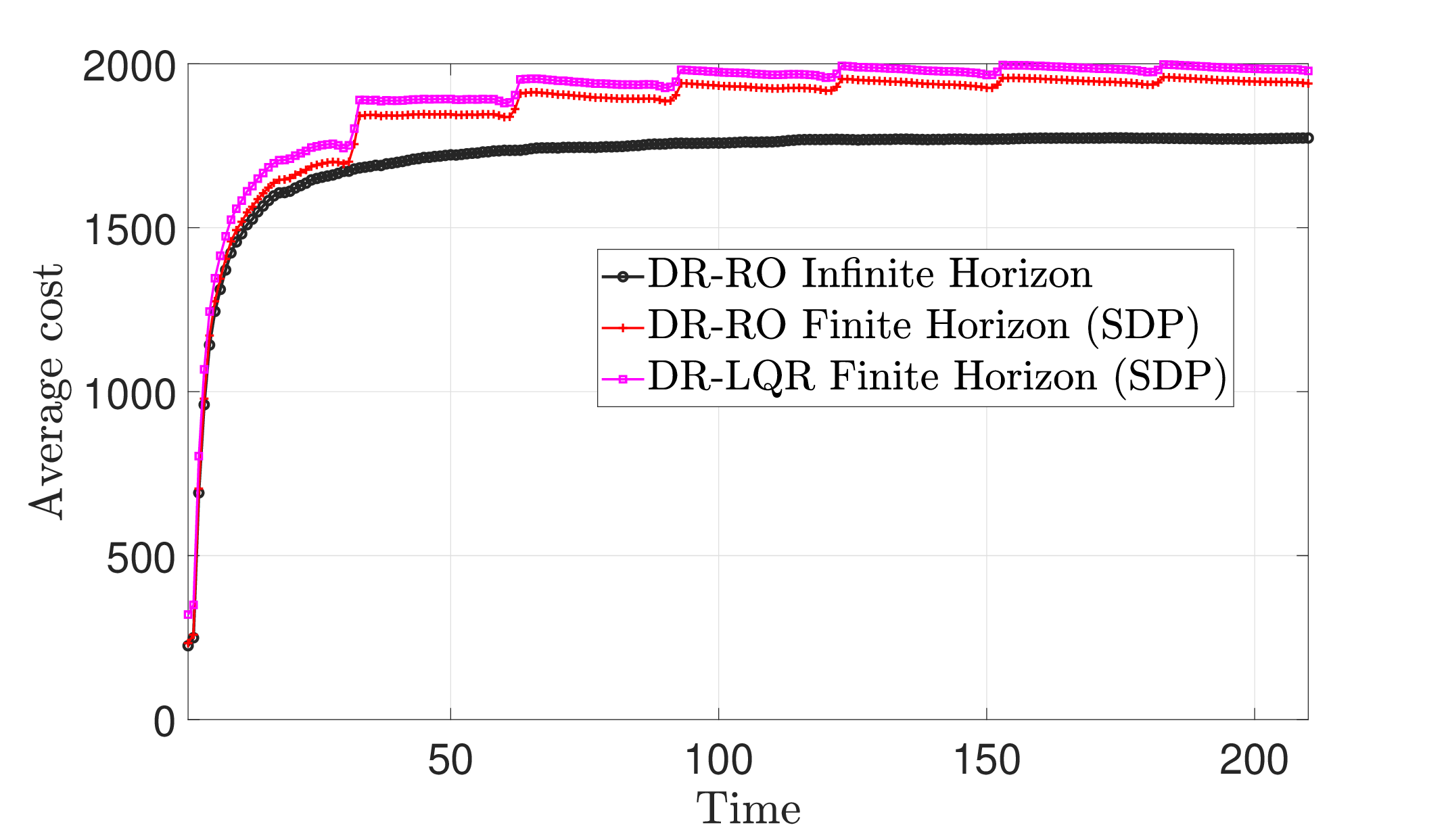}
        \caption{Worst disturbance for (I)}
        \label{fig:fig6}
    \end{subfigure}

    \vfill

    \begin{subfigure}[b]{0.48\textwidth}
        \centering
        \includegraphics[width=0.8\textwidth]{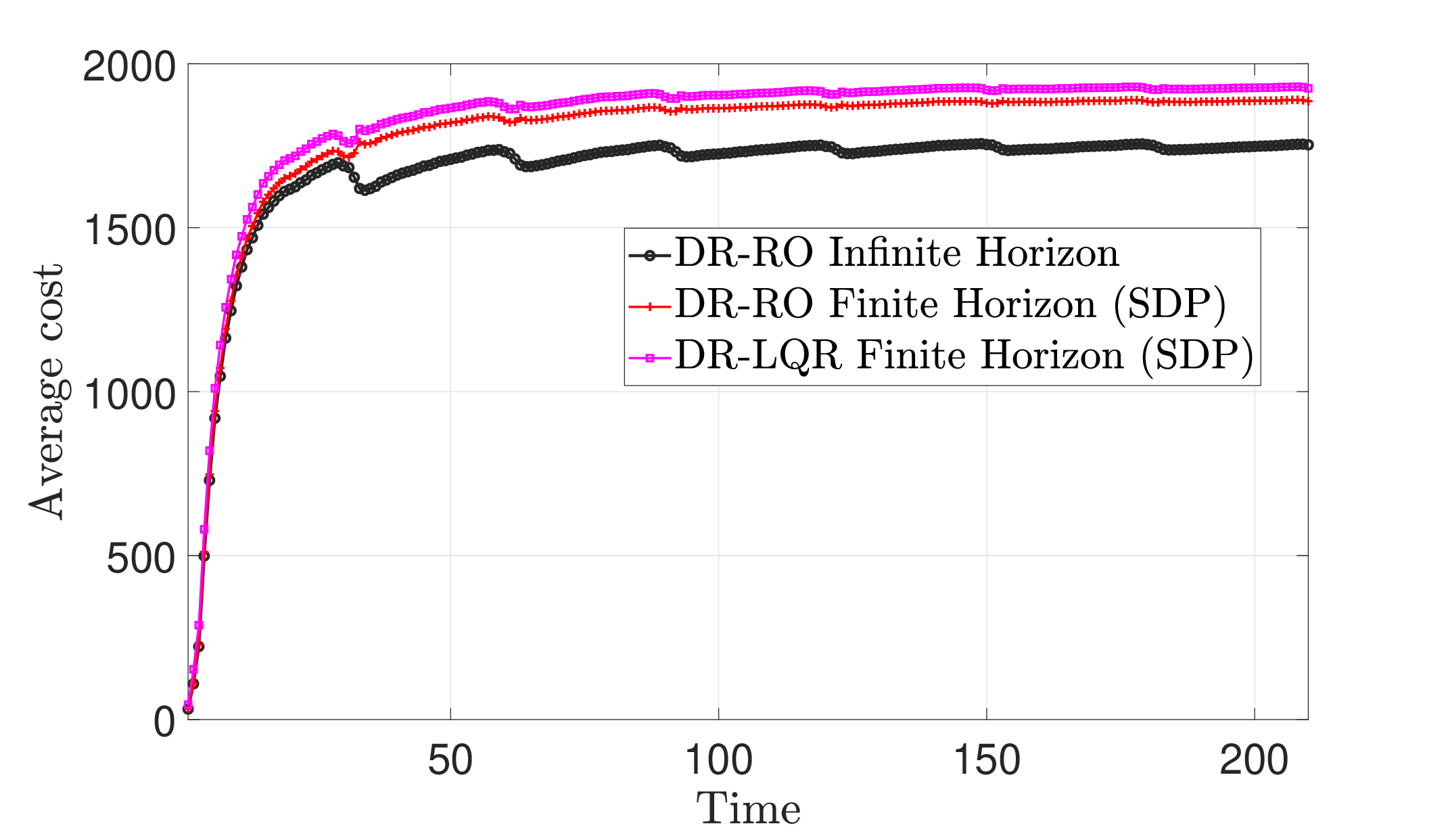}
        \caption{Worst disturbance for (II)}
        \label{fig:fig7}
    \end{subfigure}
    \hfill
    \begin{subfigure}[b]{0.48\textwidth}
        \centering
        \includegraphics[width=0.8\textwidth]{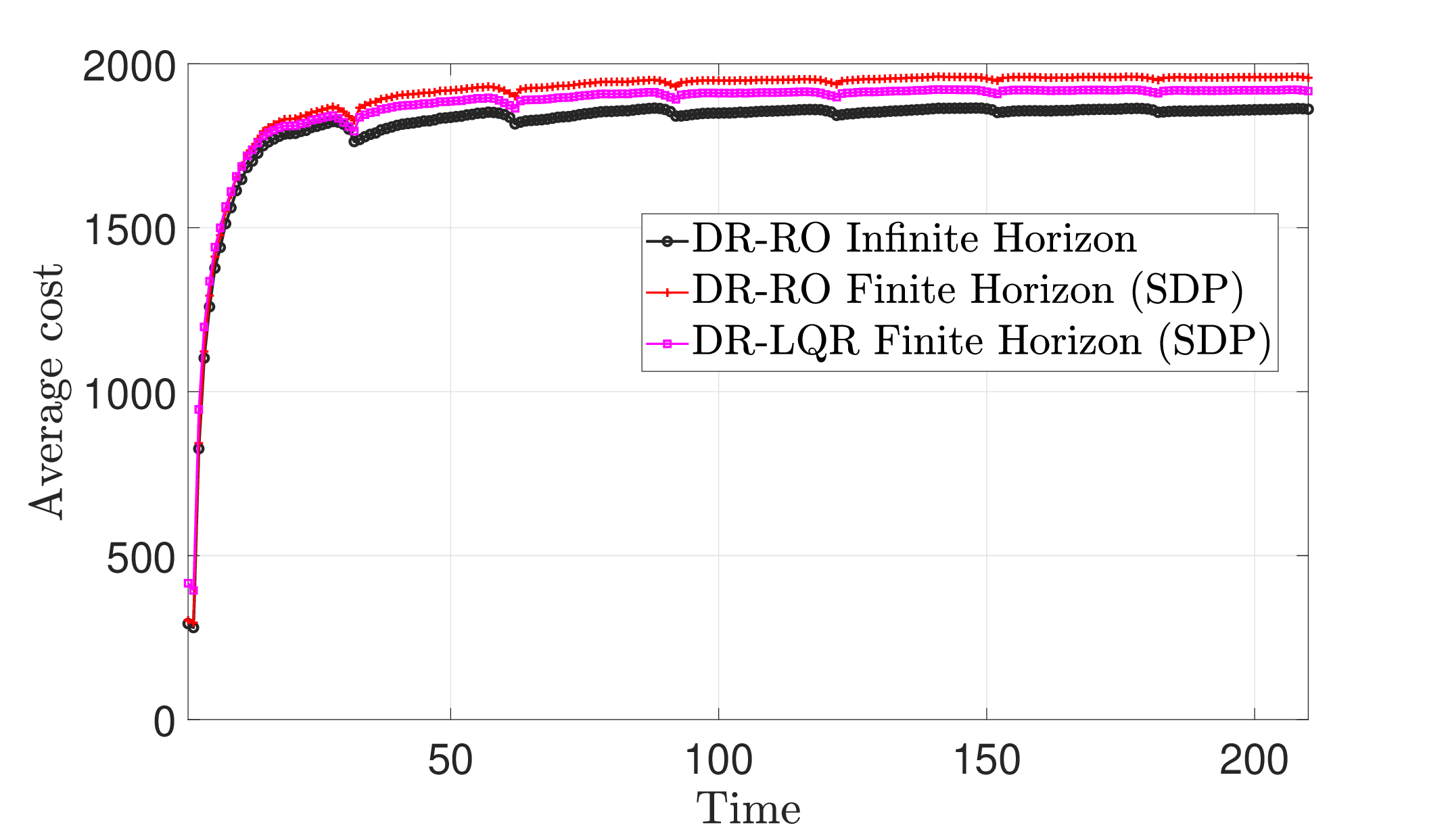}
        \caption{Worst disturbance for (III)}
        \label{fig:fig8}
    \end{subfigure}

    \caption{The control costs of the different DR controllers: (I) DR-RO in infinite horizon, (II) DR-RO in finite horizon and (III) DR-LQR in finite horizon under different disturbances for system [HE3] \cite{aircraft}. (a) is white noise, while (b), (c) and (d) are worst-case disturbances for each of the controllers, for $r=1.5$. The finite-horizon controllers are re-applied every $s=30$ steps. For all disturbances, the infinite horizon DRRO controller achieves lowest average cost, even in cases (c) and (d) where the finite horizon DR controllers are designed to minimize the cost.}
    \label{fig:time_domain3}
\end{figure*}

In figures \ref{fig:time_domain4} and \ref{fig:time_domain5}, we show the performance of the different DR controllers: (I) DR-RO in infinite horizon, (II) DR-RO in finite horizon and (III) DR-LQR in finite horizon under uniform noise and sinusoidal noise, respectively, for different systems. Note that the distributionally robust controller is guaranteed to perform better than other controllers under its own worst-case distribution, but has no guarantee of performance under other disturbances. Under uniform and sinusoidal noise, our infinite horizon DR-RO controller performs better than the finite horizon DR-LQR for systems [REA4] and [AC15], but worse than the finite horizon DR-LQR and on par with the  finite horizon DR-RO for system [HE3].
\begin{figure*}[htpb]
    \centering
    %\vspace{-0.2cm}
        \begin{subfigure}[b]{0.32\textwidth}
        \centering
        \includegraphics[width=\textwidth]{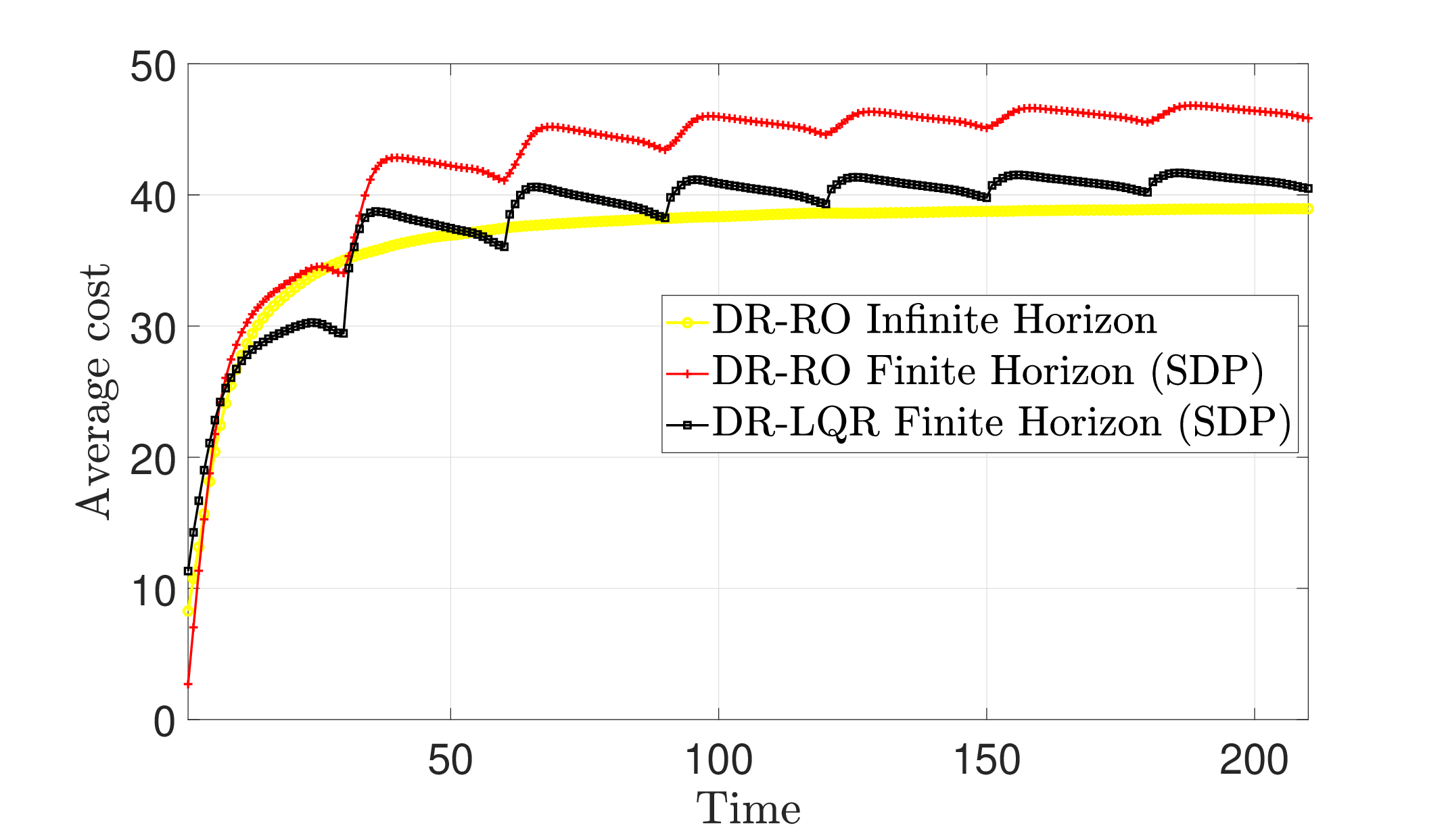}
        \caption{System [REA4]}
        \label{fig:figa2}
    \end{subfigure}
    \hfill
    \begin{subfigure}[b]{0.32\textwidth}
        \centering
        \includegraphics[width=\textwidth]{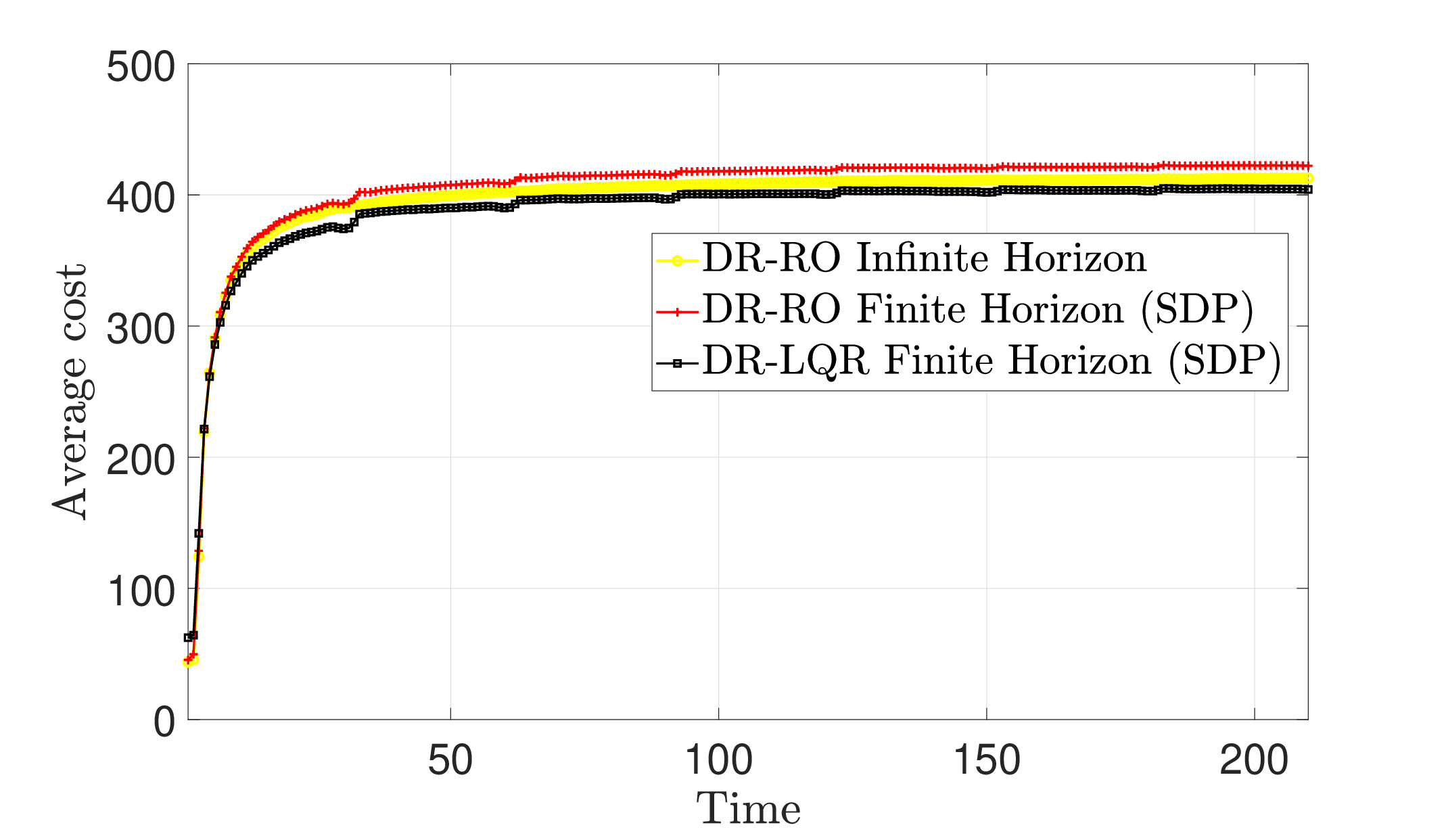}
        \caption{System [HE3]}
        \label{fig:figb2}
    \end{subfigure}
    \hfill
    \begin{subfigure}[b]{0.32\textwidth}
        \centering
        \includegraphics[width=\textwidth]{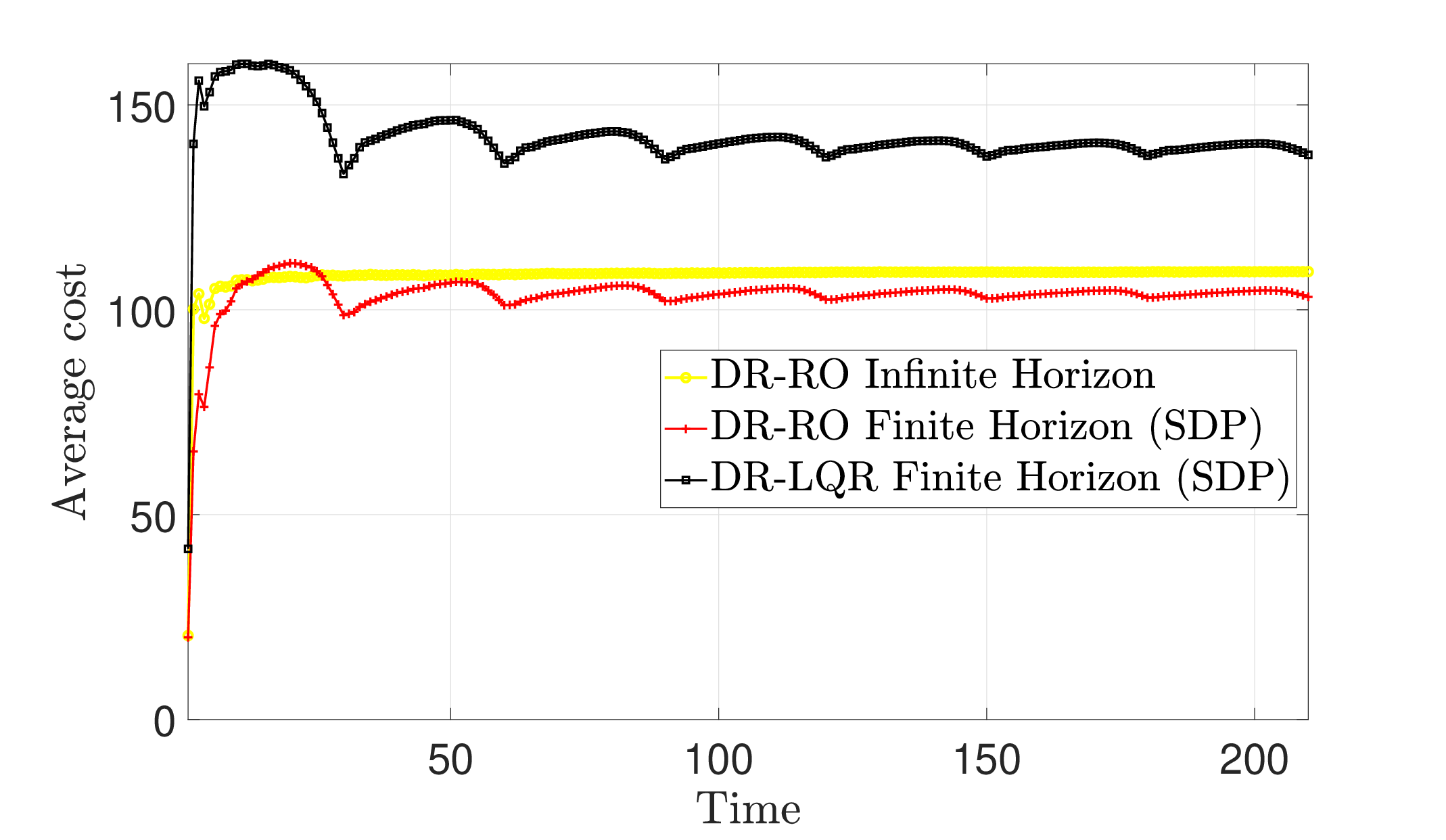}
        \caption{System [AC15]}
        \label{fig:figc2}
    \end{subfigure}
    %\vspace{-0.2cm}
    \caption{The control costs of the different DR controllers: DR-RO in infinite horizon, DR-RO in finite horizon and DR-LQR in finite horizon under uniform noise distributions (with amplitude=2) for different systems, for $r=1.5$. }
    \label{fig:time_domain4}
\end{figure*}

\begin{figure*}[htpb]
    \centering
    %\vspace{-0.2cm}
        \begin{subfigure}[b]{0.32\textwidth}
        \centering
        \includegraphics[width=\textwidth]{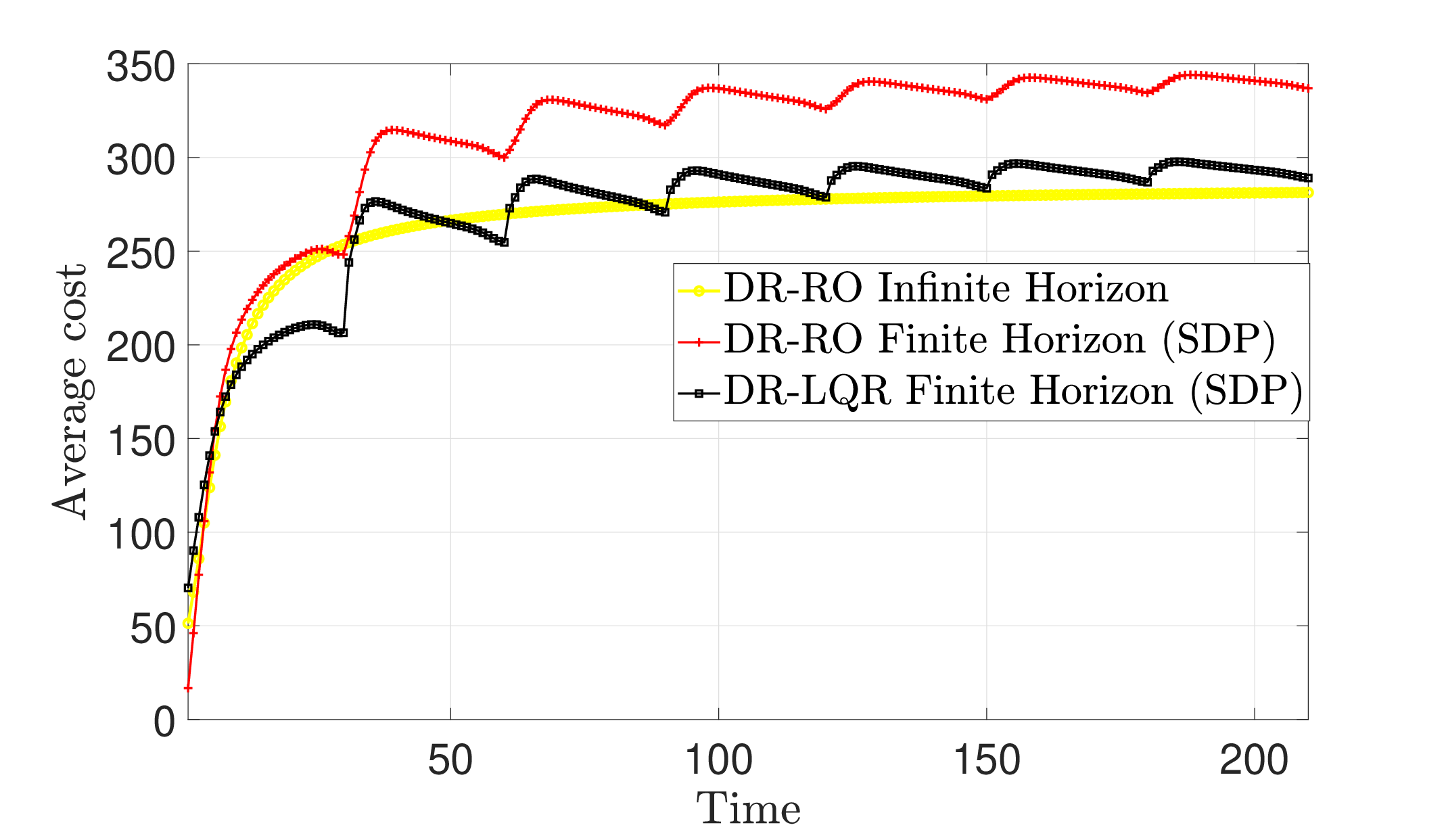}
        \caption{System [REA4]}
        \label{fig:figaa}
    \end{subfigure}
    \hfill
    \begin{subfigure}[b]{0.32\textwidth}
        \centering
        \includegraphics[width=\textwidth]{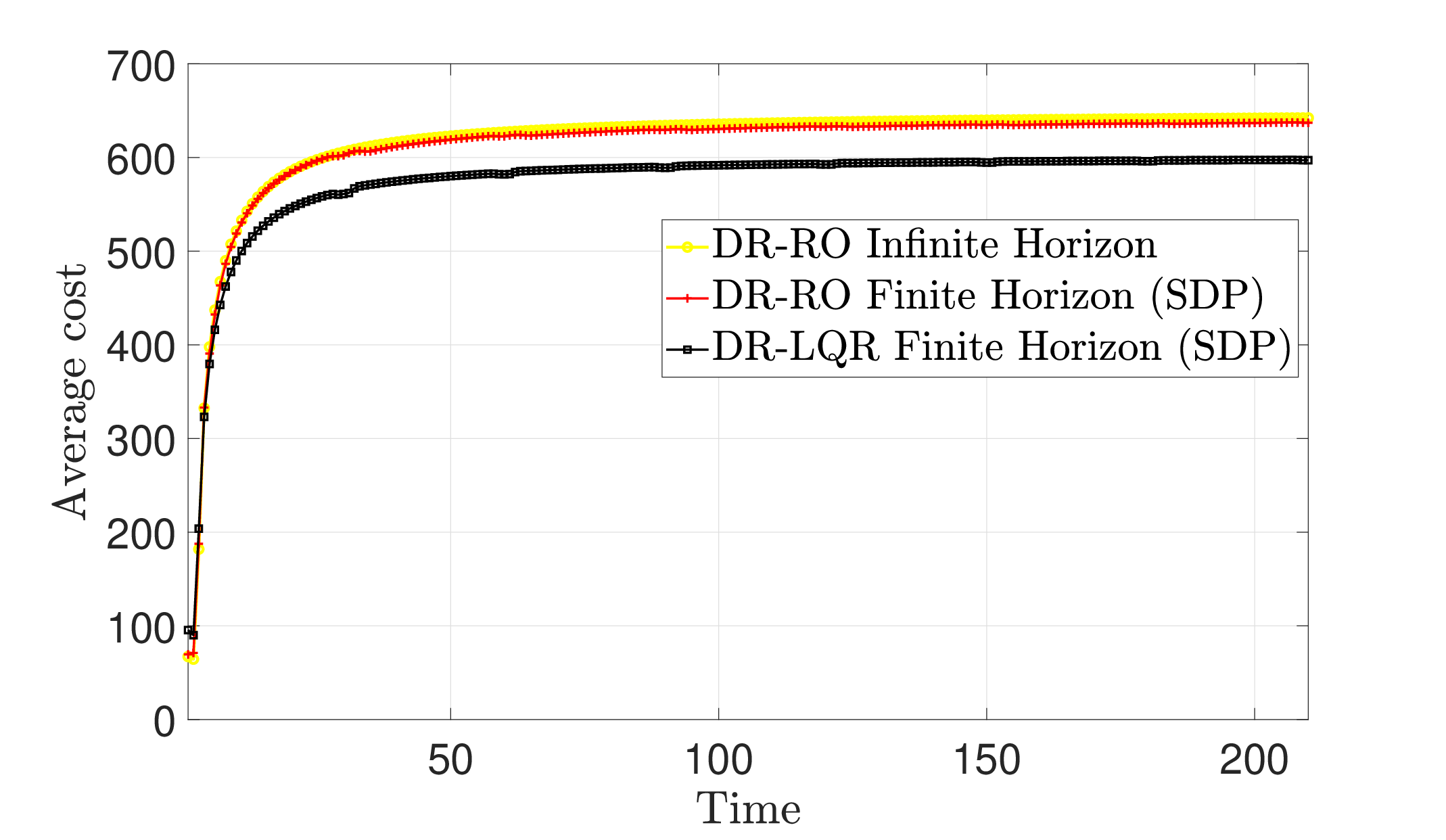}
        \caption{System [HE3]}
        \label{fig:figbb}
    \end{subfigure}
    \hfill
    \begin{subfigure}[b]{0.32\textwidth}
        \centering
        \includegraphics[width=\textwidth]{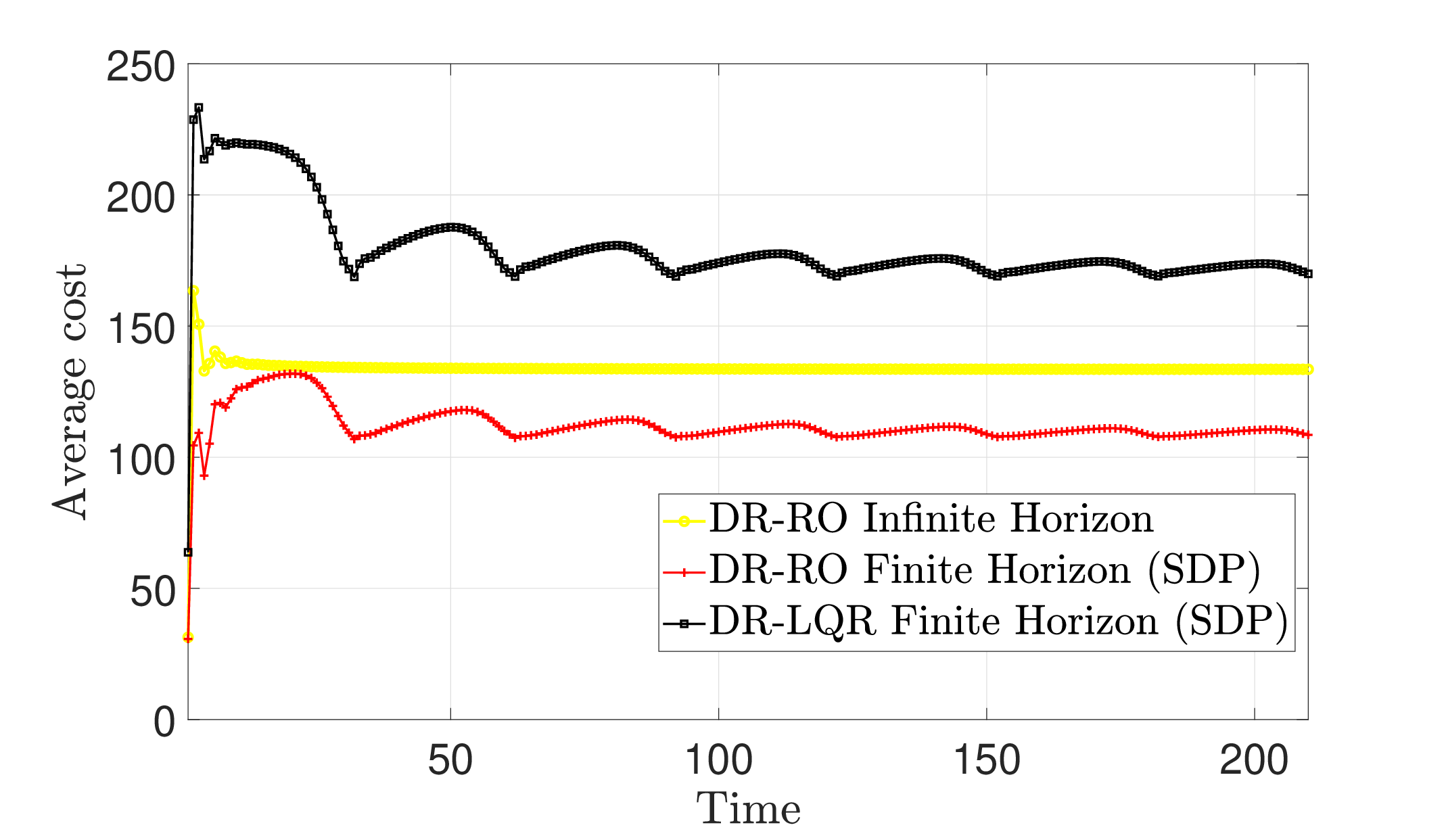}
        \caption{System [AC15]}
        \label{fig:figcc}
    \end{subfigure}
    %\vspace{-0.2cm}
    \caption{The control costs of the different DR controllers: DR-RO in infinite horizon, DR-RO in finite horizon and DR-LQR in finite horizon under sinusoidal noise distributions (frequency=1, phase=$\pi/4$, amplitude=2) for different systems, for $r=1.5$. }
    \label{fig:time_domain5}
\end{figure*}

\paragraph{Frequency domain simulations}
We show in figure \ref{fig:freqdomain3} the frequency domain representation of the square of the norm of the DR-RO controller and its approximation for [AC15] and [HE3], demonstrating that lower order approximations of $m(e^{j\omega})$ provide good estimates.
 
\begin{figure*}[ht]
    \centering
    \begin{subfigure}[b]{0.3\textwidth}
        \centering
        \includegraphics[width=\textwidth]{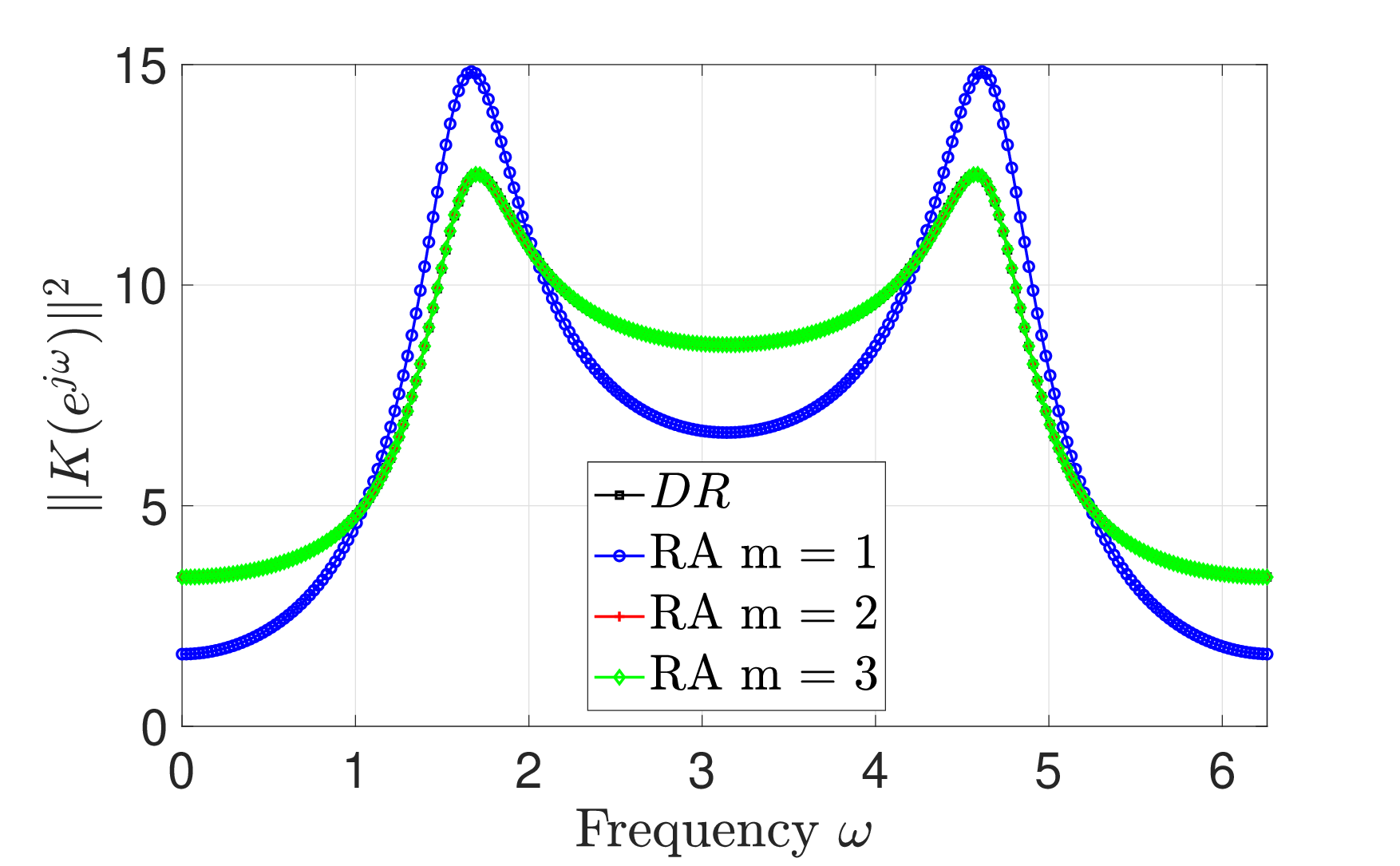}
        \caption{ }
        \label{fig:knorm_ac}
    \end{subfigure}
    \begin{subfigure}[b]{0.3\textwidth}
        \centering
        \includegraphics[width=\textwidth]{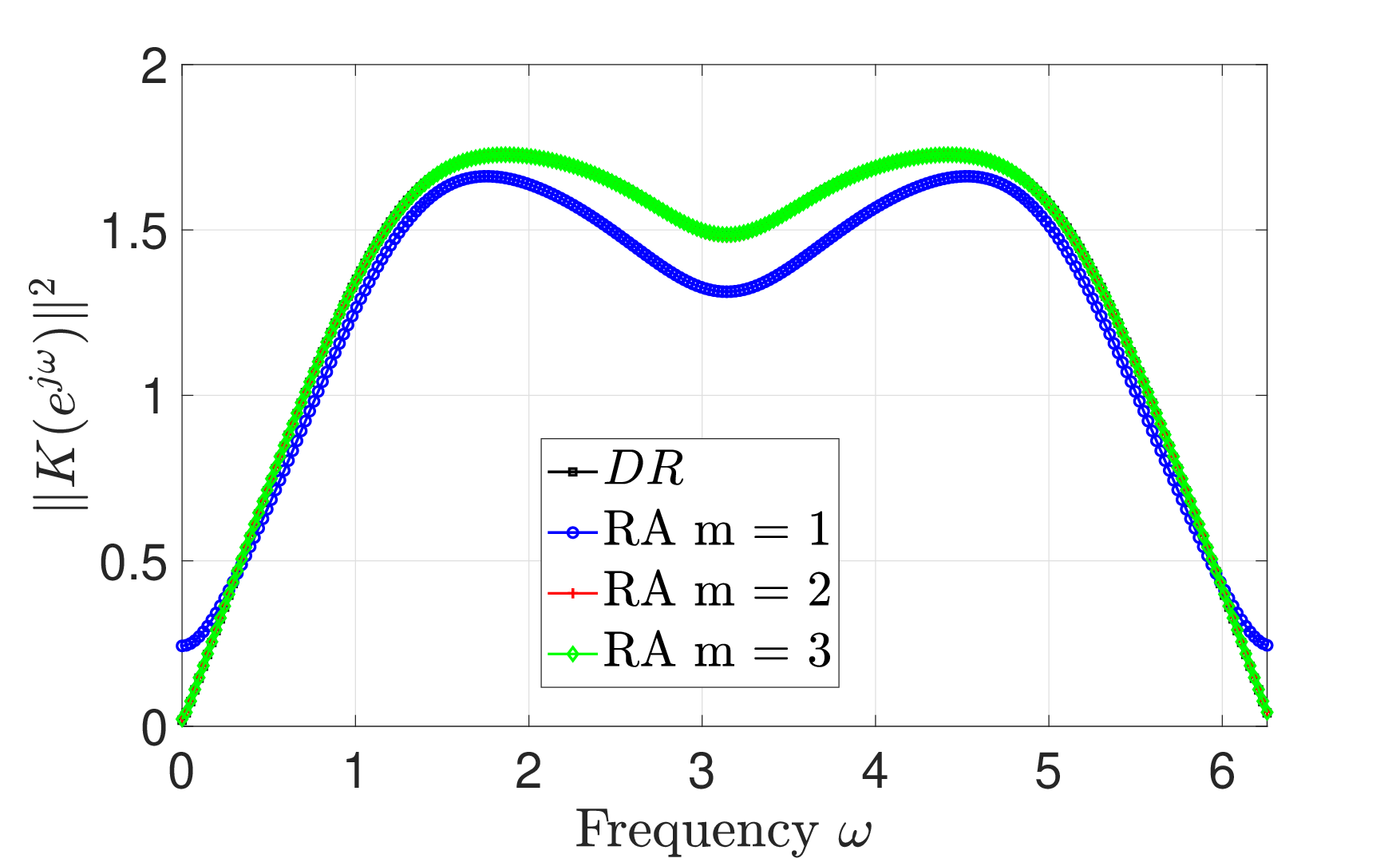}
        \caption{ }
        \label{fig:knowm_he}
    \end{subfigure}
    \begin{subfigure}[b]{0.3\textwidth}
        \centering
        \includegraphics[width=\textwidth]{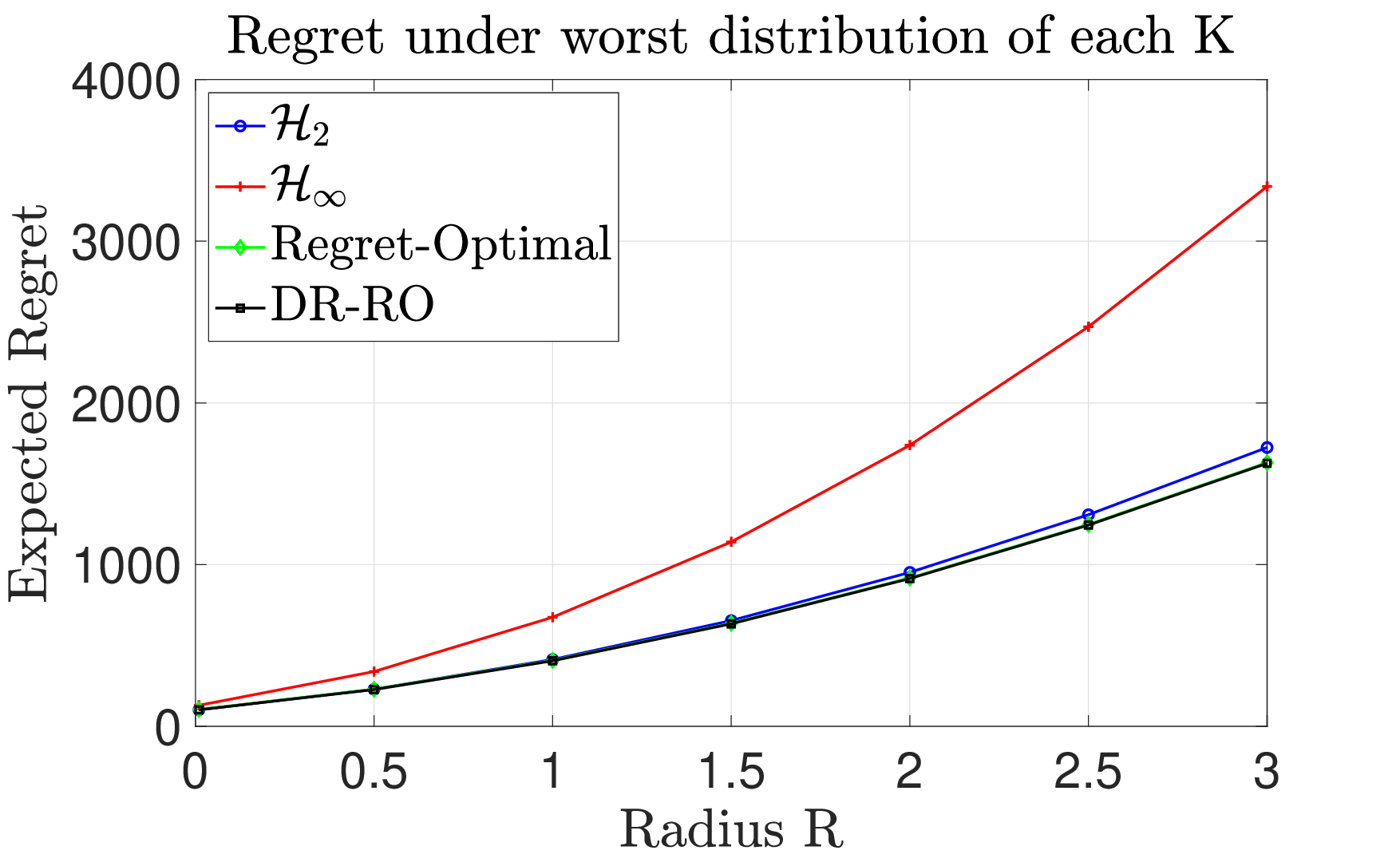}
        \caption{ }
        \label{fig:regret_he3}
    \end{subfigure}

    %\vspace{-0.2cm}
    \caption{The frequency domain representation of the square of the norm of the DR-RO controller $K(e^{jw})$ and its approximation for [AC15] \ref{fig:knorm_ac} and [HE3] \ref{fig:knowm_he}. Figures \ref{fig:knorm_ac} and \ref{fig:knowm_he} reaffirm our conclusions that lower order approximations of $m(e^{jw})$ still yield good estimates of the same. Figure \ref{fig:regret_he3} represents the worst case expected regret of $\mathcal{H}_2, \mathcal{H}_{\infty}$ and the RO controller.}
    \label{fig:freqdomain3}
\end{figure*}

% \section{Technical Theorems} \label{ap:tech}
% \input{appendix/appendix_7_technical}

%%%%%%%%%%%%%%%%%%%%%%%%%%%%%%%%%%%%%%%%%%%%%%%%%%%%%%%%%%%%%%%%%%%%%%%%%%%%%%%
%%%%%%%%%%%%%%%%%%%%%%%%%%%%%%%%%%%%%%%%%%%%%%%%%%%%%%%%%%%%%%%%%%%%%%%%%%%%%%%

\end{document}